\newtheorem{thm}{Theorem}[section]
\newtheorem{lemma}[thm]{Lemma}
\newtheorem{rem}[thm]{Remark}
\newtheorem{defn}[thm]{Definition}
\newtheorem{claim}[thm]{Claim}
\newtheorem{prop}[thm]{Proposition}
\newtheorem{cor}[thm]{Corollary}
\newcommand{\Q}{\mathbb Q}
\newcommand{\R}{\mathbb R}
\newcommand{\N}{\mathbb N}
\newcommand{\C}{\mathcal C}
\newcommand{\G}{\mathcal G}
\newcommand{\D}{\mathbb D}
\newcommand{\E}{\mathbb E}
\newcommand{\CR}{\mathsf{CR}}
\newcommand{\I}[1]{\mathbf{1}_{\left \{#1\right \}}}
\renewcommand{\P}{\mathbb P}
\newcommand{\Leb}{\mathsf{Leb}}
\newcommand{\eps}{\epsilon}
\newcommand{\dist}{\mathsf{dist}}
\newcommand{\radS}{\mathsf{Rad}}
\begin{document}
	\title{Gaussian multiplicative chaos through the lens of the 2D Gaussian free field}
	\author{Juhan Aru}
	\address {
		Institute of Mathematics,
		EPFL, CH-1015 Lausanne,
		Switzerland}
		\email
	{juhan.aru@math.epfl.ch}
	\begin{abstract}
	The aim of this review-style paper is to provide a concise, self-contained and unified presentation of the construction and main properties of Gaussian multiplicative chaos (GMC) measures for log-correlated fields in 2D in the subcritical regime. By considering the case of the 2D Gaussian free field, we review convergence, uniqueness and characterisations of the measures; revisit Kahane's convexity inequalities and existence and scaling of moments; discuss the measurability of the underlying field with respect to the GMC measure and present a KPZ relation for scaling exponents.
	\end{abstract}
	\maketitle

Gaussian multiplicative chaos (GMC) measures give a mathematical meaning to the exponential of rough Gaussian fields, that are not defined pointwise. As these fields themselves can be often seen as sums of independent Gaussians, their exponentials can be seen as products of independent Gaussians - hence also the name multiplicative chaos. Whereas the exponential of the 2D Gaussian free field had already been defined before in \cite{HK}, the article of Kahane ``Sur le chaos multiplicatif'' \cite{Kah} was the real starting point for a more thorough study of GMC measures. 

Over the past 10 years or so, there has been revived interest in GMC measures for log-correlated fields in 2D (e.g. the 2D continuum Gaussian free field) due to its connections with 2D Liouville quantum gravity (LQG) \cite{DS, DMS, DKRV}. More precisely, one of the first steps towards a mathematical description of LQG is to give a meaning to $\exp(\gamma \Gamma)$ where $\Gamma$ is a 2D Gaussian field with covariance of the form $\C_\Gamma(z,w) = -\log |z-w| + g(z,w)$ and $\gamma$ is a ``coupling'' parameter. It comes out that one can indeed give it a meaning in terms of random measures: one first approximates the underlying field by point-wise defined fields, then takes their exponentials and finally shows that suitably renormalized versions of these exponentials converge to a random measure.

There is already a comprehensive review of the results and applications of these GMC measures in general and in particular for log-correlated fields by Rhodes and Vargas \cite{RVrev}. Here, our aim and scope are slightly different: we would like to give a concise and self-contained treatment of the subcritical regime of GMC measures for log-correlated fields. We have tried to bring together in a unified, compact, and applicable manner various aspects of the theory, that have been previously presented in different articles with diverse notations, styles, or even languages. We hope that in this way we are able to better highlight that many proofs share similar underlying techniques and ideas. 

We would like to stress that there are no new results in this paper. Also, most of the core ideas in the article either stem from the existing literature, or (for those that are hard to track down in the existing literature) can be considered as folklore in the field. 

After the preliminaries the article is structured so that each section corresponds to roughly one single batch of conceptual ideas:
\begin{description}
\item [Section \ref{sec:basic}] discusses basic constructions of the GMC measures for log-correlated fields through approximations by pointwise defined fields and measures in the $L^2$ regime, i.e. in the regime where the second moment of the GMC mass of the domain exists. The key components are simple second moment calculations and convergence theorems for regular martingales. Via the same methods, we also discuss why all the different constructions give the same measure and show how one can simultaneously construct GMC measures over a whole interval of the ``coupling'' parameter $\gamma$.
\item [Section \ref{sec:RM}] studies the rooted GMC measure: i.e. the measure on pairs (GMC measure, random point from the GMC measure). Rooted measures are analogous to looking at the spine particle in branching random walks, and turn out to be very helpful: 
\begin{itemize}
	\item they provide the natural setting for describing how the measure / underlying field looks around a ``typical'' point; \item give a way to characterise the GMC measure; 
	\item and help extend second moment arguments from the $L^2$ to the $L^1$ regime by observing that especially rough parts of the measure cause the blow-up of the second moment of the mass, yet contribute so little to the measure that they can be safely removed.
\end{itemize}
\item [Section \ref{sec:intr}] looks in more detail at the way of characterizing the GMC measure directly as an exponential of the underlying Gaussian field. This viewpoint is not only intrinsically appealing, but also helps to prove the uniqueness of the measure for even a larger family of approximations. We also discuss another constructions of GMC measures in the $L^1$ regime by defining GMC measures on top of GMC measures.
\item [Section \ref{sec:scaling}] uses the Markov decomposition of the GFF to describe the measure of a ball of radius $r$: the measure of a ball of radius $r$ is roughly given by a scaled copy of the measure, times $\exp(\gamma X_r- \frac{\gamma^2}{2}\E [X_r^2])$ where $X_r$ is an independent Gaussian shift of variance $-\log r$, stemming from the $r$-circle average process of the underlying field. From this expression deduce scaling relations for the GMC mass of small balls, exhibiting a ``multifractal'' behaviour and prove the existence of all negative moments of the measure.
\item [Section \ref{sec:meas}] builds on the heuristic from the last section and makes it more precise: we will see that the field given by the logarithm of the GMC mass of $r-$balls around each point is up to an additive shift in the sense of distributions very close to the $r$-circle-average field of the GFF. As this approximation becomes precise in the $r \to 0$ limit, we obtain that the underlying Gaussian field is measurable w.r.t. the GMC measure of a fixed parameter.
\item [Section \ref{sec:Kah}] contains the Kahane's convexity inequality that corresponds vaguely to Slepian inequalities and gives a way to compare two GMC measures by comparing the covariance kernels of the underlying fields. We then use this inequality to derive the existence of positive moments of the measure by using scaling relations and near-independence of distance parts. 
\item [Section \ref{sec:KPZ}] finally discusses the KPZ relation which describes the size of fractals seen from the point of view of the GMC measure. We have chosen the version of the KPZ relation introduced in \cite{DS}. In this version the methods of other chapters come nicely together: we need to study in even greater detail how the measure at small scales corresponds to the behaviour of the exponential of the underlying field.
\end{description}
To keep things unified and friendly, we have opted to present everything in the case where the log-correlated field is precisely the 2D Gaussian free field. This is the case which has been of most interest in the recent years due to the link to the probabilistic framework of Liouville quantum gravity, and it presents some advantages due to its Markovian structure and exact calculations. Most of the main results generalize easily to all (sufficiently nice) log-correlated fields in 2D via Kahane's convexity inequalities. The generalization may not be as direct only in the cases of the measurability of the underlying field for a fixed parameter in Section \ref{sec:meas} and the exact version of the KPZ relation presented in Section \ref{sec:KPZ}.

We do not touch upon the applications of the GMC measures, and in particular its applications in the probabilistic theory of Liouville quantum gravity. The interested reader will get the first feeling from either the lecture notes \cite{Berln, RVln}, or the Bourbaki seminar notes on the KPZ relation \cite{Gar}. We do not also touch upon the strong links in results and techniques between Gaussian multiplicative chaos and multiplicative cascades, but would like to draw the attention to \cite{APS}, where it is explained how the GMC measure of the GFF can be exactly seen as a certain multiplicative cascade.
 
Finally, in the spirit of self-containedness and for the sake of readability we have opted to leave out references in the main text. The links to the original literature are brought out separately at the end of each section. The list of references here is by no means exhaustive, we just try to pin down the origin of the specific ideas or arguments used. For a wider and more thorough list of references see e.g. \cite{RVrev, DMS}.

\subsection*{Acknowledgements}
This short note has its roots in a reading group on GMC at ETH Z\"urich and University of Z\"urich that took place in Zurich in fall 2016 and which I had the pleasure to co-organize. I am indebted to all participants of the reading group, and in particular I would like to thank A. Sep\'{u}lveda for several discussions on GMC theory. The article was further motivated by the conference Inhomogeneous Random Systems (IRS) 2017 in Paris: I would like to thank the organizers (F. Dunlop, T. Gobron, E. Saada) and the moderator B. Duplantier for the possibility to present a talk and the encouragement to write this note. Finally, I am thankful to E. Powell, F. Viklund and the anonymous referee for very helpful comments on the draft, M. Forde for his questions and comments, and to W. Werner for some good advice. 



%

\section*{A few preliminaries}

Let us first present a few general preliminaries. For simplicity we will work with bounded, simply-connected domains $D$, although neither of those is an actual restriction and everything generalizes, if properly stated, to unbounded and at least finitely-connected case. 

\subsection{The 2D continuum Gaussian free field}\label{sec:prelim}

The zero boundary 2D continuum Gaussian free field (GFF), denoted by $\Gamma^D$, is the Gaussian process on $D$ whose covariance $\C_{\Gamma^D}(z,w)$ is given by the zero boundary Green's kernel $$\G_D(z,w) = -\log |z-w| + g(z,w).$$
Here $g(z,\cdot)$ is the harmonic extension of $\log |z-\cdot|$ on the boundary. Notice that in the context of Gaussian multiplicative chaos the normalization (i.e. the factor in front of the $\log$) is chosen for convenience to be $1$, differing from a part of the GFF/SLE literature where it is natural to set it equal to $(2\pi)^{-1}$. 

As the Green's function explodes on the diagonal, the GFF does not make direct sense as a pointwise defined Gaussian field, but it can be seen as a random Schwartz distribution. This Gaussian random distribution is characterized by saying that 
\begin{itemize}
\item For any compactly supported test function $f$ on $D$, the random variable $(\Gamma^D,f)$ is a centered Gaussian of variance
 $$ \E [(\Gamma,f) (\Gamma,f)]  =  \iint_{D\times D} f(x) \G_D(x,y) f(y) d x d y. $$  
\end{itemize}

Its existence follows for example from a direct construction:
Let $(\phi_i)_{i\geq 1}$ be the eigenfunctions of the Dirichlet Laplacian on $D$ normalized such that their Dirichlet norm $\| \phi_i \|_\nabla := \| (2\pi)^{-1}\nabla \phi_i \|_2 = 1$. Then if $X_i$ are i.i.d. standard Gaussians, the random sums $\Gamma^n = \sum_{i=1}^n X_i \phi_i$ converge in law to a random distribution $\Gamma^D$ satisfying the condition above. Convergence here takes place for example in the space of distributions, and more precisely in the Sobolev space $\mathcal{H}^{-\eps}(D)$ for any $\eps > 0$. For us this best possible analytic space does not matter too much and we refer to for example to introductory texts \cite{Gar,Berln} or \cite{She} for a more thorough discussion on the GFF. What does matter is the fact that the Cameron-Martin space of $\Gamma$ is $\mathcal{H}_0^{1}$: the closure of smooth compact functions on $D$ w.r.t the norm $\|\phi\|_\nabla$. 

We are often interested in departing from the GFF itself and then approximating it in a measurable way. Here, for the sake of circle-average and mollifier approximations it is helpful to extend the GFF by $0$ outside of $D$.

\begin{thm}[GFF approximations]\label{thm:GFFapp}
The GFF $\Gamma^D$ is the limit of the following continuous Gaussian fields:
\begin{enumerate}
\item (\textit{Orthogonal decomposition}) Let $(\phi_i)_{i\geq 1}$ be the eigenfunctions of the Dirichlet Laplacian on $D$ normalized such that $\| (2\pi)^{-1}\nabla \phi_i \|_2 = 1$. Then $X_i = (\Gamma^D, \phi_i)_\nabla$ are i.i.d standard Gaussians and moreover $\Gamma^n := \sum_{i=1}^n X_i \phi_i$ converge to $\Gamma^D$ almost surely in the space of distributions.
\item (\textit{Circle average approximation}) Let $\rho_\eps^z$ be the unit uniform measure on the circle of radius $\eps$ around $z$. Then $\Gamma_\eps(z) := (\Gamma^D, \rho_\eps^z)$ converges in probability to $\Gamma^D$ in the space of distributions.
\item (\textit{Smooth mollifiers})For any mollifier $\rho_\eps^z(w) = \rho(\frac{w - z}{\eps})$ where $\rho$ is some radially symmetric positive smooth function of unit mass and support in $\D$, we can define $\Gamma_{\eps,\rho}(z) := (\Gamma^D, \rho_\eps^z)$. Then $\Gamma_{\eps,\rho}(z)$ converges in probability to $\Gamma^D$ in the space of distributions.
\end{enumerate}
\end{thm}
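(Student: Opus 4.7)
The plan is to take the orthogonal decomposition (1) as a \emph{construction} of $\Gamma^D$ and then deduce (2) and (3) as corollaries about test functions with slightly weaker regularity than smooth compactly supported ones. For (1), I start with i.i.d.\ standard Gaussians $X_i$ and set $\Gamma^n := \sum_{i=1}^n X_i \phi_i$. The partial sums are smooth functions and, viewed in the Hilbert space $\mathcal{H}^{-\eps}(D)$ for some $\eps > 0$, form a martingale with
\[
\E\bigl[\|\Gamma^n - \Gamma^m\|_{\mathcal{H}^{-\eps}}^2\bigr] \;=\; \sum_{i=m+1}^n \|\phi_i\|_{\mathcal{H}^{-\eps}}^2.
\]
Since $\phi_i$ is (up to a constant) $\lambda_i^{-1/2}$ times an $L^2$-normalized Dirichlet eigenfunction, Weyl's law ($\lambda_i \asymp i$ in 2D) together with standard estimates on eigenfunctions in negative Sobolev norms makes this sum convergent for every $\eps > 0$. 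Thus $(\Gamma^n)$ is $L^2$-bounded in $\mathcal{H}^{-\eps}(D)$ and, being a Hilbert-space-valued martingale, converges almost surely to a limit $\Gamma^D$. A direct covariance computation against a smooth test $f$ yields $\E[(\Gamma^n, f)^2] = \sum_{i \le n} (f,\phi_i)_{L^2}^2$, which in the limit reproduces $\iint f(x)\G_D(x,y)f(y)\,dx\,dy$ via the spectral expansion $\G_D(z,w) = \sum_i \phi_i(z)\phi_i(w)$ (valid with the present Dirichlet normalization).

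For (2) and (3), the key observation is that both $\rho_\eps^z$ (extended by zero outside $D$) belong to $\mathcal{H}^{-\eps'}(D)$ uniformly in $\eps$ and in $z$ on compact subsets of $D$: this is immediate for smooth mollifiers, and classical for the uniform measure on a circle. Hence $(\Gamma^D, \rho_\eps^z)$ is well-defined as a Gaussian random variable, and the explicit covariance combined with a Kolmogorov continuity argument makes $z \mapsto \Gamma_\eps(z)$ a continuous Gaussian field. To show convergence in probability as distributions, I test against a fixed compactly supported smooth $f$: by Fubini for Gaussian pairings,
\[
(\Gamma_\eps, f) \;=\; \int f(z)(\Gamma^D, \rho_\eps^z)\,dz \;=\; (\Gamma^D, \rho_\eps * f),
\]
where $\rho_\eps * f$ (resp.\ its circle-average analogue) converges to $f$ in $\mathcal{H}_0^1(D)$. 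Since $\E[(\Gamma^D, h)^2]$ is bounded by the square of the Dirichlet-type norm of $h$ (indeed $\mathcal{H}_0^1$ is the Cameron-Martin space of $\Gamma^D$), the Gaussian $(\Gamma^D, f - \rho_\eps * f)$ has variance going to $0$, giving $L^2$-convergence and hence convergence in probability in the space of distributions.

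The main obstacle I anticipate is technical rather than conceptual: for $z$ near $\partial D$ the support of $\rho_\eps^z$ leaks out of $D$, so one must either interpret $\Gamma^D$ as identically zero outside $D$ (extending the pairing $(\Gamma^D, \rho_\eps^z)$ accordingly) or restrict initially to $z$ with $\dist(z,\partial D) > \eps$ and check that the tail contributions vanish as $\eps \to 0$. A secondary technical point is bounding $\|\phi_i\|_{\mathcal{H}^{-\eps}}$ to make the martingale argument precise; this relies on the $L^\infty$ bound $\|\phi_i\|_\infty \lesssim \lambda_i^{1/2}$ together with Weyl's law, and is standard but needs care. Once these points are handled, all three convergence statements follow from the same underlying $L^2$-estimate for Gaussian pairings.
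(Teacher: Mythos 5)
Your proposal is correct and follows essentially the same route as the paper's sketch: finite-dimensional $L^2$ convergence of the pairings $(\Gamma_\eps,f)$ --- via the Hilbert-space martingale property for the orthogonal series, and via the adjoint identity $(\Gamma_\eps,f)=(\Gamma^D,\rho_\eps * f)$ with $\rho_\eps * f\to f$ for the circle-average and mollifier cases --- combined with control in a negative Sobolev space. One small correction: the circle measure $\rho_\eps^z$ does \emph{not} lie in $\mathcal{H}^{-\eps'}(D)$ uniformly in $\eps$ (a measure supported on a curve belongs to $\mathcal{H}^{s}$ only for $s<-1/2$, and even its $\mathcal{H}^{-1}$ norm grows like $\sqrt{-\log\eps}$ as $\eps\to 0$); what your argument actually needs, and what is true, is that $\rho_\eps^z$ has finite Green energy $\iint \G_D\, d\rho_\eps^z\, d\rho_\eps^z<\infty$ for each fixed $\eps$, so that the pairing is a well-defined Gaussian, while the convergence itself comes only from $\mathrm{Var}\bigl((\Gamma^D, f-\rho_\eps * f)\bigr)\to 0$ for each fixed smooth $f$.
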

These convergences can be for example easily verified by checking:
\begin{itemize}
\item Finite dimensional convergence, i.e. we check that for any smooth $f$,  converges to a centred Gaussian of variance $\iint_{D\times D} f(x) \G_D(x,y) f(y) d x d y;$. In the case of the orthogonal sequence, this follows from the fact that $(\Gamma^n,f)$ is a $L^2$ martingale w.r.t the filtration generated by $\Gamma^n$; in the other cases, one can for example show that $(\Gamma_\eps,f)$ is Cauchy in $L^2$.
\item Tightness in the Sobolev space $\mathcal{H}^{-1-\eps}(D)$: the squared $\mathcal{H}^{-1}(D)$ norm of $\Gamma_\eps$ is equal to $\int_{D \times D} \Gamma_\eps(z) \G_D(z,w) \Gamma_\eps(w)dzdw$ and its expectation which is bounded by $\int_{D \times D}(\log |z-w|)^2dzdw < \infty$.
\end{itemize}
As each approximate field $\Gamma^n$ in the orthogonal decomposition is a finite-dimensional Gaussian, the orthogonal approximation gives a way to extend the classical finite-dimensional Gaussian shifting to the case of the GFF:

\begin{thm}[Cameron-Martin shift for the GFF]
	Let $F$ be a function belonging to $\mathcal H^1_0(D)$ and $\Gamma$ a GFF on $D$. Denote the law of $\Gamma$ by $\P$ and the law of $\Gamma+F$ by $\tilde \P$. 
	Then $\P$ and $\tilde \P$ are mutually absolutely continuous and the Radon-Nikodym derivative $d\tilde \P / d \P$ at $\Gamma$ is given by $\exp((F, \Gamma)_\nabla - \frac{1}{2}\| F \|^2_\nabla)$.
\end{thm}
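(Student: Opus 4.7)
The plan is to bootstrap the classical finite-dimensional Cameron--Martin formula through the orthogonal eigenfunction approximation $\Gamma^n = \sum_{i=1}^n X_i \phi_i$ from Theorem \ref{thm:GFFapp}(1). Since $F \in \mathcal{H}_0^1(D)$, we can expand it in the same orthonormal basis as $F = \sum_{i \geq 1} a_i \phi_i$ with $a_i = (F, \phi_i)_\nabla$ and $\sum a_i^2 = \|F\|_\nabla^2 < \infty$. Setting $F^n = \sum_{i=1}^n a_i \phi_i$, the shifted truncation $\Gamma^n + F^n$ equals $\sum_{i=1}^n (X_i + a_i)\phi_i$, so at the level of the i.i.d. standard Gaussians $(X_i)_{i \leq n}$ the shift is just the classical translation $X_i \mapsto X_i + a_i$, for which the Radon--Nikodym derivative is the exponential martingale
\[
M_n := \exp\!\Bigl(\sum_{i=1}^n a_i X_i - \tfrac{1}{2}\sum_{i=1}^n a_i^2\Bigr).
\]

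The first main step is to identify the limit. Since $\sum a_i^2 < \infty$, the sum $\sum_{i=1}^n a_i X_i$ converges almost surely and in $L^2$; the limit is by definition $(F, \Gamma)_\nabla$, the pairing of $F \in \mathcal{H}_0^1$ with the GFF, which is a centred Gaussian of variance $\|F\|_\nabla^2$. Hence $M_n \to M_\infty := \exp((F, \Gamma)_\nabla - \tfrac{1}{2}\|F\|_\nabla^2)$ almost surely. The second step is to transfer the finite-dimensional identity $\E[M_n \cdot G(\Gamma^n)] = \E[G(\Gamma^n + F^n)]$, valid for any bounded measurable functional $G$ on the space of distributions, to the limit. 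Taking $G$ bounded and continuous on $\mathcal{H}^{-1-\eps}(D)$, the right-hand side converges to $\E[G(\Gamma + F)] = \tilde\E[G(\Gamma)]$ using $\Gamma^n + F^n \to \Gamma + F$ a.s. in that space, while the left-hand side should converge to $\E[M_\infty G(\Gamma)]$.

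The main technical point is the passage to the limit on the left, for which one needs uniform integrability of $(M_n G(\Gamma^n))_n$. This follows from the fact that $M_n$ is bounded in $L^p$ for every $p \geq 1$: indeed, $\E[M_n^p] = \exp(\tfrac{p(p-1)}{2}\sum_{i=1}^n a_i^2) \leq \exp(\tfrac{p(p-1)}{2}\|F\|_\nabla^2)$, so the family $(M_n)$ is in particular uniformly integrable, and combined with boundedness of $G$ one obtains the desired convergence. This identifies $M_\infty$ as a probability density with $\E[M_\infty] = 1$ and shows $\tilde\E[G(\Gamma)] = \E[M_\infty G(\Gamma)]$ for a separating class of test functionals, which yields $d\tilde\P/d\P = M_\infty$. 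Strict positivity of $M_\infty$ then gives mutual absolute continuity.

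The hard part is less the finite-dimensional shift (which is entirely standard) than the care needed in defining $(F,\Gamma)_\nabla$ intrinsically and in checking that the convergence $\Gamma^n \to \Gamma$ and the convergence of the densities $M_n \to M_\infty$ are compatible on a sufficiently rich $\sigma$-algebra; the $L^p$-boundedness of the exponential martingale is exactly the tool that packages these two limits together.
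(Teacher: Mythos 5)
Your proof is correct and is precisely the argument the paper has in mind: it states the theorem without proof, remarking only that the orthogonal decomposition "gives a way to extend the classical finite-dimensional Gaussian shifting," which is exactly the finite-dimensional Cameron--Martin shift plus $L^p$-bounded martingale limit that you carry out. (Note your density $\exp((F,\Gamma)_\nabla - \tfrac12\|F\|_\nabla^2)$ is the correct normalisation; the paper's statement omits the square on $\|F\|_\nabla$, evidently a typo.)
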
  
There are two key properties of the GFF that we make heavy use of:

\begin{thm}[GFF properties]\label{prop:GFFprp}
The 2D GFF satisfies the following nice properties:
\begin{enumerate}
\item (\textit{Conformal invariance}) If $\phi: D \rightarrow D'$ is a conformal map, then $\phi(\Gamma^D)$ has the law of the GFF in $D'$ (here $\phi(\Gamma^D)$ can again be given meaning via an approximation of $\Gamma^D$).
\item (\textit{Markov property}) If $B$ is any deterministic open subset of $D$, then the GFF can be written as an independent sum $\Gamma = \Gamma^B + \Gamma_B$, where $\Gamma^B$ has the law of a zero boundary GFF inside $B$ and is extended by $0$ outside, and $\Gamma_B$ is a random distribution that is harmonic inside $B$. We denote this harmonic part in $B$, that is defined pointwise and is given by the harmonic extension of the field values on $\partial B$, by $h_B$. 
\end{enumerate}
\end{thm}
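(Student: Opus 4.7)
My overall strategy is to exploit the characterization of $\Gamma^D$ as the centered Gaussian random distribution with covariance $\G_D$: once the pushforward under $\phi$ (for part (1)) or the decomposition $\Gamma = \Gamma^B + \Gamma_B$ (for part (2)) is constructed, the statement reduces to checking covariances and independence. The Cameron-Martin structure and the orthogonal expansion in Theorem \ref{thm:GFFapp}(1) will be the key tools.

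For conformal invariance, I would define $\phi(\Gamma^D)$ on $D'$ by $(\phi(\Gamma^D), g) := (\Gamma^D, (g\circ \phi)\,|\phi'|^2)$ for compactly supported smooth $g$ on $D'$, which is well-defined since $(g\circ\phi)|\phi'|^2$ is a valid test function on $D$. The covariance of $(\phi(\Gamma^D), g_1)$ and $(\phi(\Gamma^D), g_2)$ is
\[
\iint_{D\times D} (g_1\circ\phi)(z)\,|\phi'(z)|^2\,\G_D(z,w)\,(g_2\circ\phi)(w)\,|\phi'(w)|^2\,dz\,dw,
\]
and after substituting $z' = \phi(z)$, $w' = \phi(w)$ the Jacobian $|\phi'|^{-2}|\phi'|^{-2}$ exactly cancels the $|\phi'|^2|\phi'|^2$ factors carried by the test functions. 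The remaining integral features $\G_D(\phi^{-1}(z'),\phi^{-1}(w'))$, which equals $\G_{D'}(z',w')$ by the standard conformal invariance of the Dirichlet Green's function, matching the covariance of a GFF on $D'$.

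For the Markov property, the plan is to decompose $\mathcal H_0^1(D) = \mathcal H_0^1(B) \oplus \mathrm{Harm}(B;D)$, where the first summand consists of $\mathcal H_0^1(D)$-functions vanishing on $\partial B$ and the second is its $(\cdot,\cdot)_\nabla$-orthogonal complement. An integration by parts shows this complement is exactly the subspace of functions harmonic in $B$. Choosing Dirichlet-orthonormal bases $(\phi_i^{(1)})$ and $(\phi_j^{(2)})$ of the two summands, concatenating them into an orthonormal basis of $\mathcal H_0^1(D)$, and splitting the expansion from Theorem \ref{thm:GFFapp}(1), I obtain $\Gamma = \sum_i X_i \phi_i^{(1)} + \sum_j Y_j \phi_j^{(2)} =: \Gamma^B + \Gamma_B$ with $(X_i)$ and $(Y_j)$ jointly independent standard Gaussians. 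A direct covariance computation identifies $\Gamma^B$ as a zero-boundary GFF on $B$ (extended by $0$), and the independence of $\Gamma^B$ and $\Gamma_B$ is forced by the orthogonality of the two bases.

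The main obstacle is upgrading ``$\Gamma_B$ is expanded in functions harmonic in $B$'' to the pointwise statement that $\Gamma_B$ inside $B$ is the harmonic extension $h_B$ of the boundary values of $\Gamma$. For this I would show that the random series $\sum_j Y_j \phi_j^{(2)}$ converges almost surely locally uniformly on every compact $K \subset B$: the interior mean-value estimate bounds $\sup_K |u|$ by $C_{K,K'}\,\|u\|_{L^2(K')}$ for harmonic $u$ on a slightly larger $K' \subset B$, and the $L^2$-tails of the series are controlled by the Gaussian variances of $(Y_j)$. Once pointwise harmonicity of $\Gamma_B$ on $B$ is established, the identification $\Gamma_B = h_B$ is immediate by matching boundary behaviour.
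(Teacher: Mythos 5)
The paper does not prove this theorem at all: it is stated as a standard preliminary and delegated to the references (\cite{She}, \cite{Berln}, \cite{Gar}), so there is no in-paper argument to compare against. Your proof is the standard one and is essentially correct. Part (1) is fine: the pushforward convention $(\phi(\Gamma^D),g)=(\Gamma^D,(g\circ\phi)|\phi'|^2)$ is the right one for a distribution identified with a density, the Jacobian cancellation is exact, and conformal invariance of the Dirichlet Green's function does the rest. Part (2) via the Dirichlet-orthogonal splitting $\mathcal H^1_0(D)=\mathcal H^1_0(B)\oplus\mathrm{Harm}(B;D)$ is also the standard route; the only substantive step you compress is the identification of the covariance of $\Gamma^B$, which requires observing that the $(\cdot,\cdot)_\nabla$-projection of $\G_D f$ onto $\mathcal H^1_0(B)$ is $\G_B f$ for $f$ supported in $B$ (because $\G_D f-\G_B f$ is harmonic in $B$) --- worth saying explicitly. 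The one place your sketch papers over the actual estimate is the locally uniform convergence of $\sum_j Y_j\phi_j^{(2)}$: the $Y_j$ all have variance $1$, so the control cannot come from ``the Gaussian variances of $(Y_j)$''; what you need is $\sum_j\|\phi_j^{(2)}\|^2_{L^2(K')}=\int_{K'}\big(\G_D(x,x)-\G_B(x,x)\big)\,dx<\infty$, which holds because the covariance kernel of the harmonic part is $\G_D-\G_B$ and its logarithmic singularities cancel on the diagonal, uniformly on compacts of $B$. With that input the martingale $L^2(K')$-convergence plus the interior sup-bound for harmonic functions gives almost sure locally uniform convergence, as you intended. Finally, note that the clause ``harmonic extension of the field values on $\partial B$'' is only a heuristic description (for general open $B$ the field has no pointwise boundary values), so there is nothing further to ``match''; the paper effectively defines $h_B$ to be the harmonic part you have constructed.
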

Henceforth we drop $D$ and restrict ourselves mostly to the unit disk $D = \D$ for simplicity and concreteness, in which case we write $\Gamma = \Gamma^\D$. 

A simple calculation shows that for a fixed $z$, the circle-average process $\Gamma_\eps(z)$, with $\eps \leq d(z,\partial D)$, and parametrized using $-\log \eps + \log \CR(z,\partial D)$ has the law of a Brownian motion started from $\Gamma_{d(z,\partial D)}(z)$. Here by $\CR(z,\partial D)$ we denote the conformal radius of $D$ at the point $z$. 

We will also use one simple continuity estimate on the circle-average process that follows from a direct calculation using the Green's kernel, see e.g. \cite{TP}:

\begin{lemma}\label{lem:ca}
Let $\Gamma_\eps$ denote the circle-average process. Then there is a constant $C$ such that for all $\eps \leq 1$, and for all $z,w \in \D$ we have that
$$\E\left[(\Gamma_\eps(z) - \Gamma_\eps(w))^2\right] \leq C\frac{|z-w|}{\eps}.$$
\end{lemma}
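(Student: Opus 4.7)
The plan is to compute the variance via the Green's kernel and reduce to explicit Newton-potential integrals. I would start by writing
\begin{equation*}
\E[(\Gamma_\eps(z) - \Gamma_\eps(w))^2] = \iint (\rho_\eps^z - \rho_\eps^w)(dx) \, \G_D(x,y) \, (\rho_\eps^z - \rho_\eps^w)(dy)
\end{equation*}
and decomposing $\G_D(x,y) = -\log|x-y| + g(x,y)$ into its singular and smooth parts, handling each separately.

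For the singular contribution, the key tool is the classical potential-theoretic identity $\int \rho_\eps^u(dx)\, \log|x-y| = \log \max(\eps, |u-y|)$, obtained by applying the mean value theorem to the harmonic function $\log|\cdot - y|$ on the disk $B(u, \eps)$ (and checking the inner case by hand). Applying this twice and splitting $\log\max(\eps, a) = \log a + \log^+(\eps/a)$, the log-contribution to the variance reduces, after cancellations, to an expression of the form
\begin{equation*}
2 \log^+\!\left(\tfrac{|z-w|}{\eps}\right) + 2 F\!\left(\tfrac{|z-w|}{\eps}\right),
\end{equation*}
where $F(u)$ is a universal, continuous, non-negative function supported on $[0, 2]$ (obtained by rescaling the remaining integral to the unit circle). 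A short Taylor expansion will then give $F(u) = u/\pi + O(u^2)$ near the origin, hence $F(u) \leq Cu$ uniformly on $[0,2]$; combined with $\log x \leq x$ for $x \geq 1$, this bounds the log-part by $C|z-w|/\eps$.

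For the smooth part, I would use that $g$ is harmonic separately in each variable on $D$, so when $B(z,\eps), B(w,\eps) \subset D$ the double circle mean acts exactly and the $g$-contribution to the variance equals $g(z,z) + g(w,w) - 2 g(z,w) = O(|z-w|^2)$ by $C^2$-regularity of $g$ on compact subsets of $D \times D$. Since $|z-w| \leq 2$ and $\eps \leq 1$, this is dominated by the target bound $C|z-w|/\eps$. Circles crossing $\partial D$ only truncate the averages (since $\Gamma$ is extended by $0$ outside $D$) and produce smaller boundary terms which are easy to absorb into the constant.

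The main obstacle is the short-range regime $|z-w| < \eps$, where the two averaging circles overlap and the simple mean value reduction stops giving the answer directly. The sharp $|z-w|/\eps$ scaling, rather than a mere logarithm, hinges on the linear vanishing $F(u) \sim u/\pi$ at the origin; pinning down this linear behaviour is the crucial step—later responsible, via Kolmogorov's criterion, for the H\"older continuity of the circle-average process used throughout the paper.
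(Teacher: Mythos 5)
The paper does not actually prove this lemma --- it defers to \cite{TP} with the remark that it ``follows from a direct calculation using the Green's kernel'' --- and your proposal is precisely that calculation, correctly executed on the main point. The reduction via $\int \rho_\eps^u(dx)\log|x-y| = \log\max(\eps,|u-y|)$ is the right move, and your identification of the log-contribution as $2\log^+(|z-w|/\eps)$ plus a rescaled correction $F(|z-w|/\eps)$ vanishing \emph{linearly} at the origin is exactly where the $|z-w|/\eps$ rate comes from; the constant $1/\pi$ is even correct. A small simplification: you can skip the Taylor expansion of $F$ entirely by writing the log-part as $2\int \rho_\eps^w(dy)\log^+\left(|z-y|/\eps\right)$ and using $\log^+ x \le (x-1)^+$ together with $|z-y|\le |z-w|+\eps$ for $y$ on the circle around $w$, which gives the bound $2|z-w|/\eps$ in all regimes at once.

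The one genuine soft spot is uniformity over all $z,w\in\D$ and $\eps\le 1$, which is what the statement asserts. Your treatment of the smooth part invokes $C^2$-regularity of $g$ ``on compact subsets of $D\times D$'', but $g$ is not $C^2$-bounded up to the boundary diagonal (for $\D$, $g(x,y)=\log|1-x\bar y|$ degenerates as $x,y$ approach the same boundary point), so the $O(|z-w|^2)$ estimate is not uniform; for the disk one can rescue this via the identity $|1-z\bar w|^2=(1-|z|^2)(1-|w|^2)+|z-w|^2$, which shows the $g$-contribution $g(z,z)+g(w,w)-2g(z,w)$ is in fact nonpositive and hence harmless for an upper bound. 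More seriously, when the circles cross $\partial\D$ the two averaging measures restricted to $\D$ no longer have equal mass, the mean-value identities no longer apply as stated, and the cancellation your computation relies on must be re-derived by hand; ``easy to absorb into the constant'' is not a proof here, since a constant-size error would ruin the bound when $|z-w|\ll\eps$. As written, your argument establishes the lemma for $z,w$ in a fixed compact subset of $\D$ with $B(z,\eps),B(w,\eps)\subset\D$ --- which is the only regime in which the paper ever applies it (e.g.\ in the proof of Lemma \ref{lem:cmp}) --- but not the literal statement.
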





\subsection{Some auxiliary results}

\subsubsection{Concentration of Gaussian measures}
We recall here two standard results from the theory of Gaussian processes, estimating the variance of the supremum of a continuous Gaussian process and its mean. Both can be found in any standard reference on Gaussian processes, for example \cite{Adler}.

\begin{thm}[Borell-TIS]\label{thm:TIS}
Let $\Gamma$ be a zero mean and almost surely finite Gaussian process on a compact metric space $T$. Denote by $S$ its supremum and let $\sigma^2 = \sup_{t \in T} \E \Gamma(t)^2$ . Then $S -\E S$ is $\sigma^2$-sub-Gaussian, i.e. for all $\lambda \in \R$
$$\E e^{\lambda (S - \E S)} \leq e^{\frac{\lambda^2\sigma^2}{2}}.$$
A similar claim also holds for the infimum.
\end{thm}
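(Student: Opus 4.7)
The plan is to reduce the Borell-TIS bound to the Gaussian concentration inequality for Lipschitz functions of a standard Gaussian vector, and then invoke the latter as a black-box from \cite{Adler}.

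First, I would reduce to a finite index set. Passing to a separable modification, one picks a countable dense $\{t_k\}_{k \geq 1} \subset T$ and sets $S_n := \max_{k \leq n}\Gamma(t_k)$, so that $S_n \nearrow S$ almost surely. Proving the sub-Gaussian MGF bound with constant $\sigma_n^2 := \max_{k \leq n} \E \Gamma(t_k)^2 \leq \sigma^2$ for each $S_n$ then yields the claim for $S$ via monotone convergence for $\E S_n$ (using the a.s.\ finiteness of $S$ together with the fact that a sub-Gaussian bound for $S_n - \E S_n$ gives uniform integrability) and Fatou applied to the moment generating function.

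For the finite case $\{t_1, \ldots, t_n\}$, I factor the covariance matrix $C = (\C_\Gamma(t_i, t_j))_{i,j}$ as $C = A A^\top$ and write $(\Gamma(t_i))_{i \leq n} = AZ$ for $Z$ a standard Gaussian vector in $\R^n$. Then $S_n = f(Z)$ with $f(z) := \max_i (Az)_i$. Each coordinate $z \mapsto (Az)_i$ is linear with Euclidean gradient norm $\|A_i\|_2 = \sqrt{\E \Gamma(t_i)^2} \leq \sigma_n \leq \sigma$, so the pointwise maximum $f$ is $\sigma$-Lipschitz on $\R^n$.

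The core input is then the Gaussian concentration inequality: for every $L$-Lipschitz $f: \R^n \to \R$ and $Z \sim N(0, I_n)$,
\[
\E e^{\lambda(f(Z) - \E f(Z))} \leq e^{\lambda^2 L^2 / 2}.
\]
Applied with $L = \sigma$ this is exactly Borell-TIS; the infimum case follows by applying the same bound to $-\Gamma$. The main obstacle is this last ingredient: the sharp constant $1/2$ is usually obtained from the Gaussian isoperimetric inequality (which characterises half-spaces as extremizers), while a softer Maurey-Pisier rotation argument -- writing $f(Z) - f(Z')$ as $\int_0^{\pi/2} \langle \nabla f(Z_\theta), Z \cos\theta - Z' \sin\theta \rangle \, d\theta$ along the circular interpolation $Z_\theta = Z \sin\theta + Z' \cos\theta$ of $Z$ and an independent copy $Z'$, and using independence of $Z_\theta$ and its derivative together with Jensen -- yields the analogous bound at the cost of a $\pi^2/4$ factor essentially for free. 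Since the statement is classical and only used as a tool here, I would simply quote it.
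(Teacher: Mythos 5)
Your argument is correct, but note that the paper does not actually prove this statement: it is stated as a standard preliminary and delegated wholesale to a reference (\cite{Adler}), so there is no in-paper proof to compare against. What you give is the canonical derivation: a separability reduction to finite index sets, the representation $(\Gamma(t_i))_i = AZ$ with $\|A_i\|_2^2 = \E\Gamma(t_i)^2 \le \sigma^2$ so that the maximum is a $\sigma$-Lipschitz function of a standard Gaussian vector, and then Gaussian concentration for Lipschitz functions. The two points you flag are exactly the right ones to flag: the passage to the limit needs $\E S < \infty$, which follows from a.s.\ finiteness of $S$ combined with the uniform sub-Gaussian concentration of $S_n$ around $\E S_n$ (otherwise $\E S_n \to \infty$ would force $S = \infty$ a.s.); and the sharp constant $1/2$ in the exponent genuinely requires the Gaussian isoperimetric inequality, whereas the Maurey--Pisier interpolation only gives $\pi^2/8$. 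For every use of Borell--TIS in this paper (sub-Gaussianity of $S_{\tilde r, r, z}$ in Lemma \ref{lem:cmp}, the variance bounds in Lemma \ref{lem:cor}, the tail estimates in Section \ref{sec:KPZ}) the constant is irrelevant, so the softer interpolation route would in fact suffice throughout.
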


\begin{thm}[Dudley entropy bound]\label{thm:Dudley}
Let $\Gamma$ be a zero mean Gaussian process on a compact metric space $T$. Denote by $S$ its supremum and define for $s,t \in T$, the Gaussian pseudo-metric $d(t,s) = \sqrt{\E\left[(\Gamma(t) - \Gamma(s))^2\right]}$. Then denoting by $N(\eps, T, d)$ the covering number of $T$ w.r.t this pseudo-metric, we have 
$$\E S \leq 24\int_0^\infty \sqrt{\log N(\eps,T,d)}d\eps.$$
Similar bound also holds to bound the infimum from below.
\end{thm}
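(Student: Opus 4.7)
The plan is to use the classical chaining argument at dyadic scales. First I would set up a sequence of successively finer nets: for each integer $k$, choose $T_k \subseteq T$ to be a minimal $2^{-k}$-net for the pseudo-metric $d$, so $|T_k| = N(2^{-k}, T, d)$, and let $\pi_k : T \to T_k$ be the nearest-point projection. Since $T$ is compact it has finite diameter, so for some sufficiently negative $k_0$ the set $T_{k_0}$ consists of a single point $t_0$. Because $\Gamma$ is centred we have $\E \Gamma(t_0) = 0$, and it suffices to bound $\E \sup_{t \in T} (\Gamma(t) - \Gamma(t_0))$.

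Next I would write the telescoping expansion
\[
\Gamma(t) - \Gamma(t_0) = \sum_{k \geq k_0} \bigl(\Gamma(\pi_{k+1}(t)) - \Gamma(\pi_k(t))\bigr),
\]
which converges (in $L^2$ and a.s.) by the a.s.\ finiteness and continuity of $\Gamma$. For each $k$, the pair $(\pi_k(t), \pi_{k+1}(t))$ takes at most $|T_k|\cdot|T_{k+1}| \leq N(2^{-k-1}, T, d)^2$ distinct values as $t$ varies, and by the triangle inequality the corresponding increment is a centred Gaussian with standard deviation at most $d(\pi_k(t), t) + d(t, \pi_{k+1}(t)) \leq 3\cdot 2^{-k-1}$. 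Applying the elementary bound $\E \max_{i \leq M} X_i \leq \sigma\sqrt{2\log M}$ for centred Gaussians with standard deviation $\leq \sigma$, the expected supremum over $t$ of the $k$-th increment is at most $3\cdot 2^{-k-1}\sqrt{4\log N(2^{-k-1}, T, d)}$.

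Interchanging supremum with sum and summing over $k$ then yields
\[
\E \sup_{t \in T} \Gamma(t) \leq C \sum_{k \geq k_0} 2^{-k} \sqrt{\log N(2^{-k-1}, T, d)}
\]
for an absolute constant $C$. The last step is to recognise this dyadic sum as a Riemann-type comparison for the decreasing function $\eps \mapsto \sqrt{\log N(\eps, T, d)}$: each term $2^{-k}\sqrt{\log N(2^{-k-1}, T, d)}$ is bounded by a constant multiple of $\int_{2^{-k-2}}^{2^{-k-1}} \sqrt{\log N(\eps, T, d)}\, d\eps$, so the whole sum is controlled by $\int_0^{\infty} \sqrt{\log N(\eps, T, d)} \, d\eps$ (the integrand vanishes for $\eps > \mathrm{diam}(T)$). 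The main obstacle is purely one of bookkeeping: tracking the factors accumulated from the triangle inequality, the $\sqrt{2}$ in the Gaussian maximum bound, and the dyadic-to-integral comparison, so as to land on the stated constant $24$. Any reasonable execution of this chaining produces some universal constant; the precise value $24$ is inessential for the applications in the paper. The lower bound for the infimum is immediate by applying the same estimate to $-\Gamma$.
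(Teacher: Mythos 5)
The paper does not prove this theorem at all: it is quoted as a standard fact from the theory of Gaussian processes with a pointer to \cite{Adler}, so there is no in-paper argument to compare against. Your chaining proof is the standard one and is essentially correct: the dyadic nets, the telescoping along $\pi_k$, the bound $\E\max_{i\le M}X_i\le\sigma\sqrt{2\log M}$ applied to the at most $N(2^{-k-1},T,d)^2$ increments of standard deviation at most $3\cdot 2^{-k-1}$, and the comparison of the dyadic sum with the entropy integral using monotonicity of $\eps\mapsto N(\eps,T,d)$ all fit together; carried out as you describe, the constant comes out to $12$, safely below the stated $24$.

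Two small points to tighten. First, the theorem assumes nothing like sample-path continuity, so you cannot justify the convergence of the telescoping series by ``continuity of $\Gamma$''; the standard fix is to prove the bound for an arbitrary finite subset $F\subseteq T$ (where the chain terminates after finitely many levels because the $2^{-k}$-nets of $F$ eventually equal $F$), using the covering numbers of $T$ rather than of $F$, and then define $\E S$ as the supremum over finite subsets. Second, when you interchange supremum and sum you should note that the resulting bound is on $\E\sup_t(\Gamma(t)-\Gamma(t_0))$, which equals $\E S$ only because $\E\Gamma(t_0)=0$; you do say this at the outset, so it is only a matter of keeping the thread visible. Neither issue affects the validity of the argument.
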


\subsubsection{Convergence of random measures}
For a measure $M(dz)$ on a bounded domain $D$ and a continuous bounded function $f$ denote $M(f) = \int_D f M(dz)$. A very helpful result and the basis for all the convergence arguments of GMC measures is the fact that in order to show weak convergence of the random measures $M_n$, it suffices to guarantee convergence of $M_n(f)$ for a countable number of functions $f$:
 
\begin{thm}\label{thm:conv}
Let $(M_\eps(dz))_{\eps > 0}$ be random finite non-negative measures on a closed bounded domain $\overline{D}$. Suppose that for any positive continuous function $f$ on $\overline{D}$ we have that $M_\eps(f)$ converges to some $M(f)$ in law / in probability / almost surely. Then the $M(f)$ characterize a finite non-negative measure $M(dz)$ on $\overline{D}$ and $M_\eps(dz)$ converge weakly in law / in probability / almost surely to $M(dz)$. 
\end{thm}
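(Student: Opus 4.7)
The plan is to exploit the separability of $C(\overline{D})$ together with the Riesz representation theorem. Fix a countable family $\{f_k\}_{k \geq 1}$ of positive continuous functions that is dense in the positive cone of $C(\overline{D})$ for the uniform norm, including the constant $\mathbf{1}$ (such a family exists since $\overline{D}$ is a compact metric space and $C(\overline{D})$ is separable). The key observation is that convergence of the total masses $M_\eps(\mathbf{1})$ together with uniform approximation by the $f_k$ promotes countable-family convergence to all bounded continuous test functions.

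I would first treat the almost sure case. On the countable intersection of the full-probability events $\{M_\eps(f_k) \to M(f_k)\}$, which still has probability one, the total masses $M_\eps(\mathbf{1})$ are bounded by a random constant $C$. For an arbitrary positive continuous $f$ and $\delta > 0$, pick $f_k$ with $\|f - f_k\|_\infty < \delta$; inserting this $f_k$ into a triangle inequality and using the crude bound $|M_\eps(g) - M_\eps(h)| \leq M_\eps(\mathbf{1})\|g-h\|_\infty \leq C\delta$ at both endpoints shows that $M_\eps(f)$ is Cauchy in $\eps$ and converges to some $L(f)$. Linearity and positivity pass to the limit, so $L$ is a bounded positive linear functional on $C(\overline{D})$, and the Riesz representation theorem produces a finite positive Borel measure $M$ on $\overline{D}$ with $L(f) = \int f\, dM$. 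By definition this is a.s. weak convergence $M_\eps \Rightarrow M$.

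The convergence-in-probability case reduces to the a.s. one by the standard subsequence principle: any sequence $\eps_n \to 0$ admits, by a diagonal extraction over the countable family $\{f_k\}$, a further subsequence along which $M_{\eps_{n_j}}(f_k) \to M(f_k)$ a.s. simultaneously for all $k$, after which the a.s. argument applies. For convergence in law, tightness of the family of random measures on the compact set $\overline{D}$ follows from tightness of the total masses $M_\eps(\mathbf{1})$, which is guaranteed by their convergence in law; Prokhorov then produces a subsequential weak limit $\tilde{M}$ along any subsequence, whose marginals $\tilde{M}(f_k)$ coincide in law with the prescribed $M(f_k)$. Since joint laws on a countable determining family uniquely identify the law of a random finite measure, all subsequential limits agree and the full family converges in law.

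The main conceptual obstacle is the in-law case: a priori there is no random measure $M$ living on any common probability space, only a compatible collection of marginal laws of the real-valued functionals $M(f)$. The argument sidesteps this by producing $M$ as a subsequential weak limit in the space of measures (using compactness of $\overline{D}$ to get tightness) and then using the countable determining family to pin down its law uniquely. In contrast, the a.s. case is cleaner because Riesz representation delivers an honest pathwise measure on the same probability space where the $M_\eps$ are defined.
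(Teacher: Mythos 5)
Your argument rests on the same core idea as the paper's: reduce everything to a countable family of test functions using separability of $C(\overline{D})$. The paper packages this as an explicit metric $d(M_1,M_2)=\sum_i 2^{-i}|M_1(\phi_i)-M_2(\phi_i)|$ metrizing weak convergence on $\mathcal M(\overline D)$, whereas you run the reduction by hand (Riesz representation for the almost sure case, the subsequence principle for convergence in probability, Prokhorov for convergence in law). Your almost sure and in-probability cases are complete and correct, and in fact supply a detail the paper's two-sentence sketch leaves implicit, namely how the limiting object $M$ is actually produced as a measure.

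There is, however, one step in your in-law case that does not follow as written. After extracting a subsequential weak limit $\tilde M$ via Prokhorov, you observe that each \emph{one-dimensional} marginal $\tilde M(f_k)$ has the prescribed law of $M(f_k)$, and then invoke the fact that ``joint laws on a countable determining family uniquely identify the law of a random finite measure.'' That fact is true, but it requires the \emph{joint} laws of the tuples $(\tilde M(f_{k_1}),\dots,\tilde M(f_{k_m}))$, and agreement of one-dimensional marginals does not in general determine joint laws. The gap is repairable precisely because the hypothesis gives convergence in law of $M_\eps(f)$ for \emph{every} positive continuous $f$: applying it to $f=\sum_i a_i f_{k_i}$ with $a_i\geq 0$ and using that $\mu\mapsto\mu(f)$ is weakly continuous, you get convergence of $\E\,e^{-\sum_i a_i M_\eps(f_{k_i})}$ for all nonnegative $(a_i)$, and the multivariate Laplace transform on the nonnegative orthant (together with coordinatewise tightness) determines and identifies the limiting joint law. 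With that insertion, all subsequential limits of the laws of $M_\eps$ on $\mathcal M(\overline D)$ coincide and the in-law case closes; this is essentially Kallenberg's criterion, which the paper cites for exactly this purpose.
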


The theorem follows from the fact that the weak convergence of measures on $\mathcal{M}(\overline{D})$, the space of finite measures on $\overline D$, can be metrized by defining for any two finite measures $M_1, M_2 \in \mathcal{M}(\overline{D})$ $$d(M_1, M_2) = \sum_i \frac{|M_1(\phi_i) - M_2(\phi_i)|}{2^i}.$$ Here $(\phi_i)_{i \in \N}$ form a positive (Schauder) basis of the space of continuous bounded functions on $\overline{D}$ such that $||\phi_i||_\infty = 1$ for all $i \in \N$. Such a basis exists as the space of continuous bounded functions on $\overline{D}$ with uniform norm is separable, or indeed can be explicitly constructed.

See e.g. \cite{Gar} or \cite{Ber} for convergence in probability of the GMC measure, or \cite{Kb} for general statements about convergence in law for random measures. 

\section{Second moment calculation and first constructions of the GMC}\label{sec:basic}

The idea of the basic construction of a GMC measure is just to approximate $\Gamma$ using pointwise defined fields $\Gamma_\eps$ from Theorem \ref{thm:GFFapp}, then define approximate exponentials $$M_\eps(dz) = \exp(\gamma \Gamma_\eps(z) - \frac{\gamma^2}{2}\E \Gamma_\eps^2(z))dz,$$ where by $dz$ we denote Lebesgue measure, and finally show that $M_\eps(dz)$ converges to a non-trivial limit as $\eps \to 0$. In the regime $|\gamma| < \sqrt{2}$ the convergence to a non-trivial limit is relatively easy to justify via a second-moment calculation. Observe that the regularization has been chosen such that $\E M_\eps(dz)$ is precisely Lebesgue measure.

Let us point out straight away that the techniques in this section apply to all log-correlated Gaussian fields in all dimensions, e.g. to those with covariance kernel $\C(z,w) = - \log |z-w| + g(z,w)$ with $g(z,w)$ bounded and continuous. Moreover, they also apply when the underlying base measure is not Lebesgue measure, but rather some finite Radon measure of dimension $d$; in this case the the $L^2$ regime corresponds to $|\gamma| < \sqrt{d}$.

\subsection*{Second moment calculation} Let $f$ be a positive continuous function on $\overline{\D}$ and $\Gamma_\eps$ be some approximation of $\Gamma$ in Theorem \ref{thm:GFFapp}. Denote its covariance kernel by $\C_\eps(z,w)$. We have $$\E\left[M_\eps(f)^2\right] = \E \left[\int_{\D \times \D}f(z)f(w)  e^{\gamma (\Gamma_\eps(z) + \Gamma_\eps(w)) - \frac{\gamma^2}{2}(\C_\eps(z,z) - \C_\eps(w,w))}dzdw\right],$$ 
which by taking the expectation inside and evaluating exponential moments of the Gaussian $\Gamma_\eps(z) + \Gamma_\eps(w)$ is equal to
$$\int_{\D \times \D}f(z)f(w)\exp(\gamma^2 \C_\eps(z,w)) dzdw.$$
Now it is direct to check that there is $c > 0$ s.t. for all $z,w \in D$ it holds $\C_\eps(z,w) \leq -\log |z-w| + c$. Thus we can bound this by
$$C'\int_{\D \times \D}|z-w|^{-\gamma^2} dzdw,$$
which is finite if and only if $|\gamma| < \sqrt{2}$. 

Roughly this second moment argument provides the uniform integrability of the variables $\Gamma_\eps(f)$ in this regime. Hence it also gives tightness in law. 

\subsection*{Convergence for the martingale sequence}

For the orthogonal sequence of Theorem \ref{thm:GFFapp}, $M(\Gamma^n,f)$ is a positive martingale in $n \in \N$. Thus it converges almost surely to a finite limit. But the calculation above shows that for $|\gamma| < \sqrt{2}$ this martingale is also uniformly integrable and in fact in $L^2(\P)$. Hence we have convergence in $L^2(\P)$. Thus the limit is non-zero with positive probability. Finally, in fact the limit is almost surely non-zero due to a zero-one argument: the event $M(\Gamma,f) = 0$ is independent of $\Gamma^n$, for any finite $n$.

\subsection*{Convergence for mollifier approximations}
To obtain convergence for the mollifier approximations we will need to do some more work. Maybe the most direct way is to just show that the sequence $M_\eps(f)$ is in fact Cauchy in $L^2(\P)$, i.e. to show that
$$\E\left[(M_{\eps}(f)-M_{\eps'}(f))^2\right]$$
converges to zero uniformly in $\eps' \geq \eps$. This expectation is equal to
$$\int_{\D \times \D}dzdw\left(\exp(\gamma^2\C_\eps(z,w)) + \exp(\gamma^2 \C_{\eps'}(z,w) - 2\exp(\gamma^2\E \Gamma_\eps(z) \Gamma_{\eps'}(w))\right).$$
For the mollifier and circle-average approximations, this can be bounded by a similar, but slightly more careful calculation than the one above. The heart of the calculation is to notice that $\C_\eps(z,w) = \C_{\eps'}(z,w)$ for $|z-w| \geq 2\eps'$. On the other hand the integral over the near-diagonal part, $|z-w| \leq 2\eps'$, vanishes with $\eps' \to 0$.

\subsection*{Uniqueness using uniform integrability}

Let $\Gamma^n$ be the orthogonal sequence, and $\Gamma_\eps$ be some other approximation sequence as above. We saw that $M(\Gamma_n,f) \to M(f)$ almost surely and that $M(\Gamma_\eps,f)$ converges in $L^1(\P)$. But why should these limits be the same? 

In fact, this follow from just three simple observations:

\begin{enumerate}
\item For the orthogonal sequence we can write $M(\Gamma^n,f) = \E\left[M(f) | \Gamma^n\right]$.
\item An explicit calculation shows that $\E \left[M_{\eps}(f) | \Gamma^n\right]$ converges almost surely to $M(\Gamma^n, f)$ as $\eps \to 0$. 
\item Convergence of $M_{\eps}(f) \to \tilde M(f)$ in $L^1(\P)$ shows that $\E \left[M_{\eps}(f) | \Gamma^n\right]$ also converges in $L^1(\P)$ to $\E \left[\tilde M(f) | \Gamma^n\right]$. 
\end{enumerate}

We deduce that the the limit $\tilde M(f)$ agrees with $M(f)$. 

\subsubsection*{Basic result in the $L^2$-regime}

Combining the calculations above together with Theorem \ref{thm:conv} we have obtained the following basic result:

\begin{prop}\label{prop:basic}
Let $|\gamma| < \sqrt{2}$ and consider the approximating fields $\Gamma_\eps$ as in Theorem \ref{thm:GFFapp}. Then for all these approximate measures $$M_\eps(dz) = M^\gamma(\Gamma_\eps, dz) = e^{\gamma \Gamma_\eps - \frac{\gamma^2}{2}\C_\eps(z,z)}dz$$
converge weakly in probability to a unique non-trivial measure $M(dz)$.
\end{prop}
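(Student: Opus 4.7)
The plan is to apply Theorem \ref{thm:conv}: it suffices to prove that for every positive continuous $f$ on $\overline{\D}$, the random variables $M_\eps(f)$ converge in probability to a common limit $M(f)$, with the limit not depending on the chosen approximation in Theorem \ref{thm:GFFapp}. Non-triviality of $M(dz)$ then follows automatically from $\E M(f) = \int_\D f(z)\,dz > 0$.

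I would carry out three steps. First, for the orthogonal sequence $\Gamma^n = \sum_{i=1}^n X_i \phi_i$, the sequence $(M(\Gamma^n, f))_n$ is a non-negative martingale (by the tower property together with the exponential-moment renormalization), and the second-moment bound $\E[M(\Gamma^n, f)^2] \leq C' \int_{\D \times \D} |z-w|^{-\gamma^2}\,dz\,dw$ is uniform in $n$ and finite for $|\gamma| < \sqrt{2}$. Hence by Doob's theorem the martingale converges almost surely and in $L^2(\P)$ to some $M(f) \geq 0$. Positivity a.s.\ then follows from Kolmogorov's zero-one law applied to the tail event $\{M(f) = 0\}$ combined with $\E M(f) > 0$.

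Second, for the mollifier and circle-average approximations I would show directly that $M_\eps(f)$ is Cauchy in $L^2(\P)$. Expanding
\[
\E[(M_\eps(f) - M_{\eps'}(f))^2] = \int_{\D \times \D} f(z) f(w) \bigl(e^{\gamma^2 \C_\eps(z,w)} + e^{\gamma^2 \C_{\eps'}(z,w)} - 2 e^{\gamma^2 \E \Gamma_\eps(z)\Gamma_{\eps'}(w)}\bigr)\,dz\,dw,
\]
the three covariances coincide with the Green kernel $\G_\D(z,w)$ on the bulk $|z-w| > 2\eps'$, producing exact cancellation of the integrand there, while on the near-diagonal strip $|z-w| \leq 2\eps'$ each exponential is dominated by a constant multiple of $|z-w|^{-\gamma^2}$, whose integral vanishes with $\eps'$ in the $L^2$ regime. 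This produces an $L^2$-limit $\tilde M(f)$.

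Third, to identify the two limits I would use conditional expectation: $M(\Gamma^n, f) = \E[M(f) \mid \Gamma^n]$ is immediate from the martingale structure, and a direct Gaussian computation (decomposing $\Gamma_\eps(z)$ as its orthogonal projection onto $\mathrm{span}(\phi_1,\ldots,\phi_n)$ plus independent fluctuations whose variance is exactly compensated by the renormalization) gives $\E[M_\eps(f) \mid \Gamma^n] \to M(\Gamma^n, f)$ a.s.\ as $\eps \to 0$. Combined with $L^1$-convergence $M_\eps(f) \to \tilde M(f)$, this forces $\E[\tilde M(f) \mid \Gamma^n] = M(\Gamma^n, f)$ for every $n$; since $\sigma(\Gamma^n)$ increases to the $\sigma$-algebra generated by $\Gamma$ and both limits are measurable w.r.t.\ it, Lévy's upward theorem yields $\tilde M(f) = M(f)$ a.s. The main technical obstacle is step two: extracting the cancellation among three near-logarithmic exponentials close to the diagonal requires sharp control on $\C_\eps(z,w) - \G_\D(z,w)$ uniform in $\eps$, together with integrability of the $|z-w|^{-\gamma^2}$ singularity, which is precisely where the condition $|\gamma| < \sqrt{2}$ enters as the $L^2$ threshold.
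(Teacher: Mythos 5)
Your proposal is correct and follows essentially the same route as the paper: the second-moment bound giving $L^2$ martingale convergence for the orthogonal sequence, the Cauchy-in-$L^2$ argument with bulk cancellation for mollifier and circle-average approximations, and the identification of the two limits via $\E[M_\eps(f)\mid\Gamma^n]\to M(\Gamma^n,f)$ combined with L\'evy's upward theorem. The only cosmetic difference is that you spell out the final identification step (L\'evy upward) slightly more explicitly than the paper does.
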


Moreover, notice that the same second-moment calculation gives the following corollary, which, as we will also see later is not yet sharp:

\begin{cor}[Dimension of the support]\label{cor:supp}
Let $\gamma \in (-\sqrt{2}, \sqrt{2})$. Then for any $a < 2-\gamma^2$, we have that $\E \int_D \frac{M^\gamma(dz)M^\gamma(dw)}{|z-w|^a} < C$. In particular Frostman's lemma implies that almost surely the Hausdorff dimension of the support of the measure is at least $2-\gamma^2$.
\end{cor}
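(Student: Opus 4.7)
The plan is to repeat the second-moment calculation carried out at the start of this section, but with the test function $f(z)f(w)$ replaced by the singular kernel $|z-w|^{-a}$, and then apply Frostman's lemma.

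First I would regularise: fix one of the approximation schemes $\Gamma_\eps$ from Theorem \ref{thm:GFFapp}, a truncation level $K>0$, and the bounded continuous function $K_a^K(z,w):=\min\{|z-w|^{-a},K\}$ on $\overline{\D}\times\overline{\D}$. The convergence in probability $M_\eps\Rightarrow M$ from Proposition \ref{prop:basic} tensorises to $M_\eps\otimes M_\eps\Rightarrow M\otimes M$ (tensor products of a positive basis of $C(\overline{\D})$ form a positive basis of $C(\overline{\D}\times\overline{\D})$, so the criterion of Theorem \ref{thm:conv} carries over to the product), and hence
\[
\iint_{\D\times\D}K_a^K(z,w)\,M_\eps(dz)M_\eps(dw) \ \xrightarrow[\eps\to 0]{\P} \ \iint_{\D\times\D}K_a^K(z,w)\,M(dz)M(dw).
\]
Passing to an almost surely convergent subsequence and applying Fatou, the expectation of the right-hand side is bounded by the liminf of the expectation of the left-hand side. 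The exponential-moment computation from the beginning of the section, done verbatim with $f(z)f(w)$ replaced by the bounded function $K_a^K(z,w)$, yields
\[
\E\iint_{\D\times\D}K_a^K(z,w)\,M_\eps(dz)M_\eps(dw) \ = \ \iint_{\D\times\D}K_a^K(z,w)\,e^{\gamma^2\C_\eps(z,w)}\,dz\,dw,
\]
and using $\C_\eps(z,w)\leq -\log|z-w|+c$ uniformly in $\eps$ this is at most
\[
C'\iint_{\D\times\D}|z-w|^{-a-\gamma^2}\,dz\,dw,
\]
which is finite exactly when $a+\gamma^2<2$, and, crucially, is independent of both $\eps$ and $K$. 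Sending $K\to\infty$ by monotone convergence delivers the claimed uniform bound on $\E\iint_{\D\times\D}|z-w|^{-a}M(dz)M(dw)$.

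Once the $a$-energy of $M$ is shown to be finite in expectation, and hence almost surely finite, Frostman's lemma applied on the almost sure event $\{M\neq 0\}$ (which has full probability by Proposition \ref{prop:basic}) yields that the Hausdorff dimension of $\mathrm{supp}(M)$ is at least $a$; letting $a$ increase to $2-\gamma^2$ along a countable sequence finishes the proof.

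The only subtle step is precisely the passage to the limit, since $|z-w|^{-a}$ is not in the class of bounded continuous functions covered by Theorem \ref{thm:conv}. The truncation-plus-monotone-convergence device above is the standard workaround; the bookkeeping just needs the bound to be uniform in both the regularisation parameter $\eps$ and the truncation level $K$, which is exactly what the second-moment computation provides.
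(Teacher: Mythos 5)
Your proposal is correct and takes essentially the same route as the paper, which simply asserts that ``the same second-moment calculation gives'' the corollary: the bound $\C_\eps(z,w)\leq -\log|z-w|+c$ turns the expected $a$-energy into $C'\iint|z-w|^{-a-\gamma^2}dz\,dw$, finite for $a<2-\gamma^2$, and Frostman's lemma concludes. The truncation-plus-Fatou-plus-monotone-convergence bookkeeping you add to pass from the approximations to the limiting measure is exactly the detail the paper leaves implicit, and it is carried out correctly.
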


The fact that we have several possible approximations, all giving the same measure allows us to prove properties of the measure $M^\gamma(\Gamma,dz)$ by choosing a convenient approximation for which the property is easier to prove. The following result in its exact form is inherent to the GFF:

\begin{lemma}[Change of co-ordinates]\label{lem:coc}
Let $\phi: \tilde D \to D$ be a conformal map. Then the push-forward of $M^\gamma(\Gamma^D, dz)$ from $D$ along $\phi^{-1}$ to $\tilde D$ has the same law as the measure $M^\gamma(\Gamma^{\tilde D} + \frac{2}{\gamma} \log |\phi'|)$.
\end{lemma}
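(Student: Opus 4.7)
The plan is to use the orthogonal decomposition approximation of Theorem \ref{thm:GFFapp}(1), which transforms cleanly under conformal maps. Let $(\psi_i)_{i \geq 1}$ be the eigenfunctions of the Dirichlet Laplacian on $\tilde D$, normalized so that $\|\psi_i\|_\nabla = 1$. Since the Dirichlet norm is conformally invariant, $(\psi_i \circ \phi^{-1})_{i \geq 1}$ is an orthonormal family in $\Hh^1_0(D)$. Taking a single sequence $(X_i)$ of i.i.d.\ standard Gaussians, I would couple
$$\Gamma^{\tilde D, n}(w) := \sum_{i=1}^n X_i\, \psi_i(w), \qquad \Gamma^{D, n}(z) := \sum_{i=1}^n X_i\, \psi_i(\phi^{-1}(z)),$$
which by Theorem \ref{thm:GFFapp}(1) converge almost surely (in the space of distributions) to $\Gamma^D$ and $\Gamma^{\tilde D}$ of the appropriate laws. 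Two identities are crucial: pointwise $\Gamma^{D,n}(\phi(w)) = \Gamma^{\tilde D, n}(w)$, and the Wick variances also agree, $\E\,\Gamma^{D,n}(\phi(w))^2 = \sum_{i=1}^n \psi_i(w)^2 = \E\,\Gamma^{\tilde D, n}(w)^2$.

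Next I would test the push-forward against a continuous $f$ compactly supported in $\tilde D$; then $f \circ \phi^{-1}$ is continuous and compactly supported in $D$. By Proposition \ref{prop:basic} applied to the orthogonal sequence and the change of variables $z = \phi(w)$, $dz = |\phi'(w)|^2\, dw$, one gets
$$\int_D f(\phi^{-1}(z))\, M^\gamma(\Gamma^{D,n}, dz) = \int_{\tilde D} f(w)\, |\phi'(w)|^2\, e^{\gamma\, \Gamma^{\tilde D, n}(w) - \frac{\gamma^2}{2}\E\,\Gamma^{\tilde D, n}(w)^2}\, dw = \int_{\tilde D} f(w)\, |\phi'(w)|^2\, M^\gamma(\Gamma^{\tilde D, n}, dw).$$
Since $w \mapsto f(w)|\phi'(w)|^2$ is continuous and bounded on $\tilde D$ (it has compact support there), a second application of Proposition \ref{prop:basic} passes to the limit on both sides and yields
$$\int_D f(\phi^{-1}(z))\, M^\gamma(\Gamma^D, dz) = \int_{\tilde D} f(w)\, |\phi'(w)|^2\, M^\gamma(\Gamma^{\tilde D}, dw).$$

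To match the claimed formula I would now write $|\phi'(w)|^2 = e^{\gamma F(w)}$ with $F(w) := \frac{2}{\gamma}\log|\phi'(w)|$. Since $F$ is deterministic and smooth in $\tilde D$, the measure $M^\gamma(\Gamma^{\tilde D} + F, dw)$, defined as the $n\to\infty$ limit of $M^\gamma(\Gamma^{\tilde D, n} + F, dw) = e^{\gamma F(w)}\, M^\gamma(\Gamma^{\tilde D, n}, dw)$ (the deterministic $F$ does not enter the variance), equals $e^{\gamma F(w)}\, M^\gamma(\Gamma^{\tilde D}, dw)$, which is exactly the right-hand side above. The main obstacle is the exact matching of the Wick variances at coupled points, which reflects the conformal invariance of the Dirichlet inner product and is what makes the shift an elementary explicit function of $|\phi'|$; for a generic log-correlated field only a bounded correction to $\log|\phi'|$ would survive. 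A minor technical side point is the possible blow-up of $|\phi'|$ near $\partial \tilde D$, which I sidestep by testing against compactly supported $f$ and then noting that neither measure charges the boundary (a direct second-moment argument in the subcritical regime).
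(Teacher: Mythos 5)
Your proposal is correct and follows essentially the same route as the paper: orthogonal (Dirichlet-basis) approximation, conformal invariance of the Dirichlet inner product to couple $\Gamma^{D,n}$ and $\Gamma^{\tilde D,n}$, a change of variables producing the Jacobian $|\phi'|^2$, and the rewriting $|\phi'|^2\,dw = e^{\gamma\cdot\frac{2}{\gamma}\log|\phi'|}\,dw$. If anything, you are more careful than the paper's one-line proof about why the pushed-forward basis is orthonormal (conformal invariance of $\|\cdot\|_\nabla$ rather than of Laplacian eigenfunctions) and about the behaviour of $|\phi'|$ near the boundary.
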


\begin{proof}
Consider the orthogonal approximation of the GFF and notice that the orthonormal Dirichlet eigenfunctions of $\Delta$ on $D$ map to the orthonormal Dirichlet eigenfunctions on $D'$. Thus denoting $\Gamma = \Gamma^D$ and $\tilde \Gamma = \Gamma^{\tilde D}$, we have that $\phi(\Gamma^n)$ has the law of $\phi(\tilde \Gamma^n)$. Hence the pushforward of $M(\Gamma^n, dz)$ has the law of $M(\tilde \Gamma^n, |\phi'(z)|^2dz)$. But $M(\tilde \Gamma^n, |\phi'(z)|^2dz) = M(\tilde \Gamma^n + \frac{2}{\gamma} \log \phi', dz)$ and thus the claim follows.
\end{proof}

\subsection*{Convergence of the process in $\gamma$}

In fact, it comes out that the convergence can easily be lifted to the convergence of $M^\gamma(\Gamma_\eps,f)$ seen as a continuous function of $\gamma$:

\begin{prop}
Let $f$ be continuous and bounded on $\overline{\D}$. Then process $M^\gamma(\Gamma,f)$ is H\"older continuous in $\gamma$ over any compact sub-interval $I$ of $(-\sqrt{2},\sqrt{2})$. Moreover, if $\Gamma_\eps \to \Gamma$ as in Proposition \ref{prop:basic}, then $(M^\gamma(\Gamma_\eps, f))_{\gamma \in I} \to (M^\gamma(\Gamma,f))_{\gamma \in I}$ in probability in the space of continuous functions on $I$ with the uniform norm.
\end{prop}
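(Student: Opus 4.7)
The plan is to verify a uniform-in-$\eps$ quadratic second-moment estimate for the $\gamma$-increments of $M^\gamma(\Gamma_\eps, f)$, apply Kolmogorov's continuity criterion to obtain H\"older regularity in $\gamma$ (both for the approximations and, via Fatou, for the limit), and then combine the resulting equicontinuity with the pointwise convergence from Proposition \ref{prop:basic} to upgrade to convergence in $C(I)$ in probability.

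First I would redo the second moment calculation of this section for the \emph{difference} of two values of $\gamma$, obtaining
$$\E\left[(M^{\gamma_1}(\Gamma_\eps, f) - M^{\gamma_2}(\Gamma_\eps, f))^2\right] = \iint_{\D \times \D} f(z)f(w)\, h\bigl(\gamma_1, \gamma_2, \C_\eps(z,w)\bigr)\, dz\, dw,$$
where $h(\gamma_1, \gamma_2, x) := e^{\gamma_1^2 x} + e^{\gamma_2^2 x} - 2 e^{\gamma_1 \gamma_2 x}$. The identity $\gamma_1^2 + \gamma_2^2 - 2\gamma_1 \gamma_2 = (\gamma_1 - \gamma_2)^2$ forces both $h$ and its first partials to vanish on the diagonal, and a Taylor expansion in $\Delta := \gamma_1 - \gamma_2$ gives a bound of the form $h(\gamma_1, \gamma_2, x) \leq C_I \Delta^2 (1 + x^2)\, e^{\bar\gamma^2 x}$ for $|\Delta|$ small, with some $\bar\gamma^2 < 2$ depending only on $I$ (using compactness of $I$ in $(-\sqrt 2, \sqrt 2)$). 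Combined with the uniform bound $\C_\eps(z,w) \leq -\log|z-w| + O(1)$, the resulting integral $\iint f(z)f(w)(1 + (\log|z-w|)^2)|z-w|^{-\bar\gamma^2}\, dz\, dw$ is finite, and hence $\E[(\cdots)^2] \leq C|\gamma_1 - \gamma_2|^2$ uniformly in $\eps > 0$ and in $\gamma_1, \gamma_2 \in I$.

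With this estimate, Kolmogorov's continuity criterion yields that each process $\gamma \mapsto M^\gamma(\Gamma_\eps, f)$ admits a continuous modification which is $\alpha$-H\"older for any $\alpha < 1/2$, with $L^2$ bounds on the H\"older norm uniform in $\eps$; passing the moment bound to the limit via Fatou and Proposition \ref{prop:basic} gives the same for $\gamma \mapsto M^\gamma(\Gamma, f)$, proving the first part. To upgrade the pointwise convergence to convergence in probability in $C(I)$, I would fix $\eta > 0$, use the uniform H\"older bounds to choose $\delta > 0$ so small that the oscillations of both $M^\cdot(\Gamma_\eps, f)$ (uniformly in $\eps$) and $M^\cdot(\Gamma, f)$ on any interval of length $\delta$ are less than $\eta/3$ except on an event of probability $\leq \eta$, and then pick a $\delta$-net $\{\gamma_i\}_{i=1}^N$ in $I$. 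Proposition \ref{prop:basic} applied at each $\gamma_i$ gives $\max_i |M^{\gamma_i}(\Gamma_\eps, f) - M^{\gamma_i}(\Gamma, f)| < \eta/3$ with high probability for small $\eps$, and the triangle inequality then controls $\|M^\cdot(\Gamma_\eps, f) - M^\cdot(\Gamma, f)\|_\infty$.

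The main technical point is the pointwise bound on $h$: one has to see that the $\Delta^2$-cancellation at the level of exponents survives exponentiation, and that the remaining polynomial-in-$\C_\eps$ prefactor is still integrable against the singular kernel $|z-w|^{-\bar\gamma^2}$. Both are possible precisely because $I$ lies strictly inside the $L^2$-regime $(-\sqrt 2, \sqrt 2)$, giving the margin $\bar\gamma^2 < 2$; near the critical boundary this direct argument would break down and a finer analysis would be needed.
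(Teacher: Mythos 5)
Your proposal is correct and follows essentially the same route as the paper: the identical second-moment computation reducing to $e^{\gamma_1^2 x}+e^{\gamma_2^2 x}-2e^{\gamma_1\gamma_2 x}$ with the $(\gamma_1-\gamma_2)^2$ cancellation, the uniform-in-$\eps$ bound $C|\gamma_1-\gamma_2|^2$ using $\bar\gamma^2<2$ on the compact interval, Kolmogorov's criterion for H\"older continuity, and tightness plus pointwise convergence to get convergence in $C(I)$. Your explicit $\delta$-net argument for the last step is just a spelled-out version of the paper's appeal to tightness in H\"older spaces.
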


\begin{proof}
There are probably several ways to see this. A direct way is to just observe that there is some $C >0$ such that for all $\eps > 0$ and for all $\gamma, \tilde \gamma \in I$, we have
\begin{equation}\label{eq:Hldr}
\E \left[(M^\gamma(\Gamma_\eps, f) - M^{\tilde \gamma}(\Gamma_\eps, f))^2\right] \leq C|\gamma -\tilde \gamma|^2.
\end{equation}
The calculation is very similar to the one above, but let us sketch it here for completeness. Suppose first $\eps >0$ and let $\tilde \gamma = \gamma + \delta$. Writing $e_{\gamma_1, \gamma_2}(z,w) := e^{\gamma_1 \Gamma_\eps(z) + \gamma_2 \Gamma_\eps{w} - \frac{\gamma_1^2}{2}\E \Gamma_\eps(z)^2 - \frac{\gamma_2^2}{2}\E \Gamma_\eps(w)^2}$, we get that the LHS of \eqref{eq:Hldr} equals
$$\int_{D \times D} dzdwf(z)f(w)\E\left( e_{\gamma, \gamma}(z,w) - 2e_{\gamma, \gamma + \delta}(z,w) + e_{\gamma + \delta, \gamma + \delta}(z,w)\right).$$
Now $\E e_{\gamma_1, \gamma_2}(z,w) = e^{\gamma_1 \gamma_2 \C_\eps(z,w)}$ and thus we can rewrite this as 
$$\int_{D \times D} dzdwf(z)f(w) e^{(\gamma+\delta)^2\C_\eps(z,w)}\left(1+e^{-\delta(2\gamma +\delta) \C_\eps(z,w)} - 2e^{-\delta(\gamma+\delta) \C_\eps(z,w)}\right).$$
Using $1- x \leq e^{-x} \leq 1 - x + x^2/2$ we further bound this from above by
$$c\delta^2 \int_{D \times D} dzdwf(z)f(w) e^{(\gamma+\delta)^2\C_\eps(z,w)}\C_\eps(z,w)^2.$$
But as long as $\delta + \gamma \in (-\sqrt{2}, \sqrt{2})$ the integral is uniformly bounded.  Thus we obtain the estimate \eqref{eq:Hldr} for $\eps >0$ with $C$ uniform. By dominated convergence it also holds in the limit (i.e. with $\eps = 0$), giving H\"older continuity in $\gamma$ for (a modification of) $M^\gamma(\Gamma,f)$ using the Kolmogorov criteria. Moreover, \eqref{eq:Hldr} also gives tightness in H\"older continuous functions with H\"older exponent less than $1/2$ and thus the convergence.
\end{proof}
 
The statement of the proposition is certainly not optimal. In fact also any derivative of $M^\gamma(\Gamma_\eps,f)$ in $\gamma$ converges as a process. One way to see this is to use again a direct calculation. Let us sketch it for the first derivative. Denote $D^\gamma(\Gamma_\eps, dz) = \frac{d}{d\gamma} M^\gamma(\Gamma_\eps,dz)$. We have 
$$D^\gamma(\Gamma_\eps, dz) = (\Gamma_\eps(z) - \gamma \E \Gamma_\eps^2(z))e^{\gamma \Gamma_\eps(z) - \frac{\gamma^2}{2}\E \Gamma_\eps(z)^2}dz.$$
A Gaussian calculation, interpreting $e^{\gamma (\Gamma_\eps(z)+\Gamma_\eps(w))) - \frac{\gamma^2}{2}\E (\Gamma_\eps(z)+\Gamma_\eps(z))^2}$ as a Cameron-Martin weight, shows that for $|\gamma| < \sqrt{2}$, 
$$\E\left[ D^\gamma(\Gamma_\eps, f)^2\right] = \int_{D \times D} dzdwf(z)f(w)\gamma^2\C_\eps^2(z,w)e^{\gamma^2 \C_\eps(z,w)}.$$
Using $\C_\eps(z,w) \leq -\log |z-w|$, we see that the second moment is bounded uniformly in $\eps$. As for the orthogonal sequence $\Gamma^n$, we see that $D^\gamma(\Gamma^n,f)$ is still a martingale. It is no longer positive, but as it is uniformly integrable, it still converges almost surely and in $L^1$. Notice that in this case $D^\gamma(\Gamma)$ is no longer a measure, but lives in the space of distributions.

We can further write $\E \left[(D^\gamma(\Gamma_\eps, f) - D^{\tilde \gamma}(\Gamma_\eps, f))^2\right]$ as
$$\int_{D \times D}dzdwf(z)f(w)\gamma(\gamma+\delta)\C_\eps^2(z,w)e^{(\gamma+\delta)^2\C_\eps(z,w)}\left(-2e^{-\gamma(\gamma + \delta) \C_\eps^2(z,w)} + (1-\frac{\delta}{\gamma + \delta})e^{-\delta(2\gamma + \delta) \C_\eps^2(z,w)} + (1+\frac{\delta}{\gamma})\right).$$
Bounding the exponential as above, we again conclude that there is some $C >0$ such that for all $\eps$ sufficiently small and for all $\gamma, \tilde \gamma \in I$, we have
\begin{equation}\label{eq:HldrD}
\E \left[(D^\gamma(\Gamma_\eps, f) - D^{\tilde \gamma}(\Gamma_\eps, f))^2\right] \leq C|\gamma -\tilde \gamma|^2.
\end{equation}
Hence the derivative as a process in $\gamma$ also converges in the space of H\"older regular functions on any compact subinterval of $(-\sqrt{2}, \sqrt{2})$. In particular we see that the derivative of $M^\gamma(\Gamma,f)$ exists in this region and is equal to $D^\gamma(\Gamma,f)$.

\begin{rem} There are a few remarks to be made:
\begin{itemize} 
\item When one considers a complex exponent $\gamma$, then one can prove analyticity of $M^\gamma(\Gamma,f)$ in $\gamma$ even with slightly fewer calculations: analyticity at the approximate level, uniform convergence and Cauchy integral formula will give analyticity at the limit.
\item Using the techniques of Section \ref{sec:RM}, these calculations extend to the whole subcritical range. 
\item In fact one define a continuous process in the space of finite measures. This follows by the fact that atomless measures are for example characterized by testing them against all squares with dyadic co-ordinates.
\end{itemize}
\end{rem}

Finally, there is also a rather cute corollary:

\begin{cor}
Let $I$ be some open interval around $0$. Then in the coupling $\left(\Gamma,(M^{\gamma}(\Gamma,dz))_{\gamma \in I}\right)$, where $(M^{\gamma}(\Gamma,dz))_{\gamma \in I}$ is defined via an approximation as above, we have that $\Gamma$ is measurable w.r.t. $(M^{\gamma}(\Gamma),dz)_{\gamma \in I}$.
\end{cor}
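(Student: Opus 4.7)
The plan is to recover $\Gamma$ as a Schwartz distribution by reading off the values $(\Gamma,f)$ for a countable family of test functions $f$ from the $\gamma$-derivative of $\gamma\mapsto M^\gamma(\Gamma,f)$ at $\gamma=0$. The previous proposition has already done essentially all the analytic work; what remains is to identify what this derivative actually is.

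First, I would compute the derivative at the approximate level. By the explicit formula for $D^\gamma(\Gamma_\eps,dz)$ derived above, specialising to $\gamma=0$ gives
$$D^0(\Gamma_\eps,f) = \int_D f(z)\,\Gamma_\eps(z)\,dz = (\Gamma_\eps,f).$$
The previous proposition asserts that $D^\gamma(\Gamma_\eps,f)\to D^\gamma(\Gamma,f)$ in probability as a continuous process on any compact subinterval of $(-\sqrt2,\sqrt2)$; in particular $D^0(\Gamma_\eps,f)\to D^0(\Gamma,f)$ in probability. On the other hand, the definition of the GFF guarantees $(\Gamma_\eps,f)\to (\Gamma,f)$ in $L^2(\P)$ for any smooth test function $f$. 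Combining the two limits identifies
$$D^0(\Gamma,f) = (\Gamma,f) \quad\text{almost surely}.$$

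Next, I would argue measurability. Since $\gamma\mapsto M^\gamma(\Gamma,f)$ is differentiable at $0$ (indeed H\"older continuous with a well-defined derivative process on $I$), its derivative at $0$ is a limit of divided differences and hence a Borel-measurable functional of the path. Therefore $(\Gamma,f)$ is measurable with respect to the $\sigma$-algebra generated by $(M^\gamma(\Gamma,f))_{\gamma\in I}$, and a fortiori with respect to $\sigma\bigl((M^\gamma(\Gamma,dz))_{\gamma\in I}\bigr)$.

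Finally, I would fix a countable family $\{f_n\}_{n\geq 1}$ of smooth compactly supported test functions that is dense in $C_c^\infty(D)$ (equivalently, a countable dense subset of the appropriate Sobolev test space). Applying the previous step to each $f_n$ shows that every $(\Gamma,f_n)$ is measurable w.r.t.\ $\sigma\bigl((M^\gamma(\Gamma,dz))_{\gamma\in I}\bigr)$; since the GFF $\Gamma$ as a random distribution is uniquely determined by the collection of pairings $\{(\Gamma,f_n)\}_n$, the full field $\Gamma$ is measurable with respect to $(M^\gamma(\Gamma,dz))_{\gamma\in I}$. The only conceptual point worth emphasising is that the statement requires a single coupling in which all the measures $M^\gamma$ are built from one simultaneous approximation of $\Gamma$, so that differentiation in $\gamma$ genuinely commutes with the approximation $\eps\downarrow 0$; this is exactly the content of the preceding proposition and is the reason no further work is needed.
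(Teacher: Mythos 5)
Your proposal is correct and follows essentially the same route as the paper: identify $D^0(\Gamma,f)=(\Gamma,f)$ by passing the approximate identity $D^0(\Gamma_\eps,f)=(\Gamma_\eps,f)$ to the limit, observe that the derivative of the path $\gamma\mapsto M^\gamma(\Gamma,f)$ at $0$ is measurable with respect to the family of measures, and conclude via a countable family of test functions. Your use of divided differences to get measurability of the derivative is a slightly more direct phrasing of the paper's appeal to continuity of the derivative process, but the substance is the same.
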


\begin{proof}
To see this first notice that for any fixed continuous $f$, $D^0(\Gamma_\eps,f) = (\Gamma_\eps,f)$ and by convergence in probability $D^0(\Gamma,f) = (\Gamma,f)$. Now as the derivative process is continuous in the limit, it follows that $D^{\gamma}(\Gamma,f) \to D^0(\Gamma,f)$ almost surely as $\gamma \to 0$. Finally, the derivative process $D^{\gamma}(\Gamma,f)$ is measurable w.r.t $M^\gamma(\Gamma,f)$. As this all holds for a countable number of $f$, and we have tightness in the space of distributions, the corollary follows.
\end{proof}

In Section \ref{sec:meas} we will prove a stronger result saying that the underlying GFF is measurable w.r.t. the GMC measure for a fixed $\gamma$.

\subsection*{References to the literature}

The second moment argument goes back to \cite{HK}, where the authors use $\exp(\gamma \Gamma)$ as an interaction term for free fields and it also appears in Kahane's article \cite{Kah} for general dimensions and general base measures. The idea to use the interplay between martingale sequences and circle-average approximations is used in \cite{DS}, it is further developed in \cite{Ber} to prove uniqueness for mollifier approximations, and is finally used in \cite{APS} to prove convergence to the GMC measure for certain cascade-like approximations. 

The convergence of the whole process in 1D goes back at least to \cite{Bar}, and is also used in \cite{RCW}. In both cases the authors in fact work with complex parameter $\gamma$ and prove analyticity in the disk around zero using Cauchy's integral formula. The measurability of the GFF from the GMC measures defined around $0$ can be probably considered folklore.

\section{Rooted measures, thick points and typical points}\label{sec:RM}

Let $\gamma < \sqrt{2}$ be fixed. Suppose we know how to construct the measure $M^\gamma(\Gamma,dz)$ and you pick a ``typical point'' according to this measure. How does the measure look around this point? 

A neat way to study this question is to consider the rooted (also known as Peyri\`ere) measures, i.e. measures together with a point randomly sampled from the measure. In the case of the GMC measure there is a very explicit definition and description of these measures in terms of the underlying field.

Indeed, let $\Gamma_\eps$ be some approximation as before, which for concreteness we could fix to be the circle-average approximation. Then an approximate rooted measure is the probability law $\P^*_\eps(dz,d\Gamma^*_\eps)$ on $D\times \mathcal{D}(D)$, where $\mathcal{D}(D)$ is the space of distributions on $D$. 

Its law is given by $$\P^*_\eps(dz,d\Gamma^*_\eps) = \frac{1}{\Leb (D)}e^{\gamma \Gamma^*_\eps - \frac{\gamma^2}{2}\E {\Gamma^*_\eps}^2}dzd\Gamma^*_\eps.$$ Here $dz$ denotes Lebesgue measure on $D$ and $d\Gamma^*_\eps = d\Gamma^D_\eps$ is the law of the GFF approximation. Observe that
\begin{itemize}
\item The marginal law on $D$ is proportional to Lebesgue measure;
\item the marginal law on $\mathcal{D}(D)$ is proportional to $M_\eps(D)d\Gamma_\eps$
\item the conditional (probability) law on $D$, conditioned on $\Gamma^*_\eps$ is equal to $\frac{1}{M_\eps(D)}M_\eps(dz)$;
\item the conditional (probability) law on $\mathcal{D}(D)$, conditioned on $z$ is by the Cameron-Martin theorem equal to $\Gamma_\eps(w) + \gamma \C_\eps(z,w).$
\end{itemize}
Thus, we can sample an instance of the approximate rooted measure by:
\begin{enumerate}
\item first sampling $\Gamma^*_\eps$ from $M_\eps(D)d\Gamma^*_\eps$, then sampling a point according to the probability measure $M_\eps(dz)/M_\eps(D)$.
\end{enumerate}
We see from this description that as $M_\eps(D)$ converges to $M(D)$ in probability, $\P^*_\eps$ converges in law to a rooted measure $\P^*$.  

There is however, also another way of sampling an instance of the approximate rooted measure:
\begin{enumerate}
\setcounter{enumi}{1}
\item we first sample $z$ from Lebesgue measure and then an element from $\mathcal{D}(D)$ with the law of $\Gamma_\eps(w) + \gamma \C_\eps(z,w).$
\end{enumerate}
Combining the descriptions (1) and (2) of the approximate rooted measure, we get the following lemma: 
\begin{lemma}[Rooted measure]\label{lem:rm}
For any $\eps \geq 0$, and any continuous bounded functional $F$ on $\overline{D} \times \mathcal{D}(D)$, we have
$$\E \int_D F(z,\Gamma_\eps) M_\eps(dz) = \E \int_D F(z, \Gamma_\eps + \gamma \C_\eps(z,\cdot))dz,$$
where the expectation is w.r.t. $\Gamma$. Moreover, if $M_\eps(dz)$ converges in the space of finite measures on $D$ as $\eps \to 0$, the same holds with $\eps = 0$ and $\C_0(z,w) = \G_D(z,w)$. 
\end{lemma}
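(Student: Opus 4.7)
The plan is to prove the identity for fixed $\eps > 0$ via Fubini and a Gaussian (Cameron--Martin) tilt, then pass to the $\eps = 0$ case using the assumed weak convergence.

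For $\eps > 0$, Fubini writes the LHS as
\[
\int_D \E\bigl[F(\Gamma_\eps, z)\, e^{\gamma \Gamma_\eps(z) - \tfrac{\gamma^2}{2}\E \Gamma_\eps(z)^2}\bigr]\, dz.
\]
The key observation is the standard Gaussian tilt: for a centered Gaussian process $(\Gamma_\eps(w))_w$ and a single coordinate $\Gamma_\eps(z)$, weighting by $e^{\gamma \Gamma_\eps(z) - \tfrac{\gamma^2}{2}\Var(\gamma \Gamma_\eps(z))}$ changes the law of the whole process by the deterministic shift $w \mapsto \mathrm{Cov}(\gamma \Gamma_\eps(z), \Gamma_\eps(w)) = \gamma \C_\eps(z, w)$. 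This is the classical finite-dimensional Cameron--Martin identity applied to cylindrical approximations of the functional $F(\cdot, z)$, which extends to bounded continuous $F$ by density together with the Cameron-Martin shift for the GFF stated earlier. The inner expectation therefore equals $\E[F(\Gamma_\eps + \gamma \C_\eps(z, \cdot), z)]$, and a second application of Fubini recovers the RHS.

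For the passage $\eps \to 0$, I would use joint convergence $(\Gamma_\eps, M_\eps) \to (\Gamma, M)$. The marginal convergence is given (distributional for $\Gamma_\eps$, finite-measure topology for $M_\eps$), and since both are deterministic functions of the same underlying $\Gamma$, a Skorokhod coupling upgrades this to almost-sure joint convergence. Continuity and boundedness of $F$ together with dominated convergence then carry the LHS to $\E\int_D F(\Gamma, z)\, M(dz)$. Symmetrically for the RHS, one verifies that for each fixed $z$, $\C_\eps(z, \cdot) \to \G_D(z, \cdot)$ in the space of distributions (this follows from the convergence of $\Gamma_\eps$ to $\Gamma$ via the covariance characterisation), so $\Gamma_\eps + \gamma \C_\eps(z, \cdot) \to \Gamma + \gamma \G_D(z, \cdot)$ as distributions, and the $\|F\|_\infty$ bound together with dominated convergence in $z$ against Lebesgue completes the limit.

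The main obstacle is the joint limit on the LHS: the functional $F$ depends on the full distributional object $\Gamma_\eps$, which converges only in a weak topology (e.g.\ $\mathcal H^{-\eps}$). For the lemma as stated one implicitly restricts $F$ to be continuous with respect to this topology, or first verifies the identity for cylindrical $F$ (functionals of $(\Gamma_\eps, \rho_1), \ldots, (\Gamma_\eps, \rho_k)$ for smooth $\rho_i$) where both sides are unambiguous, and then extends. The Cameron--Martin step itself is routine once we recall that $\Gamma_\eps$ is a pointwise-defined continuous Gaussian field whose covariance with $\gamma \Gamma_\eps(z)$ is precisely $\gamma \C_\eps(z, \cdot)$.
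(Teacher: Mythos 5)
Your proof is correct and is essentially the paper's argument: the identity for $\eps>0$ is exactly the Cameron--Martin/Girsanov tilt identifying the law of $\Gamma_\eps$ weighted by $e^{\gamma\Gamma_\eps(z)-\frac{\gamma^2}{2}\E\Gamma_\eps(z)^2}$ with that of $\Gamma_\eps+\gamma\C_\eps(z,\cdot)$, which the paper packages as the two disintegrations (over $z$ and over the field) of the rooted measure $\P^*_\eps$. Your treatment of the $\eps\to 0$ limit via a.s.\ joint convergence, continuity of $F$ in the distributional topology, and $\C_\eps(z,\cdot)\to\G_D(z,\cdot)$ is, if anything, more careful than the paper's brief remark.
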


Notice that in the other direction, if there is some random measure $M(\Gamma,dz)$ such that for all continuous functionals $F$ on $D\times \mathcal{D}(D)$
$$\E \int_D F(z, \Gamma) M(\Gamma, dz) = \E \int_D F(z, \Gamma + \gamma \G_D(z,\cdot))dz,$$
then this uniquely characterizes the rooted measure, but also the measure $M$: indeed, the right hand side does not depend on $M(\Gamma,dz)$ at all. We will come back to this in the beginning of Section \ref{sec:intr}, where we state it as Proposition \ref{prop:chr1}. For now we just observe the following: if we have tightness in law for $M(\Gamma_\eps,dz)$, then this characterization shows that all subsequential limits have to agree, implying convergence in law. 

Finally let us return to the question of the beginning of the section. As almost surely $0 < M(D) < \infty$, the law of $\Gamma^*$ is absolutely continuous w.r.t. the law of $\Gamma$. Thus the almost sure properties of the field around a ``typical point'' sampled from $M(dz)$ can be studied by looking at the conditional law of $\P^*$ on $d\Gamma^*$, conditioned on $z$. By the above, we know that this is given by the law of $\Gamma + \gamma \G_D(z, \cdot)$. 

We know from Section \ref{sec:prelim} that around a Lebesgue-typical point the circle-average $\Gamma_\eps(z)$ parametrized using $-\log \eps + \log \CR(z,\partial D)$ has the law of a standard Brownian motion. It now follows that under $\P^*$, conditioned on $z$, the circle-average has the law of a Brownian motion with drift $- \gamma \log \eps$. In particular, whereas for a Lebesgue-typical point $\frac{\Gamma_\eps(z)}{- \log \eps} \to 0$, under $\P^*$ almost surely $\frac{\Gamma^*_\eps(z)}{- \log \eps} \to \gamma$ as $\eps \to 0$. Such points are called $\gamma-$thick points, and more generally if the above limit is equal to $a$, they are called $a-$thick points. We thus conclude that:

\begin{claim}
Almost surely a point picked according to the GMC measure $M^\gamma(\Gamma,dz)$ is a $\gamma-$thick points.
\end{claim}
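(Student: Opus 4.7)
The argument is essentially already contained in the two paragraphs preceding the claim; I organize it into three steps plus a short measurability remark.

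First, I pass to the rooted measure $\P^*$. Since $M(\D) \in (0, \infty)$ almost surely, the claim is equivalent, via Fubini and the disintegration of $\P^*$, to the statement that under $\P^*$ the marked point $z$ is almost surely a $\gamma$-thick point of $\Gamma^*$. By Lemma \ref{lem:rm}, conditional on $z$, the field $\Gamma^*$ has the law of $\Gamma + \gamma \G_D(z, \cdot)$. Hence it suffices to prove that for every fixed $z \in \D$, almost surely the field $\Gamma + \gamma \G_D(z, \cdot)$ has $z$ as a $\gamma$-thick point.

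Second, I compute the $\eps$-circle average of $\gamma \G_D(z, \cdot)$ at $z$. Splitting $\G_D(z, w) = -\log|z-w| + g(z, w)$, the circle average of $-\log|z-w|$ over $\{|w-z| = \eps\}$ equals $-\log \eps$, while $g(z, \cdot)$ is harmonic in $D$, so by the mean value property its $\eps$-circle average at $z$ equals $g(z, z) = \log \CR(z, \partial D)$. Therefore
$$\bigl(\Gamma + \gamma \G_D(z, \cdot)\bigr)_\eps(z) = \Gamma_\eps(z) + \gamma\bigl(-\log \eps + \log \CR(z, \partial D)\bigr).$$

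Third, I invoke the strong law of large numbers for Brownian motion. By the discussion at the end of Section \ref{sec:prelim}, for each fixed $z \in \D$ the process $t \mapsto \Gamma_{\CR(z,\partial D) e^{-t}}(z)$, $t \geq 0$, is a standard Brownian motion up to its starting value. The classical SLLN gives $B_t/t \to 0$ almost surely, i.e.\ $\Gamma_\eps(z)/(-\log \eps) \to 0$ as $\eps \to 0$. Dividing the previous display by $-\log \eps$ and using that $\log \CR(z, \partial D)$ is a finite constant for fixed $z \in \D$,
$$\frac{(\Gamma + \gamma \G_D(z,\cdot))_\eps(z)}{-\log \eps} = \frac{\Gamma_\eps(z)}{-\log \eps} + \gamma + \gamma\,\frac{\log \CR(z, \partial D)}{-\log \eps} \longrightarrow \gamma$$
almost surely, which is exactly the $\gamma$-thick-point condition.

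The only obstacle, and it is a very mild one, is a measurability check needed to promote the fibrewise almost sure statement of the three steps above to a $\P^*$-a.s.\ statement via Fubini: one must verify that the $\gamma$-thick-point event is jointly Borel in $(z, \Gamma^*)$. This is routine since each circle average $(z, \Gamma^*) \mapsto \Gamma^*_\eps(z)$ is jointly measurable and the relevant limit can be extracted along the countable sequence $\eps_n = 1/n$.
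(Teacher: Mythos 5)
Your proposal is correct and follows the paper's own argument essentially verbatim: pass to the rooted measure, use that conditionally on $z$ the field is $\Gamma + \gamma \G_D(z,\cdot)$, observe that the circle average of $\gamma\G_D(z,\cdot)$ contributes the drift $-\gamma\log\eps$ (plus a bounded constant), and conclude by the law of large numbers for the Brownian motion $\eps \mapsto \Gamma_\eps(z)$. The extra details you supply --- the explicit circle-average computation of the Green's function and the measurability check for Fubini --- are exactly the steps the paper leaves implicit, and they are carried out correctly.
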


\subsection{Removing thick points to obtain uniform integrability of $M^\gamma(f)$ and dimension of the support}\label{sec:thickpoints}

A very neat way to deal with the blow-up of the second moment of $M^\gamma(D)$ for $\gamma \geq \sqrt{2}$ is to understand that 
\begin{itemize}
\item this blow-up stems from the contribution of points that are strictly more than $\gamma-$thick and that, 
\item on the other hand, these points do not contribute to the measure $M^\gamma(z)$ in the $L^1$ sense. 
\end{itemize}
More precisely we define:
\begin{defn}
For $\eps < \eps'$ and $a \in \R$, we say that $z$ is at least $a-$thick on $(\eps,\eps')$ if $\frac{\Gamma_\nu(z)}{-\log \nu} > a$ for some $\nu \in (\eps,\eps')$ and denote this event by $B^a_{(\eps, \eps')}(z)$. Denote the complementary event for $z$, i.e. that throughout $(\eps,\eps')$, we have $\frac{\Gamma_\nu(z)}{-\log \nu} \leq a$ by $G^a_{(\eps, \eps')}(z)$. 
\end{defn}

Then the two following lemmas give the uniform integrability of $M^\gamma(f)$. The first says that we can neglect at least $(\gamma+\delta)$-thick points in $L^1$ calculations, and the second says that without such points the second moments are finite.

\begin{lemma}[Neglecting too thick points]\label{lem:1pt}
Let $\gamma \in (-2,2)$. Suppose $f$ is non-negative, continuous and supported on $R\overline{\D}$ for some $R < 1$ and let $\eps' < 1-R, \delta > 0$. Then 
$$\E \int_\D f(z)\I{B^{\gamma+\delta}_{(\eps, \eps')}(z)}e^{\gamma \Gamma_\eps(z) - \frac{\gamma^2}{2}\E \Gamma_\eps^2(z)}dz = o_{\eps',\delta}(1)$$
uniformly over all $\eps < \eps'$.
\end{lemma}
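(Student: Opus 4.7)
The plan is to reduce the estimate to a one-dimensional Brownian-motion tail bound via the rooted-measure/Cameron-Martin identity of Lemma~\ref{lem:rm}. Applying that identity (after approximating the indicator by bounded continuous functionals of finitely many circle averages, so that the hypotheses literally apply) with $F(\Gamma, z) = f(z) \I{B^{\gamma+\delta}_{(\eps, \eps')}(z)}$, the left-hand side becomes $\int_\D f(z)\, \P(A_z)\, dz$, where $A_z$ is the event $B^{\gamma + \delta}_{(\eps, \eps')}(z)$ evaluated on the Cameron-Martin shifted field $\Gamma + \gamma h_{\eps, z}$, with $h_{\eps, z}(w) := \int \G_D(w, w') \rho_\eps^z(dw')$ being the covariance of $\Gamma_\eps(z)$ with $\Gamma(w)$.

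The second step is to compute how the shift acts on the relevant circle averages. For $\nu \in (\eps, \eps')$ and $z \in R\D$ (so $\nu \leq \eps' < 1-R \leq d(z, \partial D)$), a direct Green's function calculation --- using that the circle average of $-\log|\cdot - w'|$ over a circle of radius $\nu$ around $z$ equals $-\log\nu$ whenever the singularity $w'$ lies inside the disk (which is the case here since $|z - w'| = \eps < \nu$), together with the mean value property for the harmonic part $g(z, \cdot)$ --- gives
$$\int h_{\eps, z}(w)\, \rho_\nu^z(dw) = -\log\nu + \log \CR(z, \partial D).$$
The crucial point is that this drift grows exactly at rate $\gamma$ in $-\log\nu$: after cancelling $(\gamma + \delta)(-\log\nu)$ in the definition of $(\gamma+\delta)$-thickness, the shifted thickness event simplifies to
$$A_z = \left\{ \exists \nu \in (\eps, \eps') : \Gamma_\nu(z) > \delta(-\log\nu) - \gamma \log \CR(z, \partial D) \right\},$$
where $\Gamma_\nu$ now denotes the unshifted circle average.

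The third step invokes the Brownian parametrization: letting $t = -\log\nu + \log \CR(z, \partial D)$, the process $B_t := \Gamma_\nu(z)$ is a standard Brownian motion (see Section~\ref{sec:prelim}), and since $z$ ranges over the bounded set $R\D$ the quantity $\log \CR(z, \partial D)$ is bounded by some $C_0 = C_0(R)$. Thus $A_z$ is contained in $\{ \exists t \geq T_{\eps'} : B_t - \delta t > -K \}$ for some uniform $K = K(\gamma, \delta, R)$ and $T_{\eps'} = -\log\eps' - C_0 \to \infty$ as $\eps' \to 0$. I would bound this probability in a standard way: the strong Markov property at $T_{\eps'}$ decomposes $\sup_{t \geq T_{\eps'}}(B_t - \delta t)$ into $(B_{T_{\eps'}} - \delta T_{\eps'})$ plus the running maximum of an independent drift-$(-\delta)$ Brownian motion (the latter being exponentially distributed with rate $2\delta$), and the probability that this sum exceeds $-K$ is bounded by a Gaussian tail for $B_{T_{\eps'}}$ plus an exponential tail, both of which are $o_{\eps'}(1)$ uniformly in $z$ and in $\eps \leq \eps'$. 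Integrating against $f$, which is bounded on the bounded set $R\D$, then gives the claim.

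The main obstacle I expect is purely in the bookkeeping of the Cameron-Martin shift at scales $\nu \in (\eps, \eps')$: one must verify that the shifted drift is $\gamma(-\log \nu + \log \CR)$ and not some weaker expression like $\gamma(-\log\max(\eps, \nu) + \log\CR)$, as otherwise the cancellation of $\gamma$ in the threshold would be incomplete and the BM would no longer have a favourable negative drift. The harmonicity argument above confirms the clean expression and crucially relies on the regime $\eps < \nu$, so that the $\eps$-scale singularities of $\G_D(\cdot, w')$ sit strictly inside the $\nu$-circle. Once this is pinned down, the Brownian tail bound and final integration are entirely standard.
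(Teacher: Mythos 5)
Your proposal is correct and follows essentially the same route as the paper: apply the Cameron--Martin/rooted-measure identity to turn the weighted expectation into the probability of the thickness event for the shifted field, observe that the shift adds drift exactly $\gamma$ per unit of $-\log\nu$ to the circle-average process, and conclude with the standard tail bound for a Brownian motion crossing a line of slope $\delta>0$ after a large time $-\log\eps'$. You simply carry out in detail (the exact computation of the shifted circle average, the uniformity in $z$ and $\eps$) what the paper leaves as a two-line sketch.
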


This lemma follows easily from the Cameron-Martin theorem and a Gaussian calculation. Indeed, when taking $f = 1$ for simplicity, the LHS is up to a constant equal to $\P^*(B^{\gamma+\delta}_{(\eps, \eps')}(z)).$
But this is equal to the probability that a Brownian motion with drift $\gamma$ stays above $\gamma + \delta$ throughout the interval $[-\log \eps', -\log \eps]$, which is of order $o_{\eps'}(1)$ for any fixed $\delta > 0$.

\begin{lemma}[Thinned out second moments exit]\label{lem:2ndme}
Let $\gamma \in (-2,2)$. Suppose $f$ is non-negative, continuous and supported on $R\overline{\D}$ for some $R < 1$ and let $\eps' < 1-R$. Then for $\delta$ small enough depending only on $\gamma$, there is some $C > 0$ such that
$$\E \left[(\int_\D f(z)\I{G^{\gamma+\delta}_{(\eps, \eps')}(z)}e^{\gamma \Gamma_\eps(z) - \frac{\gamma^2}{2}\E \Gamma_\eps^2(z)}dz)^2\right] < C$$
for all $\eps < \eps'$.
\end{lemma}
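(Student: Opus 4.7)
The plan is to expand the square, apply a Cameron-Martin shift, and show that under the shifted measure the good event becomes so rare that it exactly compensates for the divergence of $e^{\gamma^2 \C_\eps(z,w)}$ near the diagonal.

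First I would expand the square and push expectations inside to obtain
\begin{equation*}
\int_{\D \times \D} f(z) f(w)\, \E\!\left[\I{G^{\gamma+\delta}_{(\eps,\eps')}(z) \cap G^{\gamma+\delta}_{(\eps,\eps')}(w)}\, e^{\gamma(\Gamma_\eps(z) + \Gamma_\eps(w)) - \tfrac{\gamma^2}{2}(\Var \Gamma_\eps(z) + \Var \Gamma_\eps(w))}\right] dz\, dw.
\end{equation*}
Exactly as in the vanilla second moment calculation, the Gaussian weight has mean $e^{\gamma^2 \C_\eps(z,w)}$, and by Cameron-Martin (Girsanov) the remaining probability is computed under the tilted law $\tilde \P_{z,w}$ in which $\Gamma$ is shifted by $\gamma(\C_\eps(z,\cdot) + \C_\eps(w,\cdot))$. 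So the integral rewrites as
\begin{equation*}
\int_{\D \times \D} f(z) f(w)\, e^{\gamma^2 \C_\eps(z,w)}\, \tilde \P_{z,w}\!\left[G^{\gamma+\delta}_{(\eps,\eps')}(z) \cap G^{\gamma+\delta}_{(\eps,\eps')}(w)\right] dz\, dw.
\end{equation*}

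The key step is to bound the probability. Under $\tilde\P_{z,w}$, the circle average $\Gamma_\nu(z)$ with $\nu \in (\eps,\eps')$ has mean $\gamma \E[\Gamma_\nu(z)\Gamma_\eps(z)] + \gamma \E[\Gamma_\nu(z)\Gamma_\eps(w)]$, which is, up to $O(1)$, $\gamma(-\log \nu) + \gamma(-\log \max(|z-w|,\nu))$. In particular, whenever $\nu \geq |z-w|$ this mean is $\approx 2\gamma(-\log\nu)$, so that the good event $\Gamma_\nu(z) \leq (\gamma+\delta)(-\log \nu)$ forces the centered Gaussian to lie below $-(\gamma-\delta)(-\log \nu)$. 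Choosing the single time $t_\ast := -\log \max(|z-w|,\eps) \wedge (-\log \eps')^{-1}$… more precisely, evaluating at $\nu_\ast := \max(|z-w|,\eps) \in [\eps,\eps']$ (using the hypothesis $f$ is supported away from $\partial \D$ so the circle averages are defined and approximately Brownian), the Gaussian tail bound gives
\begin{equation*}
\tilde \P_{z,w}\!\left[G^{\gamma+\delta}_{(\eps,\eps')}(z)\right] \leq C\, e^{-\tfrac{(\gamma-\delta)^2}{2}(-\log \nu_\ast)} = C\, \nu_\ast^{(\gamma-\delta)^2/2}.
\end{equation*}

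Finally I would combine the two bounds. Since $\C_\eps(z,w) \leq -\log \max(|z-w|,\eps) + O(1) = -\log \nu_\ast + O(1)$, the full integrand is bounded by a constant times
$f(z) f(w)\, \nu_\ast^{-\gamma^2 + (\gamma-\delta)^2/2}$. Since $|\gamma| < 2$, for $\delta$ small enough depending only on $\gamma$ we have $\gamma^2 - (\gamma-\delta)^2/2 < 2$, so the resulting 2D integral
\begin{equation*}
\int_{\D \times \D} f(z)f(w)\, \max(|z-w|,\eps)^{-\gamma^2 + (\gamma-\delta)^2/2}\, dz\, dw
\end{equation*}
is bounded uniformly in $\eps < \eps'$. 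The main obstacle is the careful accounting around the transition regime $|z-w| \asymp \nu$: one needs to check that the mean of $\Gamma_\nu(z)$ under $\tilde \P_{z,w}$ is indeed $\approx 2\gamma(-\log \nu) + O(1)$ uniformly down to the relevant scale, and that the Gaussian deviation bound at the single point $\nu_\ast$ suffices (picking any smaller $\nu$ would weaken the bound). Apart from this, the computation is a direct analogue of the classical second-moment calculation, with the thinning event replacing the factor $e^{-(\gamma-\delta)^2 \cdot (-\log \nu_\ast)/2}$ that rescues integrability past $|\gamma| = \sqrt{2}$.
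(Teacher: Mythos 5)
Your proposal is correct and follows essentially the same route as the paper: expand the square, reinterpret the exponential as a Cameron--Martin weight, drop one of the two good events, and evaluate the remaining one at the single scale $\hat\eps=|z-w|\vee\eps$ to get the tail bound $\hat\eps^{(\gamma-\delta)^2/2}$ that beats $e^{\gamma^2\C_\eps(z,w)}\lesssim\hat\eps^{-\gamma^2}$. The only cosmetic difference is that the paper treats the region $|z-w|>\eps'/2$ by the trivial $O(1/\eps')$ bound rather than your unified formula (where $\nu_\ast=\max(|z-w|,\eps)$ exits $[\eps,\eps']$ and the good-event decay is not available), but that region is harmless since $C$ is allowed to depend on $\eps'$.
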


Uniform integrability of $M_\eps(f)$ then follows: indeed, first choose $\delta >0$ such that the thinned-out second moment is finite. Then given $\nu > 0$, we can choose by Lemma \ref{lem:1pt} an $\eps'$ such that $\E\left[M_\eps(f\I{B^{\gamma+\delta}_{(\eps, \eps')}})\right] \leq \nu / 2$. Finally, from Lemma \ref{lem:2ndme} we can choose $K > 0$ such that for $X = M_\eps(f\I{G^{\gamma+\delta}_{(\eps, \eps')}})$, we have $\E\left[X\I{X\geq K} \right]\leq \nu/2$. 

\begin{proof}[Proof of Lemma \ref{lem:2ndme}]
Write the LHS of the statement as
$$\E \left[(\int_{\D \times \D}f(z)f(w)dzdw \I{G^{\gamma+\delta}_{(\eps, \eps')}(z)}\I{G^{\gamma+\delta}_{(\eps, \eps')}(w)}e^{\gamma (\Gamma_\eps(z)+\Gamma_\eps(w) - \frac{\gamma^2}{2}(\E \Gamma_\eps^2(z)+\E \Gamma_\eps^2(w))}dz)\right].$$
Bringing the expectation inside the integral and then interpreting $e^{\gamma (\Gamma_\eps(z)+\Gamma_\eps(w))) - \frac{\gamma^2}{2}\E (\Gamma_\eps(z)+\Gamma_\eps(z))^2}$ as a Cameron-Martin weight, we can rewrite this expectation as
$$\int_{\D \times \D}f(z)f(w)dzdw e^{\gamma^2 \C_\eps(z,w)}\tilde \P(G^{\gamma+\delta}_{(\eps, \eps')}(z) \cap G^{\gamma+\delta}_{(\eps, \eps')}(w)).$$
Here under $\tilde \P$, for $u \in \D$, $\Gamma_\eps(u)$ has the law of $\Gamma_\eps(u) + \gamma \C_\eps(z,u) + \gamma \C_\eps(w,u)$.
We bound the integral over $|z-w| > \eps'/2$ just by $O(1/\eps')$. To bound the integral over $|z-w| \leq \eps'/2$ we first bound trivially
$$\tilde \P(G^{\gamma+\delta}_{(\eps, \eps')}(z) \cap G^{\gamma+\delta}_{(\eps, \eps')}(w)) \leq \tilde \P(G^{\gamma+\delta}_{(\eps, \eps')}(z)).$$
Let now $\hat \epsilon = |z-w|\vee \eps$. We bound $\tilde \P[G^\gamma_{(\eps, \eps')}(z)]$ further by \[\P\left[\Gamma_{\hat \eps}(z) + \gamma(\C_{\hat \eps}(z,z) + \C_{\hat \eps}(z,w)) \leq -(\gamma+\delta)\log \hat \eps\right].\]
Now as $\C_{\hat \eps}(z,z) + \C_{\hat \eps}(z,w) \geq -2\gamma\log \hat \eps -c$ for some constant $c$ uniform over $z,w \in R\D$, we see that this probability is bounded by the probability that a Gaussian of variance $-\log \hat \eps$ and mean $-2\gamma \log \hat \eps$ is smaller than $-(\gamma+\delta)\log\hat \eps$. This probability is of order ${\hat \eps}^{(\gamma - \delta)^2/2}$. Thus
$$e^{\gamma^2 \C_\eps(z,w)}\tilde \P(G^{\gamma+\delta}_{(\eps, \eps')}(z) \cap G^{\gamma+\delta}_{(\eps, \eps')}(w) ) \lesssim (\hat \eps)^{-\gamma^2 + (\gamma - \delta)^2/2}.$$
 And thus the integral over $ |z-w| \leq \eps'/2$ can be bounded by $$C'\int_{|z-w| \leq \eps'/2} \frac{1}{|z-w|^{\gamma^2 - (\gamma - \delta)^2/2}}.$$ But for $\delta$ small enough the exponent is smaller than $2$, the integral is of order $o_{\eps'}(1)$ and thus we obtain the claim. 
\end{proof}

To prove actual convergence of the measure for circle-average or mollifier approximations one would, as in Section \ref{sec:basic}, extend the second-moment calculations to prove that the thinned out version of $M(\Gamma_\eps,f)$ is Cauchy in $L^2$. Together with Lemma \ref{lem:1pt} this gives that $M(\Gamma_\eps,f)$ is Cauchy in $L^1$. 

For the orthogonal sequence $\Gamma^n$ the convergence is again easier. From de la Vall\'ee- Poussin theorem, and the fact that $\E \left[M_{\eps}(f) | \Gamma^n\right]$ converges to $M(\Gamma^n, f)$ as $\eps \to 0$ one can deduce uniform integrability also for the orthogonal sequence. (See end of Section \ref{sec:UI1} for a very similar argument transferring existence of positive moments from circle-average approximations to orthogonal approximation.) As $M(\Gamma^n,f)$ is a positive martingale, the convergence follows. 

As in the above lemma we work for convenience in $R\overline{\D}$, this a priori only gives convergence on the space of measures on $R\overline{\D}$ with $R < 1$. However, as $R < 1$ is arbitrary and $\E M(\Gamma,R\overline{\D}) = \Leb(R\overline{\D}) \to \Leb(\D)$ as $R \to 1$, one can deduce from monotone convergence that $\E M(\Gamma,\overline{\D}) = 1$. This allows one to extend the convergence to measures on $\overline{\D}$. Let us state this all as a theorem:

\begin{thm}\label{thm:basic}
Let $|\gamma| < 2$ and consider the approximating fields $\Gamma_\eps$ as in Theorem \ref{thm:GFFapp}. Then for all these approximate measures $$M_\eps(dz) = M^\gamma(\Gamma_\eps, dz) = e^{\gamma \Gamma_\eps - \frac{\gamma^2}{2}\C_\eps(z,z)}dz$$
converge as measures on $\overline{\D}$ weakly in probability to a unique non-trivial measure $M(dz)$.
\end{thm}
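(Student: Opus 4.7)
The plan is to apply Theorem \ref{thm:conv}, so it suffices to prove that $M_\eps(f) \to M(f)$ in probability for each fixed positive continuous $f$. To sidestep boundary issues I would first fix $R<1$ and assume $f$ is supported on $R\D$; after the argument succeeds on such test functions, the relation $\E M_\eps(R\D) = \Leb(R\D) \to \pi$ combined with monotone convergence will let me extend to all continuous $f$ on $\overline{\D}$ and identify $\E M(\D) = \pi$, ruling out any loss of mass at the boundary.

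The core step is uniform integrability of $M_\eps(f)$, obtained by combining the two lemmas of Section \ref{sec:thickpoints}. Pick $\delta>0$ small enough that Lemma \ref{lem:2ndme} applies and take $\eps' < 1-R$. Decompose
\[
M_\eps(f) = M_\eps\!\left(f\,\I{G^{\gamma+\delta}_{(\eps,\eps')}}\right) + M_\eps\!\left(f\,\I{B^{\gamma+\delta}_{(\eps,\eps')}}\right).
\]
The second term has $L^1$-norm $o_{\eps'}(1)$ uniformly in $\eps$ by Lemma \ref{lem:1pt}, while the first is bounded in $L^2$ uniformly in $\eps$ by Lemma \ref{lem:2ndme}. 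The standard $\eps'$-then-$K$ argument sketched immediately after Lemma \ref{lem:2ndme} then delivers uniform integrability of $(M_\eps(f))_{\eps>0}$.

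To upgrade uniform integrability to convergence I split by approximation. For the orthogonal sequence, $M(\Gamma^n,f)$ is a positive $\sigma(\Gamma^n)$-martingale, so uniform integrability gives a.s.\ and $L^1$ convergence. For circle-average or mollifier approximations, I would rerun the proof of Lemma \ref{lem:2ndme} with two different scales $\eps,\eps''\le \eps'$ simultaneously: the thinning bound on $\tilde\P(G^{\gamma+\delta}_{(\eps,\eps')}(z)\cap G^{\gamma+\delta}_{(\eps,\eps')}(w))$ carries over unchanged, while far from the diagonal ($|z-w|\ge 2\eps'$) the covariance kernels $\C_\eps$ and $\C_{\eps''}$ agree, so the cross-terms $\exp(\gamma^2\E\Gamma_\eps(z)\Gamma_{\eps''}(w))$ cancel against the diagonal terms up to a contribution that vanishes as $\eps,\eps''\to 0$. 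This shows $M_\eps(f\,\I{G^{\gamma+\delta}})$ is Cauchy in $L^2$, and adding Lemma \ref{lem:1pt} back in gives $M_\eps(f)$ Cauchy in $L^1$, hence convergent.

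Uniqueness of the limit across approximations and non-triviality I would then settle exactly as in Section \ref{sec:basic}: the identity $M(\Gamma^n, f) = \E[M(f)\mid \Gamma^n]$ together with the explicit check $\E[M_\eps(f)\mid \Gamma^n] \to M(\Gamma^n, f)$ forces every $L^1$-limit to equal $M(f)$, while the zero-one argument applied to $\{M(\Gamma,f)=0\}$, which is independent of every $\Gamma^n$, gives a.s.\ non-triviality (since $\E M(\D)=\pi>0$). The main obstacle is the Cauchy-in-$L^2$ step for the mollifier and circle-average approximations: the thinning bound must be combined with a careful cancellation between the diagonal and cross covariance kernels, which is the one genuinely new calculation beyond what Lemmas \ref{lem:1pt} and \ref{lem:2ndme} already supply.
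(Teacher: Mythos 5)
Your proposal is correct and follows the paper's own route essentially verbatim: uniform integrability from the good/bad thick-point decomposition (Lemmas \ref{lem:1pt} and \ref{lem:2ndme}), the two-scale Cauchy-in-$L^2$ computation for the thinned-out measure, positive-martingale convergence for the orthogonal sequence, uniqueness via conditioning on $\Gamma^n$, and the $R\to 1$ monotone-convergence extension to all of $\D$. The only cosmetic difference is that the paper transfers uniform integrability to the orthogonal sequence from the circle-average family via de la Vall\'ee-Poussin rather than asserting it directly, but your closing identification $M(\Gamma^n,f)=\E\left[M(f)\mid\Gamma^n\right]$ makes that step automatic.
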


The calculation in the proof of Lemma \ref{lem:2ndme} also says that for any $a < 2-\gamma^2/2$, we have that $\E \int_\D \I{G^{\gamma+\delta}_{(0, \eps')}(z)}\I{G^{\gamma+\delta}_{(0, \eps')}(w)}\frac{M^\gamma(dz)M^\gamma(dw)}{|z-w|^a} < C$. In particular Frostman's lemma then helps to turn the Corollary \ref{cor:supp} to a sharp estimate:
\begin{cor}\label{cor:dimsharp}
Let $\gamma \in (-2, 2)$. Then  almost surely the Hausdorff dimension of the support of the measure is at least $2-\gamma^2/2$.
\end{cor}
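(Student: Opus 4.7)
The approach is to combine the refined second-moment calculation from the proof of Lemma \ref{lem:2ndme}, now weighted by $|z-w|^{-a}$, with Frostman's energy method, exactly mirroring the strategy used to deduce Corollary \ref{cor:supp} from the plain second-moment bound.

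First, I would repeat the two-point Cameron--Martin computation in the proof of Lemma \ref{lem:2ndme} after inserting the factor $|z-w|^{-a}$. At the approximate level this gives
\begin{align*}
&\E\iint_{\D\times\D} \I{G^{\gamma+\delta}_{(\eps,\eps')}(z)}\I{G^{\gamma+\delta}_{(\eps,\eps')}(w)} \frac{M_\eps^\gamma(dz)\,M_\eps^\gamma(dw)}{|z-w|^a}\\
&\qquad = \iint_{\D\times\D} \frac{dz\,dw}{|z-w|^a}\, e^{\gamma^2\C_\eps(z,w)}\,\tilde\P\bigl(G^{\gamma+\delta}_{(\eps,\eps')}(z)\cap G^{\gamma+\delta}_{(\eps,\eps')}(w)\bigr).
\end{align*}
The proof of Lemma \ref{lem:2ndme} already bounds the integrand on $|z-w|\leq \eps'/2$ by a constant times $(|z-w|\vee\eps)^{-\gamma^2+(\gamma-\delta)^2/2}$; dividing additionally by $|z-w|^a$, the near-diagonal integral is finite uniformly in $\eps$ provided $a < 2 - \gamma^2/2 + \gamma\delta - \delta^2/2$. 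The far-diagonal piece $|z-w|>\eps'/2$ contributes at most a constant depending on $\eps'$. Hence for any $a < 2-\gamma^2/2$ one may choose $\delta>0$ small enough that the full bound is uniform in $\eps$, and Fatou's lemma together with Theorem \ref{thm:basic} yields
\begin{equation*}
\E\iint_{\D\times\D}\I{G^{\gamma+\delta}_{(0,\eps')}(z)}\I{G^{\gamma+\delta}_{(0,\eps')}(w)}\frac{M^\gamma(dz)\,M^\gamma(dw)}{|z-w|^a}<\infty.
\end{equation*}

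Next, let $M_{\delta,\eps'}$ denote the restriction of $M^\gamma$ to $G^{\gamma+\delta}_{(0,\eps')}$. The display above shows that $M_{\delta,\eps'}$ has finite $a$-energy almost surely, and Frostman's lemma therefore gives $\dim_H \mathrm{supp}(M_{\delta,\eps'})\geq a$ on the event $\{M_{\delta,\eps'}(\D)>0\}$. The family $\{G^{\gamma+\delta}_{(0,\eps')}\}$ is monotone increasing as $\eps'\downarrow 0$ (the constraint applies on a shrinking interval), and every $\gamma$-thick point belongs to $G^{\gamma+\delta}_{(0,\eps')}$ for all $\eps'$ small enough depending on the point. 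Combined with the statement from Section \ref{sec:RM} that $M^\gamma$-a.e.\ point is $\gamma$-thick, monotone convergence yields $M_{\delta,\eps'}(\D)\uparrow M^\gamma(\D)>0$ almost surely. Since $\mathrm{supp}(M_{\delta,\eps'})\subseteq\mathrm{supp}(M^\gamma)$, this gives $\dim_H \mathrm{supp}(M^\gamma)\geq a$ almost surely for every fixed $a < 2-\gamma^2/2$. Intersecting over a countable sequence $a_n\uparrow 2-\gamma^2/2$ concludes.

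The main technical point to verify carefully is the $\eps\to 0$ passage in the two-point energy bound: the indicators are expressed via circle averages of $\Gamma_\eps$ rather than of $\Gamma$, so one should extract a subsequence along which those circle averages converge to the limiting ones pointwise almost surely, and then apply Fatou. This is not a conceptual obstacle but simply a bookkeeping upgrade of the calculation already performed in Lemma \ref{lem:2ndme}; everything else is just Frostman and the $\gamma$-thickness of typical points.
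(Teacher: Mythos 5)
Your proposal is correct and is essentially the paper's own argument: the paper deduces the corollary by exactly this route, namely rerunning the two-point Cameron--Martin bound of Lemma \ref{lem:2ndme} with the extra weight $|z-w|^{-a}$ to get a finite $a$-energy for the thinned-out measure when $a<2-\gamma^2/2$, and then applying Frostman's lemma (your extra step showing $M_{\delta,\eps'}(\D)\uparrow M^\gamma(\D)>0$ via $\gamma$-thickness of typical points is a detail the paper leaves implicit but is needed and handled correctly). One trivial slip: the near-diagonal convergence condition should read $a<2-\gamma^2/2-\gamma\delta+\delta^2/2$ (the bound $(\hat\eps)^{-\gamma^2+(\gamma-\delta)^2/2}$ gives a threshold slightly \emph{below} $2-\gamma^2/2$ for small $\delta>0$), which does not affect the conclusion since you send $\delta\to 0$.
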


Combining this with the fact that $M^\gamma(\Gamma,dz)$ is supported on $\gamma-$thick points gives us a lower bound on the Hausdorff dimension of $\gamma-$thick points. An upper bound can be obtained by a classical 1-point estimate.

\subsection*{References to the literature}

The idea of rooted measures goes back to Peyri\`ere \cite{Per} in the case of multiplicative cascades, and is used already in \cite{Kah} for the GMC measures. The observation that rooted measures can be used characterise the GMC measure seems to go back at to at least \cite{Sato}, but was brought newly to attention by the generalisations in \cite{Sha}.

The strategy to treat the $L^1$ setting by removing thick points seems to be first used in \cite{DS}. This idea is then greatly distilled, simplified and generalized in \cite{Ber} to give very elementary and clean construction of the GMC measures in the case of log-correlated fields in all dimensions and for general base measures. 

The upper bound on the Hausdorff dimension of the $\gamma-$thick points of the 2D Gaussian free field is calculated in \cite{TP}, where the lower bound is derived from an usual second moment calculation.

\section{An intrinsic definition of the GMC measure}\label{sec:intr}
Let us now come back to the remark made after Lemma \ref{lem:rm}, and state the first characterization of the GMC measure. For clarity, let us denote by $\E_\Gamma$ the expectation w.r.t. just the GFF $\Gamma$.

\begin{prop}[Characterization of GMC as Radon-Nikodym derivative]\label{prop:chr1}
Let $\Gamma$ be a GFF on $D$. For $-2 < \gamma < 2$, there is a unique random measure $M(\Gamma,dz)=M^\gamma(\Gamma,dz)$ such that for all continuous functionals $F$ on $D \times \mathcal{D}(D)$, we have
\begin{equation}\label{eq:char}
\E_\Gamma \int_D F(z,\Gamma) M(\Gamma, dz) = \E_\Gamma \int_D F(z, \Gamma + \gamma \G_D(z,\cdot))dz.
\end{equation}
\end{prop}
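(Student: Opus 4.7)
The proposition breaks cleanly into existence and uniqueness. For existence I would check that the GMC measure constructed in Theorem \ref{thm:basic} satisfies the characterization; for uniqueness I would extract from the identity enough information about $M$ to pin it down almost surely, exploiting the (implicit, signaled by the notation $M(\Gamma, dz)$) assumption that $M$ is $\sigma(\Gamma)$-measurable.

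\textbf{Existence step.} The natural candidate is $M = M^\gamma(\Gamma, dz)$ from Theorem \ref{thm:basic}. At the regularized level Lemma \ref{lem:rm} gives
$$\E_\Gamma \int_D F(\Gamma_\eps, z) M_\eps(dz) = \int_D \E_\Gamma F(\Gamma_\eps + \gamma \C_\eps(z, \cdot), z) dz,$$
and I would pass to the limit $\eps \to 0$ on both sides. For the RHS, one checks that $\C_\eps(z, \cdot) \to \G_D(z, \cdot)$ in the space of distributions for each $z \in D$; continuity of $F$ then gives pointwise convergence of the integrand in $z$, and boundedness of $F$ justifies dominated convergence. For the LHS, Theorem \ref{thm:basic} provides joint convergence in probability of $(\Gamma_\eps, M_\eps) \to (\Gamma, M)$ (weakly in probability on measures, in probability in $\mathcal{D}(D)$ for the GFF), so joint continuity of $F$ yields $\int F(\Gamma_\eps, z) M_\eps(dz) \to \int F(\Gamma, z) M(dz)$ in probability, and the uniform bound $\E M_\eps(D) = \Leb(D)$ promotes this to convergence in expectation. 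The marginal condition $\E M(dz) = \Leb(dz)$ drops out by specializing to $F(\Gamma, z) = f(z)$.

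\textbf{Uniqueness step.} Let $M$ be any $\sigma(\Gamma)$-measurable random measure satisfying the identity. Testing against $F(\Gamma, z) = G(\Gamma) f(z)$ for $G \in C_b(\mathcal{D}(D))$ and $f \in C(\overline{D})$ gives
$$\E[G(\Gamma) M(f)] = \int_D \E G(\Gamma + \gamma \G_D(z, \cdot)) f(z) dz,$$
whose right-hand side depends only on the law of $\Gamma$ and on the data $G, f$. Letting $G$ range over a determining class for the law of $\Gamma$ (for instance $G(\Gamma) = \exp(i(\Gamma, \phi))$ with $\phi \in C_c^\infty(D)$) pins down the conditional expectation $\E[M(f) \mid \Gamma]$ almost surely in $L^1$, which by $\sigma(\Gamma)$-measurability coincides with $M(f)$ itself. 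Running $f$ through a countable dense family as in Theorem \ref{thm:conv} then identifies the whole measure $M$.

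\textbf{Main obstacle.} The most delicate step I expect is the LHS limit passage in the existence argument, since $F(\Gamma_\eps, z)$ depends jointly on a field converging in probability in $\mathcal{D}(D)$ and a measure converging weakly in probability, with two different topologies in play. A clean route is to first establish the identity for cylindrical $F$, say $F(\Gamma, z) = G((\Gamma, \psi_1), \dots, (\Gamma, \psi_k)) f(z)$, where the convergence of $F(\Gamma_\eps, z)$ reduces to that of finitely many Gaussian linear functionals of $\Gamma_\eps$, and then to extend to general continuous bounded $F$ via a density / Stone--Weierstrass argument combined with the uniform integrability provided by $\E M_\eps(D) = \Leb(D)$.
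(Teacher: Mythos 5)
Your proposal is correct and follows essentially the same route as the paper, whose proof consists of two sentences: existence is exactly the $\eps=0$ case of Lemma \ref{lem:rm} (the rooted-measure identity, whose limit passage you carry out in more detail), and uniqueness is the observation that the right-hand side of \eqref{eq:char} does not depend on $M$, so testing against $F(\Gamma,z)=G(\Gamma)f(z)$ determines $\E[M(f)\mid\Gamma]=M(f)$ as you do. Your explicit flagging of the implicit $\sigma(\Gamma)$-measurability assumption and of the joint-convergence subtlety on the left-hand side are details the paper glosses over, but they do not change the argument.
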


The uniqueness follows from \eqref{eq:char}: indeed, for any $F$, the RHS side is just some well-defined number, and the LHS over all $F$ characterizes uniquely a probability measure on $D \times \mathcal{D}(D)$. Thus, by disintegration theorem, it also uniquely characterizes the probability measure $\frac{1}{M(\Gamma,D=}M(\Gamma, dz)$ on $D$. Finally, $M(\Gamma, D)$ is characterized by taking $F(z, \Gamma) = F(\Gamma)$.

The existence was given by our construction of rooted measures in Section \ref{sec:RM}. Notice that only now it really makes sense to talk about ``the'' GMC measure of the 2D GFF. It also directly implies the uniqueness given in Proposition \ref{prop:basic} and in Theorem \ref{thm:basic}. 

In fact there is also even a more natural characterization of the GMC as the exponential function:

\begin{prop}[Characterization of GMC as the exponential]\label{prop:chr2}
Consider a GFF $\Gamma$ on $D$. Then for $-2 < \gamma < 2$, there is a unique random measure $M(\Gamma,dz)=M^\gamma(\Gamma,dz)$ such that $\E_\Gamma M(\Gamma,dz )$ is equal to Lebesgue measure and for all functions $f \in \mathcal{H}^1_0$, we have almost surely that 
\begin{equation}\label{eq:char2}
M(\Gamma + f, dz) = e^{\gamma f(z)}M(\Gamma,dz).
\end{equation}
\end{prop}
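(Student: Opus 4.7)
The GMC measure $M^\gamma$ of Theorem~\ref{thm:basic} is the witness. The identity $\E M=\Leb$ is built into the normalisation. For the shift relation, first take $f\in C_c^\infty(D)$ and use the circle-average approximation: shifting $\Gamma$ by $f$ shifts $\Gamma_\eps(z)$ by $f_\eps(z):=(f,\rho_\eps^z)$, so
\[
M^\gamma_\eps(\Gamma+f,dz)=e^{\gamma(\Gamma_\eps(z)+f_\eps(z))-\tfrac{\gamma^2}{2}\C_\eps(z,z)}\,dz=e^{\gamma f_\eps(z)}\,M^\gamma_\eps(\Gamma,dz).
\]
Because $f_\eps\to f$ uniformly on compacts, Theorem~\ref{thm:basic} passes this to $\eps=0$. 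For general $f\in\mathcal H^1_0$ I would approximate $f$ by smooth $f_n\in C_c^\infty(D)$ in the Dirichlet norm, pass the left-hand side using $L^2$-convergence of Cameron--Martin densities, and pass the right-hand side via a quasi-continuous representative of $f$.

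\textbf{Uniqueness.} Let $M$ satisfy \eqref{eq:char2} and $\E M=\Leb$. The plan is to reduce to Proposition~\ref{prop:chr1}. For continuous bounded $J:\overline D\to\R$ and $f\in\mathcal H^1_0$, applying the Cameron--Martin formula for the GFF to $H(\omega):=\int J(z)\,M(\omega,dz)$ gives $\E\!\int J\,M(\Gamma+f,dz)=\E[e^{(f,\Gamma)_\nabla-\tfrac12\|f\|_\nabla^2}\!\int J\,M(\Gamma,dz)]$; using \eqref{eq:char2} and Fubini together with $\E M=\Leb$ on the left-hand side yields the intrinsic identity
\[
\E\!\left[e^{(f,\Gamma)_\nabla-\tfrac12\|f\|_\nabla^2}\!\int_D J(z)\,M(\Gamma,dz)\right]\;=\;\int_D J(z)\,e^{\gamma f(z)}\,dz. \qquad (\star)
\]
By the reproducing-kernel identity $(f,\G_D(z,\cdot))_\nabla=f(z)$, the right-hand side of $(\star)$ equals $\int_D J(z)\,\E\bigl[e^{(f,\Gamma+\gamma\G_D(z,\cdot))_\nabla-\tfrac12\|f\|_\nabla^2}\bigr]dz$, so \eqref{eq:char} of Proposition~\ref{prop:chr1} holds for every test function of the form $F(\omega,z)=J(z)\,e^{(f,\omega)_\nabla-\tfrac12\|f\|_\nabla^2}$. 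Since Cameron--Martin exponentials are total in $L^q(\P^\Gamma)$ for every $q<\infty$, a density argument extends \eqref{eq:char} to all continuous bounded $F$, and Proposition~\ref{prop:chr1} then forces $M=M^\gamma$.

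\textbf{The main obstacle.} The conceptual skeleton above is short; the delicacies are purely analytic. In the existence step, $\mathcal H^1_0$-functions are not continuous in dimension $2$, so the identity for general $f$ must be interpreted via a quasi-continuous representative and passed to the limit inside $\int e^{\gamma f_n(z)}M(\Gamma,dz)$ by dominated convergence; this is where I expect the most care to be needed. The parallel delicacy in the uniqueness step is the density extension of \eqref{eq:char}, which requires pairing $\int J\,dM$ (a priori only $L^1$) against Cameron--Martin exponentials (in every $L^q$, $q<\infty$, but not $L^\infty$); one either derives mild higher integrability of $\int J\,dM$ directly from $(\star)$ by testing against bounded $f$, or bypasses the issue by reading $(\star)$ as prescribing the Laplace transform of the finite positive measure $\int_D J(z)\,M(\Gamma,dz)\cdot d\P$ on distribution space, which is independent of $M$ and hence forces $M=M^\gamma$ via Theorem~\ref{thm:conv}.
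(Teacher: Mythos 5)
Your proposal is correct and follows essentially the same route as the paper: existence by pushing the shift identity through an approximating sequence (the paper uses the orthogonal approximation, where \eqref{eq:char2} holds exactly at each finite level, rather than circle averages plus a smoothing of $f$), and uniqueness by the identical Cameron--Martin computation reducing \eqref{eq:char2} to the rooted-measure characterization \eqref{eq:char} of Proposition \ref{prop:chr1}. Your identity $(\star)$ is precisely the paper's chain of equalities, and your ``totality of Cameron--Martin exponentials'' step is the paper's identification of the conditional Laplace transform of $(\Gamma,f)_\nabla$ under $\Q^*$ for all $f\in\mathcal H^1_0$.
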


\begin{proof}
As property \eqref{eq:char2} holds for each approximate measure $M(\Gamma^n,dz)$, the existence follows from the construction of the GMC measure via orthogonal approximations. To deduce uniqueness, it suffices to prove that this second characterization implies \eqref{eq:char}. 

So suppose we have a GFF $\Gamma$ and a random measure $M(\Gamma,dz)$ satisfying \eqref{eq:char2}. Consider again a rooted measure $\Q^*$ (this time not normalized to have unit mass) on $D\times \mathcal{D}(D)$ corresponding to $M(\Gamma,dz)$ and characterised by requiring for any bounded continuous functional $F$ on $D \times \mathcal{D}(D)$ 
$$\E_{\Q^*} F(z,\Gamma) = \E_\Gamma \int F(z,\Gamma)M(\Gamma,dz).$$
Observe that \eqref{eq:char} is equivalent to saying that under $\Q^*$ conditionally on $z$, $\Gamma$ has the law of $\tilde \Gamma + \gamma \G_D(z,\cdot)$, where $\tilde \Gamma$ is a GFF. To obtain the claim, it thus suffices to determine the conditional law on $\Gamma$ under $\Q^*$ conditionally on $z$. In this respect, recall that the Cameron-Martin space of $\Gamma$ is $\mathcal{H}^1_0$ and that its inner product is denoted by $(f,f)_\nabla$. Let $f \in \mathcal{H}^1_0$ and $g:D \to \R$ be some auxiliary positive and bounded continuous function on $D$. From the definition of $\Q^*$
$$\E_{\Q^*} \left[g(z) e^{(\Gamma, f)_\nabla}\right] = \E_\Gamma \left[\int_D g(z)e^{(\Gamma, f)_\nabla} M(\Gamma, dz)\right].$$ 
By Fubini this equals further
$$\int_D g(z) \E_\Gamma \left[e^{(\Gamma, f)_\nabla} M(\Gamma, dz)\right].$$
From the Cameron-Martin theorem, we have:
$$\int_D g(z) \E_\Gamma \left[e^{(\Gamma, f)_\nabla} M(\Gamma, dz)\right] = \int_D g(z) \E_\Gamma\left[e^{\frac{1}{2} (f, f)_\nabla} M(\Gamma+f, dz)\right].$$
From Equation \eqref{eq:char2}, this equals $\int_D g(z) \E_\Gamma \left[ e^{\frac{1}{2} (f, f)_\nabla} e^{\gamma f(z)}  M(\Gamma, dz)\right]$, which can be rewritten as $$\int_D g(z) e^{\frac{1}{2} (f, f)_\nabla +\gamma (\G_D(z,\cdot), f)_\nabla}\E_\Gamma\left[M(\Gamma, dz)\right].$$
But $\E_\Gamma M(\Gamma,dz)$ is Lebesgue measure. Hence, recognizing $$e^{\frac{1}{2} (f, f)_\nabla +\gamma (\G_D(z,\cdot), f)_\nabla}=\E_{\tilde \Gamma} \left[e^{(\tilde \Gamma+\gamma \G(z, \cdot),f)_\nabla}\right]$$ with $\tilde \Gamma$ a GFF, we can write this as
$$\int_D dzg(z) \E_{\tilde \Gamma} \left[e^{(\tilde \Gamma+\gamma \G(z, \cdot),f)_\nabla}\right]$$
which characterizes the conditional law of $(\Gamma,f)_\nabla$ under $\Q^*$ conditionally on $z$ to be that of $(\tilde \Gamma+\gamma \G(z, \cdot),f)_\nabla$. As this holds for all $f \in \mathcal{H}^1_0$, we deduce that $\Gamma$ under $\Q^*$ conditionally on $z$ has the law of $\tilde \Gamma + \gamma \G_D(z,\cdot)$, where $\tilde \Gamma$ is a GFF.
\end{proof}
\subsection{GMC on top of GMC and another route to the $L^1$ regime}

We now know what the GMC measure is, and thus can really verify whether a construction gives the GMC measure. As an example, we will show how the knowledge about the dimension of the Liouville measure for $\gamma < \sqrt{2}$ allows us to give yet another construction of the measure in the regime $\gamma \geq \sqrt{2}$. 

The idea is to construct an independent GMC measure with parameter $a$ on top of  $M^\gamma(\Gamma, dz)$ with $\gamma < \sqrt{2}$. In other words, let $\tilde \Gamma$ be an independent GFF on $D$. We then define $$\widetilde M_n = M^a(\tilde \Gamma^n, M^\gamma(\Gamma,dz)),$$
where $\tilde \Gamma^n$ is the orthogonal approximation from Theorem \ref{thm:GFFapp}. 
From Corollary \ref{cor:dimsharp} and from Frostman's lemma we know that for $\gamma \in (-\sqrt{2}, \sqrt{2})$ (in fact for $\gamma \in (-2,2)$) and any $a < 2-\gamma^2/2$ almost surely $$\int_D \frac{M^\gamma(dz)M^\gamma(dw)}{|z-w|^a} < C.$$ 
Hence the same second moment calculation as in Section \ref{sec:basic} shows that if $a^2 < 2 -\frac{\gamma^2}{2}$, then for any a.s. instance of $\Gamma$, we have that $\widetilde M_n$ converges in $L^2(d\tilde \Gamma)$ to some resulting measure $\widetilde M(dz)$. We claim that it is a GMC measure:
\begin{prop}
$\widetilde M(dz)$ has the law of the GMC measure with parameter $\sqrt{\gamma^2 + a^2}$ w.r.t. the field $\widehat \Gamma := \frac{1}{\sqrt{a^2 + \gamma^2}} (\gamma \Gamma +a \tilde \Gamma)$. 
\end{prop}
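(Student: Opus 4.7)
The plan is to verify the rooted-measure characterisation of Proposition~\ref{prop:chr1} for the pair $(\widehat\Gamma, \widetilde M)$ with parameter $\beta := \sqrt{\gamma^2 + a^2}$. The preliminary observation is that $\widehat\Gamma$ is itself a zero-boundary GFF on $D$: as a linear combination of two independent GFFs with coefficients $(\gamma/\beta, a/\beta)$ whose squares sum to $1$, its covariance is $(\gamma^2/\beta^2)\G_D + (a^2/\beta^2)\G_D = \G_D$. Moreover, the assumption $a^2 < 2 - \gamma^2/2$ combined with $\gamma^2 < 2$ gives $\beta^2 = \gamma^2 + a^2 < 2 + \gamma^2/2 < 4$, so we are safely in the subcritical regime where Proposition~\ref{prop:chr1} applies.

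The mean-measure condition $\E \widetilde M(dz) = \Leb(dz)$ follows by conditioning: the $L^2(d\tilde\Gamma)$ construction gives $\E_{\tilde\Gamma}[\widetilde M(dz)\mid\Gamma] = M^\gamma(\Gamma,dz)$ (the outer GMC preserves its base measure in expectation), and then $\E_\Gamma M^\gamma(\Gamma,dz) = \Leb(dz)$. For the rooted identity, let $F$ be a continuous bounded functional on $\mathcal{D}(D)\times D$ and apply Lemma~\ref{lem:rm} twice. First, conditioning on $\Gamma$ and applying it to the outer GMC $M^a(\tilde\Gamma, M^\gamma(\Gamma, dz))$: the shift of $\tilde\Gamma$ by $a\,\G_D(z,\cdot)$ translates to a shift of $\widehat\Gamma$ by $(a/\beta)\cdot a\,\G_D(z,\cdot) = (a^2/\beta)\,\G_D(z,\cdot)$, giving
$$\E_{\tilde\Gamma}\int F(\widehat\Gamma,z)\,\widetilde M(dz) = \E_{\tilde\Gamma}\int F\!\left(\widehat\Gamma + \tfrac{a^2}{\beta}\G_D(z,\cdot),\,z\right) M^\gamma(\Gamma,dz).$$
Taking $\E_\Gamma$ and applying Lemma~\ref{lem:rm} to the inner GMC $M^\gamma(\Gamma,dz)$, the additional shift of $\Gamma$ by $\gamma\,\G_D(z,\cdot)$ contributes a further $(\gamma^2/\beta)\,\G_D(z,\cdot)$ to $\widehat\Gamma$. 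Summing the two contributions,
$$\E\int F(\widehat\Gamma,z)\,\widetilde M(dz) = \E\int F\!\left(\widehat\Gamma + \beta\,\G_D(z,\cdot),\,z\right) dz,$$
which is exactly the identity \eqref{eq:char} characterising $M^\beta(\widehat\Gamma,dz)$. The uniqueness part of Proposition~\ref{prop:chr1} then identifies $\widetilde M$ with $M^\beta(\widehat\Gamma,dz)$ in law.

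The only real subtlety is that Lemma~\ref{lem:rm} is stated for Lebesgue base measure, whereas the outer application uses the random measure $M^\gamma(\Gamma,dz)$. This extension is conceptually immediate: the Cameron-Martin computation underlying Lemma~\ref{lem:rm} acts on the Gaussian $\tilde\Gamma_\eps$ alone and treats the base measure as inert, so conditionally on $\Gamma$ one rewrites $\exp(a\tilde\Gamma_\eps(z) - \tfrac{a^2}{2}\E\tilde\Gamma_\eps(z)^2) M^\gamma(\Gamma,dz)$ as a Cameron-Martin reweighting of $\tilde\Gamma_\eps$ exactly as in the original proof. Passage from the $\eps$-approximation to the limit is controlled by the same $L^2(d\tilde\Gamma)$ estimates---valid precisely under $a^2 < 2 - \gamma^2/2$ via the bound from Corollary~\ref{cor:dimsharp}---that were used to construct $\widetilde M$ in the first place; everything else is bookkeeping of the coefficients $\gamma^2/\beta$ and $a^2/\beta$ that sum to $\beta$.
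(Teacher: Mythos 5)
Your verification of the rooted-measure identity \eqref{eq:char} for the pair $(\widehat\Gamma,\widetilde M)$ is correct, and the double application of Lemma \ref{lem:rm} (outer shift contributing $\tfrac{a^2}{\beta}\G_D(z,\cdot)$, inner shift contributing $\tfrac{\gamma^2}{\beta}\G_D(z,\cdot)$, summing to $\beta\,\G_D(z,\cdot)$) is a clean alternative to the paper's route, which instead passes the exponential characterisation \eqref{eq:char2} to the limit along the finite-dimensional approximations $\widetilde M_{n,n}$. Your remark about extending Lemma \ref{lem:rm} to the random base measure $M^\gamma(\Gamma,dz)$ is also handled adequately.

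However, there is a genuine gap: you never establish that $\widetilde M$ is measurable with respect to $\widehat\Gamma$, and without this the uniqueness part of Proposition \ref{prop:chr1} does not apply. By construction $\widetilde M$ is measurable with respect to the pair $(\Gamma,\tilde\Gamma)$, which carries strictly more information than $\widehat\Gamma$ (the orthogonal combination $\tfrac1\beta(a\Gamma-\gamma\tilde\Gamma)$ is an independent GFF). The identity \eqref{eq:char}, tested against functionals $F(\widehat\Gamma,z)=G(\widehat\Gamma)f(z)$, only determines $\E[\widetilde M(f)\mid\widehat\Gamma]$; it identifies the conditional expectation with $M^\beta(\widehat\Gamma,f)$ but leaves open the possibility that $\widetilde M(f)$ differs from it by a fluctuation that is conditionally centred given $\widehat\Gamma$ (for $\gamma=0$, Lebesgue measure times an independent mean-one random variable satisfies \eqref{eq:char} and the mean condition, so measurability is genuinely part of the characterisation, as the notation $M(\Gamma,dz)$ in Proposition \ref{prop:chr1} indicates). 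This is precisely the point on which the paper's proof spends most of its effort: one shows $\E[\widetilde M(f)\mid\widehat\Gamma_n]=\widetilde M_{n,n}(f)$, so that $\widetilde M_{n,n}(f)$ is a regular martingale in the filtration of $\widehat\Gamma_n$ converging almost surely and in $L^1$ to $\widetilde M(f)$, whence $\widetilde M$ is $\sigma(\widehat\Gamma)$-measurable. Your argument needs this step (or an equivalent one) before the characterisation can be invoked.
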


\begin{proof}
The idea is to use Proposition \ref{prop:chr2} as a criteria. To do this, we need to show that the measure $\widetilde M(dz)$ measurable w.r.t. $\widehat \Gamma$ (by construction it is measurable only w.r.t. the pair $\Gamma, \tilde \Gamma)$) and check the condition \ref{eq:char2} of that Proposition.

 For the first part, let $f$ be any continuous bounded function on $D$. By the convergence of $\widetilde M_n(f)$ to $\widetilde M(f)$ in $L^2(d\tilde \Gamma)$ (when conditioning on $\Gamma$), and of $M^\gamma(\Gamma_n,dz)$ to $M^\gamma(\Gamma,dz)$ in $L^2(d\Gamma)$ we also have that $\E\left[\int_D f(z) M^a(\tilde \Gamma_m, M^\gamma(\Gamma_k, dz)) | \widehat \Gamma_n\right]$ converges to $\E\left[\widetilde M(f) | \widehat \Gamma_n\right]$ as first $k \to \infty$ and then $m \to \infty$ in $L^2((d\Gamma, \tilde d\Gamma))$.

But now for any $m,k > n$ we have that $$\E\left[\int_D f(z) M^a(\tilde \Gamma_m, M^\gamma(\Gamma_k, dz))|\widehat \Gamma_n \right] = \int_D f(z)M^a(\tilde \Gamma_n, M^\gamma(\Gamma_n,dz)) =: \widetilde M_{n,n}(f).$$
And in particular then $\E\left[\widetilde M(f) | \widehat \Gamma_n\right] = \widetilde M_{n,n}(f)$. But $\widetilde M_{n,n}(f)$ is then a regular martingale w.r.t. $\widehat \Gamma_n$ and thus converges almost surely and in $L^1(d\widehat \Gamma)$ to $\widetilde M$. Hence the measurability follows. As the expectation of $\widetilde M(dz)$ w.r.t the pair $(\Gamma, \tilde \Gamma)$ is equal to Lebesgue measure, it now also follow that the expectation of $\widetilde M(dz)$ w.r.t $\widehat \Gamma$ is equal to Lebesgue measure.

Thus to prove the proposition, it suffices to further check the condition \eqref{eq:char2} with $\widehat \Gamma$ in the role of $\Gamma$ and $\sqrt{\gamma^2 + a^2}$ in the role of $\gamma$. But \eqref{eq:char2} holds for $\widetilde M_{n,n}(dz)$ as it has the law of $M_n^{\sqrt{\gamma^2 + a^2}}(\widehat \Gamma,dz)$. Thus as $\widetilde M_{n,n}(dz) \to \widetilde M(dz)$ almost surely, we obtain \eqref{eq:char2} also for $\widetilde M(dz)$ and hence conclude the lemma.

\end{proof}

Thus given a GMC measure of parameter $\gamma_0$, we can construct one with parameter $\gamma_1$ as close as we wish to $\sqrt{\gamma_0^2 + 2-\frac{\gamma_0^2}{2}} = \sqrt{2+\gamma_0^2/2}$. Iterating this we can construct the GMC measure with parameter $\gamma_n$ as close as we wish to $$\sqrt{2+1+...+2^{2-n} +\gamma_0^2\frac{1}{2^n}}.$$ 
Taking $n$ sufficiently large we can obtain any $\gamma < 2$ using such an iteration. Thus this gives us another construction of the GMC measure in the whole $\gamma \in (-2, 2)$ regime. Notice that again there is nothing specific about the GFF in this section, and all the arguments hold for general nice log-correlated fields in 2D and even in higher dimensions.

\begin{rem}
Here we used Corollary \ref{cor:dimsharp} and thus the calculations of Lemma \ref{lem:2ndme} to justify the dimension of the GMC measure, and thus as presented we did not really give an alternative construction in $L^1$ regime. However, the dimension of $M^\gamma(dz)$ in the $L^2$ regime could also be obtained by different means than by removing thick points, for example by using rooted measures. Thereafter one would bootstrap the dimension together with enlarging the interval for $\gamma$, providing a stand-alone method to treat the $L^1$ regime. And after all, the argument also seems fun enough to present it for its own sake.
\end{rem}

\subsection*{References to the literature}

The fact that the GMC measure can be seen as a Radon-Nikodym derivative seems to stem from \cite{Sato}, and is developed in larger generality in \cite{Sha}. The idea of characterizing GMC as the exponential function (Proposition \ref{prop:chr2}) seems to first appear in \cite{Sha}, from where we also took the proof of the second characterization. 

The observation that by defining GMC on top of GMC one can bypass the $L^2$-barrier can be traced back to \cite{Kah}. In fact, he uses this idea rather to obtain the exact lower bound on the dimension of the measure, as his method first only applies in the $L^2$ regime. 

\section{Behaviour of the measure on different scales: scaling relations and multifractality}\label{sec:scaling}

One of the interesting aspects of the GMC measures is its non-trivial scaling. This scaling is for example expressed in the fact that the moments of balls of radius $r$, i.e. $\E M(\Gamma,B_r)^q$, that for Lebesgue measure scale as $r^{2q}$, here scale as $r^{\zeta(q)}$ with $\zeta(q)$ quadratic in $q$. More precisely we have the following scaling relation:

\begin{prop}[Scaling relation for GMC]\label{prop:SR}
Let $R \in (0,1)$ be fixed. and let $\Gamma_\eps$ be the circle average approximation of the GFF $\Gamma$. Then for any $z \in R\D$  and $r, \eps > 0$ with $r \geq 2\eps$ are sufficiently small, we have 
$$\E \left[M(\Gamma_\eps, B_r(z))^q\right] \asymp r^{\zeta(q)}\E \left[M(\Gamma_{\frac{\eps}{2r}}, \frac{1}{2}\D)^q\right],$$
where $\zeta(q) = (2+\frac{\gamma^2}{2})q - \frac{\gamma^2 q^2}{2}$. In particular, as soon as we know that the $q$-th moment of $M(\Gamma_\eps,\D)$ is uniformly bounded for all $\eps \geq 0$, we have for all $\eps \geq 0$ and, in particular, for the limiting measure 
$$\E \left[M(\Gamma_\eps, B_r(z))^q\right] \asymp_q r^{\zeta(q)}.$$
\end{prop}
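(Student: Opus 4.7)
My plan is to exploit the Markov decomposition of Theorem \ref{prop:GFFprp}(2) at scale $2r$ around $z$, combined with the conformal invariance of the GFF, to reduce the left-hand side to an independent product of an explicit lognormal shift and a rescaled copy of the right-hand side. Concretely, for $z \in R\D$ and $r$ small enough that $B_{2r}(z) \subset \D$, I would write $\Gamma = \Gamma^{B_{2r}(z)} + h$ where $\Gamma^{B_{2r}(z)}$ is a zero-boundary GFF on $B_{2r}(z)$ independent of the harmonic function $h := h_{B_{2r}(z)}$. Under the map $u \mapsto z + 2ru$, conformal invariance gives that $\tilde \Gamma(u) := \Gamma^{B_{2r}(z)}(z+2ru)$ has the law of a GFF on $\D$, and circle averages rescale as $\Gamma^{B_{2r}(z)}_\eps(z+2ru) = \tilde \Gamma_{\eps/(2r)}(u)$. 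Since $\eps \leq r/2$, the mean value property applies to $h$ and the full circle average decomposes pointwise as $\Gamma_\eps(w) = \tilde\Gamma_{\eps/(2r)}(u) + h(w)$ for $w = z+2ru \in B_r(z)$, with the variance splitting by independence.

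Substituting this and changing variable $w \mapsto u$ produces
\begin{equation*}
M(\Gamma_\eps, B_r(z)) = (2r)^2 \int_{\frac{1}{2}\D} F(u) \, e^{\gamma \tilde\Gamma_{\eps/(2r)}(u) - \frac{\gamma^2}{2}\E \tilde\Gamma_{\eps/(2r)}(u)^2} \, du,
\end{equation*}
where $F(u) := \exp\bigl(\gamma h(z+2ru) - \tfrac{\gamma^2}{2}\E h(z+2ru)^2\bigr)$ is determined by $h$ and hence independent of $\tilde \Gamma$. If one can replace $F(u)$ by its central value $F(0) = \exp\bigl(\gamma \Gamma_{2r}(z) - \tfrac{\gamma^2}{2}\E\Gamma_{2r}(z)^2\bigr)$ at the cost of only bounded multiplicative errors, then conditioning on $h$ and using independence should give
\begin{equation*}
\E\bigl[M(\Gamma_\eps,B_r(z))^q\bigr] \asymp (2r)^{2q}\,\E\bigl[F(0)^q\bigr]\,\E\bigl[M(\tilde\Gamma_{\eps/(2r)},\tfrac{1}{2}\D)^q\bigr].
\end{equation*}
The Gaussian moment is explicit: since $\E\Gamma_{2r}(z)^2 = -\log(2r) + O(1)$ uniformly in $z \in R\D$, one has $\E[F(0)^q] \asymp (2r)^{\gamma^2 q(1-q)/2}$, which together with the $(2r)^{2q}$ prefactor assembles to exactly $r^{\zeta(q)}$.

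The real obstacle is justifying the replacement of $F(u)$ by $F(0)$ uniformly in $r$, $z$, $\eps$. For this I would study the centred Gaussian process $\hat h(u) := h(z+2ru) - h(z)$ on $u \in \tfrac12\D$: writing its covariance as the difference of Green's functions of $D$ and of $B_{2r}(z)$ evaluated at the rescaled points, the $-\log(2r)$ singularities cancel upon taking increments, leaving a covariance uniformly bounded in $(r,z)$ in terms of the smooth harmonic part $g$. Borell--TIS (Theorem \ref{thm:TIS}) then gives that $K := \sup_{u \in \frac12\D}|\hat h(u)|$ has uniformly bounded exponential moments, while $\sup_u |\E h(z+2ru)^2 - \E h(z)^2|$ is $O(1)$ deterministically, so $F(u) \in [e^{-C(1+K)} F(0),\, e^{C(1+K)} F(0)]$ for a uniform constant $C$. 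To turn this into a sharp bound on $\E[F(0)^q e^{qK}]$ (a na\"ive H\"older would cost a factor $(2r)^{-\gamma^2 q^2 \eta}$ that cannot be absorbed), I would apply a Cameron--Martin shift: we have $\E[F(0)^q e^{qK}] = \E[F(0)^q]\,\tilde\E[e^{qK}]$, where under $\tilde\E$ the field $h$ is shifted by the harmonic function $q\gamma\,\E[h(\cdot)h(z)]$, which is bounded on $B_r(z)$, so $\tilde\E[e^{qK}]$ remains $O_q(1)$. Assembling these pieces gives the claimed scaling. The final statement about the constant bound is then immediate: once $\E[M(\Gamma_\delta, \D)^q]$ is uniformly bounded in $\delta$, the upper bound $\E[M(\tilde\Gamma_{\eps/(2r)}, \tfrac12\D)^q] \leq C_q$ is automatic, and the matching lower bound follows by plugging a fixed $r_0$ back into the scaling identity.
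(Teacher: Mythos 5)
Your argument is essentially the paper's own: the Markov decomposition at scale $2r$, the rescaling of the interior field to a GFF on $\D$, the factoring of the harmonic part into its value $\Gamma_{2r}(z)$ at the centre plus an oscillation with uniformly bounded exponential moments (controlled by Borell--TIS together with the Dudley bound and Lemma \ref{lem:ca} for its mean), and the final use of independence are precisely the content of Lemma \ref{lem:cmp} and the proof of Proposition \ref{prop:SR} given there, with your Cameron--Martin decoupling being a slightly more explicit version of the paper's appeal to sub-Gaussianity of $S_{2r,r,z}$. One small correction: the shift $q\gamma\,\E[h(\cdot)h(z)]$ is \emph{not} bounded on $B_r(z)$ (it equals $-q\gamma\log(2r)+O(1)$ there), but since $K$ depends only on increments of $h$ and the increments of this shift over $B_r(z)$ are $O(1)$ uniformly in $r$ and $z\in R\D$, your conclusion $\tilde{\E}[e^{qK}]=O_q(1)$ still stands.
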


This should be compared with exponential moments of a Gaussian: let $X_r$ be a zero mean Gaussian of variance $-\log r$. Then
$$\E e^{q(\gamma X_r - \frac{\gamma^2}{2}\E X_r^2)} = r^{\frac{\gamma^2}{2}q - \frac{\gamma^2 q^2}{2}}.$$
And indeed, as we will see, one way to deduce these scaling relations is to use a relation between the mass of $r$- balls and the exponential of a Gaussian of variance $-\log r$. The remaining $2q$ term in $\zeta(q)$ then just comes from the fact that Lebesgue measure of an $r$-ball is $r^2$. Indeed,
\begin{itemize}
\item at the scale $r$, we see a scaled copy of the measure times $\exp(\gamma X_r- \frac{\gamma^2}{2}\E [X_r^2])$ where $X_r$ is an independent Gaussian shift of variance $-\log r$, stemming from the $r$-circle average process of the underlying field.
\end{itemize}

To make this precise we have the following comparison Lemma. Here, as in Section \ref{sec:prelim}, for some fixed ball $B$, we decompose the GFF as a sum of a zero boundary GFF in the ball $B$, denoted $\Gamma^B$ and a random harmonic function in this disk, denoted $h_B$. 

\begin{lemma}[GMC mass vs GFF circle average]\label{lem:cmp}
Let $R \in (0,1)$ be fixed. Then for any $\eps, r, \tilde r \in (0,1)$ with $0.5\tilde r \geq r \geq \eps$ small enough, and for all $z \in R\D$, we have that
$$\tilde r^{\frac{\gamma^2}{2}}e^{\gamma I_{\tilde r, r,z} }M(\Gamma_\eps^{B_{\tilde r}(z)}, B_r(z)) \lesssim M(\Gamma_\eps, B_{r}(z)) \lesssim \tilde r^{\frac{\gamma^2}{2}}e^{\gamma S_{\tilde r,r,z}} M(\Gamma_\eps^{B_{\tilde r}(z)}, B_r(z)),$$
where $I_{\tilde r, r,z}$ and $S_{\tilde r, r,z}$ are respectively the infimum and supremum over $w \in B_{r}$ of the process $h_{B_{2\tilde r}}(w)$. In particular, they are independent of $\Gamma^{B_{\tilde r}}$ and are $-\log \tilde r$ sub-Gaussians with mean of order $O(\sqrt{r/\tilde r})$.

Moreover, if the limit of $M(\Gamma_\eps, dz)$ exists as $\eps \downarrow 0$, we have the inequality without $\eps$.
\end{lemma}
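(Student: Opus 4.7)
The approach is to exploit the spatial Markov decomposition $\Gamma = \Gamma^{B_{\tilde r}(z)} + h_{B_{\tilde r}(z)}$ from Theorem \ref{prop:GFFprp}, with the two pieces independent. The constraint $\tilde r/2 \geq r \geq \eps$ guarantees that for every $w \in B_r(z)$ the closed disk $\overline{B_\eps(w)}$ lies inside $B_{\tilde r}(z)$, so by the mean-value property for harmonic functions
$$\Gamma_\eps(w) = \Gamma^{B_{\tilde r}(z)}_\eps(w) + h_{B_{\tilde r}(z)}(w), \qquad \E \Gamma_\eps(w)^2 = \E (\Gamma^{B_{\tilde r}(z)}_\eps(w))^2 + \E h_{B_{\tilde r}(z)}(w)^2.$$
Substituting these into the integrand of $M(\Gamma_\eps, B_r(z))$, the density factorises as the zero-boundary GMC integrand for $\Gamma^{B_{\tilde r}(z)}_\eps$ times the $\eps$-independent weight $\exp\bigl(\gamma h_{B_{\tilde r}(z)}(w) - \frac{\gamma^2}{2}\E h_{B_{\tilde r}(z)}(w)^2\bigr)$.

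A short Green's-function computation, using $\E h_{B_{\tilde r}(z)}(w)^2 = \G_D(w,w) - \G_{B_{\tilde r}(z)}(w,w)$ so that the logarithmic diagonal singularities cancel, gives $\E h_{B_{\tilde r}(z)}(w)^2 = \log \CR(w,\partial D) - \log \CR(w,\partial B_{\tilde r}(z)) = -\log \tilde r + O(1)$ uniformly in $w \in B_r(z)$, $z \in R\D$ and small $\tilde r$; this is where the hypothesis $R<1$ enters, to keep $\log \CR(w,\partial D)$ bounded. Hence the scalar $\exp\bigl(-\frac{\gamma^2}{2}\E h_{B_{\tilde r}(z)}(w)^2\bigr)$ is comparable to $\tilde r^{\gamma^2/2}$ up to universal multiplicative constants, uniformly in $w$. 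Bounding $h_{B_{\tilde r}(z)}(w)$ from above by $S_{\tilde r,r,z}$ (respectively from below by $I_{\tilde r,r,z}$) and pulling the resulting $w$-independent factor out of the integral yields both halves of the sandwich.

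For the additional claims about $S$ and $I$, independence from $\Gamma^{B_{\tilde r}(z)}$ is the Markov property. Since $h_{B_{\tilde r}(z)}$ is a centered Gaussian field on $B_r(z)$ with $\sup_w \E h_{B_{\tilde r}(z)}(w)^2 = -\log \tilde r + O(1)$, Borell--TIS (Theorem \ref{thm:TIS}) delivers the sub-Gaussianity with variance $-\log \tilde r$ (after absorbing the $O(1)$ correction). For the mean, I would write $S_{\tilde r,r,z} = h_{B_{\tilde r}(z)}(z) + \sup_{w\in B_r(z)}\bigl(h_{B_{\tilde r}(z)}(w) - h_{B_{\tilde r}(z)}(z)\bigr)$; the first term is a centered Gaussian (it coincides with the circle average $\Gamma_{\tilde r}(z)$), and for the second I would apply Dudley's bound (Theorem \ref{thm:Dudley}) to the centered process $h_{B_{\tilde r}(z)}(\cdot) - h_{B_{\tilde r}(z)}(z)$, using the standard Poisson-kernel estimate $\E(h_{B_{\tilde r}(z)}(w) - h_{B_{\tilde r}(z)}(w'))^2 \lesssim |w-w'|/\tilde r$ for $w,w' \in B_r(z)$ to obtain an entropy integral of order $\sqrt{r/\tilde r}$; the analogous bound for $I$ is symmetric.

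The $\eps = 0$ version then follows by taking $\eps \downarrow 0$ along a suitable subsequence, using Theorem \ref{thm:basic}: both $M(\Gamma_\eps, B_r(z))$ and $M(\Gamma^{B_{\tilde r}(z)}_\eps, B_r(z))$ converge in probability to their limits, while the scalar factor $e^{\gamma S_{\tilde r,r,z}}\tilde r^{\gamma^2/2}$ is already $\eps$-independent, so the deterministic inequalities (conditional on the harmonic part) pass to the limit. The main technical hurdle is the uniform-in-$w$ control of $\E h_{B_{\tilde r}(z)}(w)^2$ in the second paragraph: one needs the correction to $-\log\tilde r$ to be genuinely $O(1)$ rather than a quantity growing in $r/\tilde r$ or in $|w-z|$, so that the scalar $\tilde r^{\gamma^2/2}$ factors cleanly out of the integral. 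This is a standard conformal-radius bookkeeping exercise, but it is the point where the bulk hypothesis $z\in R\D$ with $R<1$ is actually needed.
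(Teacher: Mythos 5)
Your argument is correct and follows essentially the same route as the paper: the Markov decomposition $\Gamma=\Gamma^{B_{\tilde r}(z)}+h_{B_{\tilde r}(z)}$, the mean-value property to pass the harmonic part through the circle average, the Green's-function/conformal-radius comparison producing the $\tilde r^{\gamma^2/2}$ factor, Borell--TIS for sub-Gaussianity, and Dudley's bound combined with the increment estimate of Lemma \ref{lem:ca} for the $O(\sqrt{r/\tilde r})$ mean. The only cosmetic difference is your recentring of $S_{\tilde r,r,z}$ at $h_{B_{\tilde r}(z)}(z)$ before applying Dudley, which is harmless since the process is already centered.
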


Before proving this lemma, let us see how it implies Proposition \ref{prop:SR}:

\begin{proof}[Proof of Proposition \ref{prop:SR}]
Let us just prove the upper bound, the lower bound following similarly.
From Lemma \ref{lem:cmp} with $\tilde r = 2r$, we deduce that
$$\E \left[M(\Gamma_\eps, B_r(z))^q\right] \leq C r^{\frac{q\gamma^2}{2}}\E\left[e^{q\gamma S_{2r,r,z}} M(\Gamma_\eps^{B_{2r}(z)}, B_r(z)))^q\right].$$
Moreover from the same Lemma, $S_{2r, r, z}$ is $-\log r$ sub-Gaussian of finite mean, and independent of $\Gamma^{B_{2r}}$. Thus
$$\E \left[M(\Gamma_\eps, B_r(z))^q\right] \lesssim r^{\frac{\gamma^2(q-q^2)}{2}} \E \left[M(\Gamma_\eps^{B_{2r}(z)}, B_r(z)))^q\right].$$
But by the scale-invariance of the Gaussian free field $\Gamma_\eps^{B_{2r}(z)}$ has the same law as as $\Gamma_{\frac{\eps}{2r}}^{\D}$ and thus by change-of-coordinates Lemma \ref{lem:coc} the claim follows.
\end{proof}

The lemma is also quite direct:

\begin{proof}[Proof of Lemma \ref{lem:cmp}]
Let us just prove the upper bound, the lower bound follows again similarly.
From the Markov decomposition of the GFF for the ball $B_{\tilde r}$, we can write the GFF $\Gamma$ restricted to $B_r = B_r(z)$ as the independent sum of $\Gamma^{B_{\tilde r}}$ and $h_{B_{\tilde r}}$. Now for $w \in B_r$ and $\eps \leq r$, as $h_{B_{\tilde r}}$ is harmonic, we have that its $\eps$ average around $w$ is just equal to $h_{B_{ \tilde r}}(w)$. Thus for $w \in B_r(z)$ we have that $\Gamma_\eps(w) \leq \Gamma_\eps^{B_{\tilde r}}(w) + \sup_{\tilde w \in B_r} h_{B_{\tilde r}}(\tilde w)$. 

Now notice that also $\G_{\D}(w_1,w_2) \leq \G_{B_{2\tilde r}}(w_1,w_2) - \log \tilde r - c_z$ for $w_1,w_2 \in B_{2\tilde r}(z)$ and some constant $c_z$ depending on $z$ and uniform over $z \in R\D$. Plugging this into the definition of $M(\Gamma_\eps^{B_{2\tilde r}(z)}, B_r(z))$ we obtain the inequality.

It remains to argue that $S_{\tilde r, r,z}$ is $-\log \tilde r$ sub-Gaussian with mean order $O(\sqrt{r/\tilde r})$. Sub-Gaussianity follows from the Borell-TIS inequality, stated in Theorem \ref{thm:TIS}, as $h_{B_{\tilde r}}(w)$ inside $B_r$ is a almost surely bounded Gaussian process for which the variance of $h_{B_{\tilde r}}(w)$ is bounded by $-\log \tilde r$ for each $w \in B_r$.

The finite mean part follows from the Dudley entropy bound and the continuity estimate on the circle-average process. Indeed, we have that
$$\E\left[(h_{B_{\tilde r}}(w_1) - h_{B_{\tilde r}}(w_2))^2\right] \leq \E\left[(\Gamma_{\tilde r}(w_1) - \Gamma_{\tilde r}(w_2))^2\right].$$
By Lemma \ref{lem:ca} this is bounded by $\frac{|z-w|}{\tilde r}$. But then a direct calculation shows that covering number for the Gaussian pseudo-metric $d(w_1, w_2) = \sqrt{\E\left[(h_{B_{\tilde r}}(w_1) - h_{B_{\tilde r}}(w_2))^2\right]}$ for the process $h_{B_{2 \tilde r}}$ inside $B_r$ is given by 
$$N(\delta,B_r,d) \asymp \frac{r^2}{\tilde r^2\delta^4},$$
where the implied constants are universal. Evaluating the entropy integral and using Theorem \ref{thm:Dudley} gives that $\E S_{\tilde r, r, z} \leq c\sqrt{r/\tilde r}$.

\end{proof}

We will see that there are several other nice consequences to this lemma.

\subsection{Negative moments}

Firstly, Lemma \ref{lem:cmp} allows to control the lower tails of $M(\Gamma_\eps, \D)$.

\begin{lemma}\label{lem:tail}
Let $\Gamma_\eps$ be as in Theorem \ref{thm:basic} and $\gamma < 2$. Then there are $c, C > 0$, so that for all $\eps >0$ sufficiently small and all $\nu > 0$ $$\P\left(\log M(\Gamma_\eps, \D) \leq - \nu\right) \leq ce^{-C\nu^2}.$$
\end{lemma}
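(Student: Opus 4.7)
The key input is Lemma \ref{lem:cmp}, which compares the GMC mass of a small ball to a Gaussian-tilted version involving an independent zero-boundary GFF. The plan is to iterate this lemma on a sequence of nested dyadic disks to express $\log M(\Gamma_\eps, \D)$ as the sum of many independent sub-Gaussian contributions (one per scale) plus a residual, and then to optimize and bootstrap.

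Setting $U_k := B_{2^{-k}}(0)$, the nested Markov decomposition $\Gamma^{U_k} = \Gamma^{U_{k+1}} + h_k$, where $h_k$ denotes the new independent harmonic piece at scale $2^{-k-1}$, has the crucial feature that, by scale invariance of the GFF, the law of the restriction $h_k|_{U_{k+2}}$ is independent of $k$ up to a trivial rescaling. Combined with Theorems \ref{thm:TIS} and \ref{thm:Dudley} and the continuity estimate of Lemma \ref{lem:ca}, each $J_k := \inf_{U_{k+2}} h_k$ is sub-Gaussian with mean and variance bounded by absolute constants uniform in $k$, and the $(J_k)_{k \geq 0}$ are mutually independent. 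Applying Lemma \ref{lem:cmp} to $\Gamma^{U_k}$ (viewed as a GFF on $U_k$ by scale invariance) with $\tilde r = 2^{-k-1}$, $r = 2^{-k-2}$, $z = 0$ iteratively, together with domain monotonicity at each step, I obtain
\begin{equation*}
\log M(\Gamma_\eps, \D) \;\geq\; -C_1 K^2 + \gamma \sum_{k=0}^{K-1} J_k + \log M(\Gamma^{U_K}_\eps, U_{K+1}),
\end{equation*}
where the $J_k$ are independent of each other and of the residual mass. By Lemma \ref{lem:coc} and the scale invariance of the GFF, the residual has the same law as $4^{-K} M(\tilde\Gamma_{2^K\eps}, B_{1/2}(0))$ for an independent GFF $\tilde\Gamma$ on $\D$.

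Choosing $K$ on the order of $\sqrt{\nu}$ with a small enough constant balances the deterministic loss $C_1 K^2$ against $\nu/3$, while the Gaussian sum $\gamma \sum_{k} J_k$ is sub-Gaussian with variance $O(K)$ and mean $O(K)$, giving $\P(\gamma \sum_k J_k \leq -\nu/3) \leq \exp(-c\nu^{3/2})$, which is even stronger than the Gaussian tail we seek. The main obstacle is the residual term, which is itself a GMC lower-tail probability at a threshold of order $\nu$, so the argument must be closed via a bootstrap on $\nu$. Defining $F(\nu) := \sup_{\eps > 0} \P(\log M(\Gamma_\eps, \D) \leq -\nu)$ and extending naturally to a class of simply connected domains containing $B_{1/2}(0)$ via Lemma \ref{lem:coc}, the above yields a recursion of the form $F(\nu) \leq \exp(-c\nu^{3/2}) + F(\nu')$ with $\nu' \asymp \nu$, which must be iterated so that the threshold eventually reaches the constant regime. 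At that terminal step, Paley-Zygmund applied to the uniform first moment $\E M(\Gamma_\eps, \D) = \pi$ and the uniform second moment bound from Lemma \ref{lem:2ndme} gives $F(O(1)) \leq 1 - \delta$ for some fixed $\delta > 0$. Carefully balancing the accumulated errors together with this terminal contribution so that they recombine to the required $c e^{-C\nu^2}$ tail, rather than merely to an $O(1)$ estimate, is the delicate step that I expect to constitute the main technical difficulty of the proof.
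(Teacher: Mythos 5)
Your multiscale decomposition along a single chain of nested disks centred at $0$ is set up sensibly, but the argument cannot be closed as written, and the obstruction is not merely "delicate bookkeeping" --- it is structural. First, a sign error in your intermediate claim: with $K\asymp\sqrt{\nu}$ the sum $\gamma\sum_k J_k$ has variance $O(\sqrt\nu)$, so its tail bound $e^{-c\nu^{3/2}}$ is \emph{weaker} than the Gaussian tail $e^{-C\nu^2}$ you are after, not stronger ($\nu^{3/2}\ll\nu^2$ for large $\nu$). More importantly, the recursion you arrive at is \emph{additive}: $F(\nu)\le e^{-c\nu^{3/2}}+F(\nu')$ with $\nu'\asymp\nu$. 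Iterating this down to the constant regime accumulates an error $\sum_m e^{-c\nu_m^{3/2}}$ over a decreasing chain $\nu=\nu_0>\nu_1>\dots$, and that sum is dominated by its last terms, where $\nu_m=O(1)$; together with the terminal contribution $F(O(1))\le 1-\delta$ it yields only an $O(1)$ bound. No choice of $K$ or of the step sizes repairs this: a single chain gives one sub-Gaussian variable per scale entering additively, and additive recursions of this type cannot produce a superpolynomially small tail, let alone $e^{-C\nu^2}$.

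The missing idea, which is how the paper proceeds, is to exploit \emph{two independent sub-balls at each step} rather than one nested chain. If $M(\Gamma,\tfrac12\D)$ is small then the masses of $B_{1/8}(1/4)$ and $B_{1/8}(-1/4)$ are both small; after peeling off the harmonic parts via Lemma \ref{lem:cmp} (whose infima contribute the genuinely Gaussian error term $e^{-C_1\nu^2}$), the Markov property makes the two zero-boundary fields in the disjoint balls $B_{1/4}(\pm 1/4)$ independent, and the change-of-coordinates Lemma \ref{lem:coc} identifies each rescaled mass with a fresh copy of the original problem. This yields the \emph{multiplicative} recursion $p(\nu)\le e^{-C_1\nu^2}+p(0.8\nu)^2$, and the squaring of the recursive term is exactly what compensates for the shrinking threshold: along $\nu_m=1000\cdot(1.25)^m$ one shows by contradiction that $p(\nu)\le e^{-a\nu^2}$ for some $a>0$. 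Your Paley--Zygmund endgame (applied to the thinned-out mass, since the raw second moment is infinite for $\gamma\ge\sqrt2$) is a reasonable way to seed such a recursion at $\nu=O(1)$, but without the squaring there is nothing for it to bootstrap.
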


\begin{proof}
It is slightly more convenient to work with $p(\nu) := \P\left(\log M(\Gamma, \frac{1}{2}\D) \leq - \nu\right)$. Suppose that $\nu$ is large enough, say $\nu \geq 1000$. The idea is that for this mass to be small, the mass in both disks of radius $1/8$ around $\pm 1/4$ has to be at least as small. Lemma \ref{lem:cmp} and the Markov property help us to turn this into a useful recursive relation. Indeed, we can write 
$$p(\nu) \leq \P\left[\left(\log M(\Gamma, B_{1/8}(1/4)) \leq - \nu\right)\cap\left(\log M(\Gamma, B_{1/8}(-1/4)) \leq - \nu\right) \right].$$
Now Lemma \ref{lem:cmp} with $\tilde r = 1/4$, $r = 1/8$ and $z = 1/4$ in the limiting case $\eps = 0$, gives that for some absolute constant $C_0$
$$C_0 + \gamma I_{1/8,1/8,1/4} + \log M(\Gamma_\eps^{B_{1/4}(1/4)}, B_{1/4}(1/4)) \leq \log M(\Gamma, B_{1/8}(1/4)).$$
Thus if 
$$\{\log M(\Gamma, B_{1/8}(1/4)) \leq - \nu\}\cap\{\log M(\Gamma, B_{1/8}(-1/4)) \leq - \nu\}$$
happens then either
\begin{enumerate}
\item $I \leq -0.1\nu$, where $I =  \gamma (I_{1/4,1/8,1/4} \wedge I_{1/4,1/8, -1/4})$ is a sub-Gaussian; or
\item $I \geq -0.1\nu$, but we have
$$\{\log M(\Gamma^{B_{1/4}(1/4)}, B_{1/8}(1/4)) \leq -0.85\nu\}\cap\{\log M(\Gamma^{B_{1/4}(-1/4)}, B_{1/8}(-1/4)) \leq -0.85\nu\}$$
where we have absorbed the constant $C_0$ in the term $-0.85\nu$.
\end{enumerate}

The probability of the first term is bounded by $e^{-C_1\nu^2}$ for some $C_1 > 0$. By the change-of-coordinates formula $M(\Gamma^{B_{1/4}(1/4)},B_{1/8}(1/4))$ has the same law as as $M(\Gamma,\frac{1}{2}\D)$ up to some fixed constant. Thus, using further the Markov property, we can bound the probability of the event in (2) by $p(0.8\nu)^2$. Hence for all $\nu \geq 1000$,
$$p(\nu) \leq e^{-C_1\nu^2} + p(0.8\nu)^2.$$
But now consider $\nu_m = 1000*(1.25)^m$. Then suppose for contradiction that $p(\nu_m) \geq e^{-a\nu_m^2}$ with $a < C_1$ for infinitely many $m$. Then from the relation above it follows that $p(\nu_{m-1}) \geq e^{-0.9a\nu_{m-1}^2}$. Iterating, $p(1000) \geq e^{-(0.9)^{m+1}a1000^2}$. But this converges to $1$ as $m \to \infty$, giving a contradiction with $p(1000) < 1$. 
\end{proof}

This immediately implies the existence of all negative moments:

\begin{prop}\label{prop:nm}
Suppose $M(\Gamma_\eps, dz)$ converges to some $M(\Gamma, dz)$ weakly in law. Then for all $\eps >0$ sufficiently small and all $m \geq 0$, we have that $\E \left[M(\Gamma_\eps, \D)^{-m}\right]$ is bounded uniformly in $\eps$.
\end{prop}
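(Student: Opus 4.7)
The plan is to deduce the proposition directly from the sub-Gaussian lower tail bound of Lemma \ref{lem:tail}: since that lemma controls $\P(-\log M(\Gamma_\eps, \D) > \nu)$ by $ce^{-C\nu^2}$, every exponential moment of $-\log M(\Gamma_\eps, \D)$ must be finite, and these are precisely the negative moments of $M(\Gamma_\eps, \D)$.

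Concretely, I would set $L_\eps := -\log M(\Gamma_\eps, \D)$ and write
$$\E\bigl[M(\Gamma_\eps, \D)^{-m}\bigr] = \E\bigl[e^{m L_\eps}\bigr] = \E\bigl[e^{m L_\eps}\1_{L_\eps \leq 0}\bigr] + \E\bigl[e^{m L_\eps}\1_{L_\eps > 0}\bigr].$$
The first term is at most $1$. For the second term I would apply the layer-cake formula (or, equivalently, integration by parts) to rewrite it as
$$m \int_0^\infty \P(L_\eps > \nu)\, e^{m\nu}\, d\nu.$$

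The next step is to plug in the tail bound of Lemma \ref{lem:tail}. Splitting the integral at some threshold $\nu_0$ (chosen so that the bound $\P(L_\eps > \nu) \leq c e^{-C\nu^2}$ applies for $\nu \geq \nu_0$ uniformly in small $\eps$) gives
$$m \int_0^{\nu_0} e^{m\nu}\, d\nu + mc \int_{\nu_0}^\infty e^{m\nu - C\nu^2}\, d\nu.$$
The first piece is an explicit constant depending only on $m$ and $\nu_0$, and the Gaussian term $-C\nu^2$ dominates $m\nu$ for $\nu$ large, so the second integral is also finite, with a bound that is independent of $\eps$.

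There is essentially no obstacle here: the hard work was done in Lemma \ref{lem:tail}, where the sub-Gaussian character of the lower tail was established via the recursive scaling argument. The only small point to be careful about is that the bound in Lemma \ref{lem:tail} holds for $\eps$ small enough and $\nu$ large enough, but since the contributions of $\nu \in [0, \nu_0]$ and of the event $\{L_\eps \leq 0\}$ are bounded by explicit constants, this only affects the choice of constants in the final bound and not the uniformity in $\eps$.
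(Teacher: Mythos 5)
Your proposal is correct and is exactly the argument the paper has in mind: the paper derives Proposition \ref{prop:nm} from Lemma \ref{lem:tail} with the single remark that the Gaussian lower-tail bound ``immediately implies'' all negative moments, and your layer-cake computation is precisely the standard way to fill in that implication. Note that Lemma \ref{lem:tail} as stated already holds for all $\nu>0$, so the splitting at $\nu_0$ is not even needed, but it does no harm.
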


\subsection*{References to the literature}

The scaling relation, Proposition \ref{prop:SR} for the GMC measure is thoroughly discussed in the review \cite{RVrev}, is already derived in \cite{RV}, but goes back much earlier in the special case of multiplicative cascades. As here, the key is to derive the moment estimates for one convenient field and then use Kahane's inequalities to deduce them for all sufficiently nice log-correlated fields. 

The local relation between the exponential of the circle average of the GFF and the GMC mass plays a key role in \cite{DS}, although it is not stated as explicitly as Lemma \ref{lem:cmp}. The argument for proving negative moments presented here comes from \cite{DS}, but there are other possible options: e.g. there is a cute Bootstrap argument originally used on cascades in \cite{Molchan}, then applied in \cite{RV} in the case of GMC measures and explained in detail in an appendix of \cite{RCW}.

\section{Measurability of the GFF w.r.t. the GMC measure at fixed $\gamma$}\label{sec:meas}

Another nice consequence of the philosophy of the last section, i.e. that at the scale $r$, we see a scaled copy of the measure + an independent Gaussian shift of variance $-\log r$, is the measurability of the GFF w.r.t. the GMC measure at some fixed $\gamma$. The first lemma is a one-point estimate and provides another mathematical statement for the heuristic that $e^{\gamma \Gamma_\eps(z)} \eps^{2+\frac{\gamma^2}{2}} \sim M(\Gamma, B_\eps(z))$:

\begin{lemma}[Local comparison]\label{lem:lcl}
Let $R \in (0,1)$ be fixed. Also let $\Gamma_\eps$ be the circle average approximation of the GFF $\Gamma$ and $\gamma < 2$. Then there is some $C > 0$ such that for any $z \in R\D$  and $\eps > 0$ with sufficiently small, we have 
$$\E\left[\left(\Gamma_\eps(z) + (\gamma/2 + 2/\gamma)\log \eps - \frac{1}{\gamma} \log M(\Gamma, B_{\eps}(z)) \right)^2\right] < C.$$
\end{lemma}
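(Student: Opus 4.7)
The plan is to apply Lemma~\ref{lem:cmp} with $\tilde r = 2\eps$ and $r = \eps$ and then use the scale-invariance of Lemma~\ref{lem:coc} to strip off the $\eps$-dependence at the small-scale mass. Assuming $\gamma > 0$ without loss of generality, Lemma~\ref{lem:cmp} in the limiting form gives
$$I \leq \tfrac{1}{\gamma}\log M(\Gamma, B_\eps(z)) - \tfrac{1}{\gamma}\log M(\Gamma^{B_{2\eps}(z)}, B_\eps(z)) - \tfrac{\gamma}{2}\log(2\eps) + O(1) \leq S,$$
where $I, S$ denote respectively the infimum and supremum of $h_{B_{2\eps}(z)}$ over $B_\eps(z)$, and the $O(1)$ term is uniform in $\eps$ and $z \in R\D$. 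Applying Lemma~\ref{lem:coc} to the conformal map $\phi(u) = z + 2\eps u$ together with Proposition~\ref{prop:chr2} shows further that $\log M(\Gamma^{B_{2\eps}(z)}, B_\eps(z)) - 2\log(2\eps)$ has the law of $\log M(\tilde\Gamma^\D, \D/2)$ for a standard unit-disk GFF $\tilde\Gamma^\D$. Combining these ingredients, the quantity to be bounded can be written as
$$\Gamma_\eps(z) + (\gamma/2 + 2/\gamma)\log\eps - \tfrac{1}{\gamma}\log M(\Gamma, B_\eps(z)) = \bigl(\Gamma_\eps(z) - Y\bigr) - \tfrac{1}{\gamma}\bigl[\log M(\Gamma^{B_{2\eps}(z)}, B_\eps(z)) - 2\log(2\eps)\bigr] + O(1),$$
for some random variable $Y \in [I, S]$.

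It then remains to bound the $L^2$-norm of the two stochastic pieces, uniformly in $\eps$ and $z$. For the mass term, by the change-of-coordinates identity above it is equal in law to $\tfrac{1}{\gamma}\log M(\tilde\Gamma^\D, \D/2)$, and the elementary inequality $(\log x)^2 \leq C_\delta(x^\delta + x^{-\delta})$ for small $\delta > 0$ reduces matters to the existence of one small positive moment of $M(\tilde\Gamma^\D, \D/2)$ (immediate from $\E M(\tilde\Gamma^\D, \D/2) = |\D/2|$ and Jensen) and one small negative moment (Proposition~\ref{prop:nm}, noting that its proof via Lemma~\ref{lem:tail} is carried out for the half-disk). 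For the term $\Gamma_\eps(z) - Y$, I would decompose $\Gamma_\eps(z) - Y = [\Gamma_\eps(z) - \Gamma_{2\eps}(z)] + [\Gamma_{2\eps}(z) - Y]$: the first bracket is a Brownian increment of variance $\log 2$, and for the second, the mean-value property gives $\Gamma_{2\eps}(z) = h_{B_{2\eps}(z)}(z)$, so since both $h_{B_{2\eps}(z)}(z)$ and $Y$ lie in $[I, S]$ we have
$$|\Gamma_{2\eps}(z) - Y| \leq S - I \leq 2\sup_{w \in B_\eps(z)}\bigl|h_{B_{2\eps}(z)}(w) - h_{B_{2\eps}(z)}(z)\bigr|.$$

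The main obstacle, and the only place that needs genuine care, is to bound this last supremum in $L^2$ uniformly in $\eps$: note that $I$ and $S$ themselves have variance of order $-\log\eps$, so one cannot hope to treat them separately. The resolution is that after centering at $h_{B_{2\eps}(z)}(z)$, the Gaussian field $\tilde h(w) := h_{B_{2\eps}(z)}(w) - h_{B_{2\eps}(z)}(z)$ has uniformly small fluctuations on $B_\eps(z)$. Indeed, the Markov decomposition together with independence of $\Gamma^{B_{2\eps}(z)}$ and $h_{B_{2\eps}(z)}$ and Lemma~\ref{lem:ca} yield
$$\E[(\tilde h(w_1) - \tilde h(w_2))^2] \leq \E[(\Gamma_\eps(w_1) - \Gamma_\eps(w_2))^2] \leq C\,|w_1 - w_2|/\eps,$$
so the pointwise variance of $\tilde h$ on $B_\eps(z)$ is $O(1)$ and its Dudley entropy integral is uniformly bounded in $\eps$ and $z$ — exactly the estimates already carried out inside the proof of Lemma~\ref{lem:cmp}. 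Borell--TIS (Theorem~\ref{thm:TIS}) then controls the deviation of the supremum from its mean and Dudley's bound (Theorem~\ref{thm:Dudley}) controls its mean, together giving $\E[\sup_{w\in B_\eps(z)}|\tilde h(w)|^2] = O(1)$. Combining the three contributions yields the claimed uniform bound.
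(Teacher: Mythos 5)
Your proposal is correct and follows essentially the same route as the paper's proof: apply Lemma~\ref{lem:cmp} with $\tilde r=2\eps$, $r=\eps$, center the harmonic-part infimum/supremum at $h_{B_{2\eps}(z)}(z)=\Gamma_{2\eps}(z)$ so that Borell--TIS and Dudley (via Lemma~\ref{lem:ca}) give an $\eps$-uniform $L^2$ bound, note that $\Gamma_\eps(z)-\Gamma_{2\eps}(z)$ has bounded variance, and use scale invariance to reduce the mass term to the finite second moment of $\log M(\tilde\Gamma^{\D},\D/2)$. The only difference is cosmetic: you spell out the positive/negative small-moment argument for $\E[(\log M)^2]<\infty$, which the paper leaves as a direct verification.
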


Observe that up to an additive constant $(\gamma/2 + 2/\gamma)\log \eps$ is just the mean of $\log M(\Gamma, B_{\eps}(z))$. 

\begin{proof}
From Lemma \ref{lem:cmp} with $0.5\tilde r = r = \eps$ we see that it suffices to prove that 
$$\E\left[\left(\Gamma_\eps(z) + 2\log \eps - I_{2\eps,\eps,z} - \frac{1}{\gamma} \log M(\Gamma^{B_{2\eps}}, B_{\eps}(z)) \right)^2\right] < C,$$
and the same inequality with $S_{2\eps,\eps,z}$ in place of $I_{2\eps,\eps,z}$. We will only prove the former, as the proof is exactly the same.

From the same lemma, we know that $E_1 := I_{\eps,\eps,z} - h_{B_{2\eps}}(z) = \inf_{w \in B_\eps} h_{B_{2\eps}}(w) - h_{B_{2\eps}}(z)$ is a $100-$sub-Gaussian of finite mean independent of $\Gamma^{B_{2\eps}}$. Also, $E_2 := \Gamma_\eps(z) - h_{B_{2\eps}}(z)$ is a Gaussian of bounded variance of mean zero, say $E_2$. Thus
$ \E\left[(E_1 + E_2)^2\right] < C'.$
Hence bounding 
$$\E\left[\left(E_1 + E_2 + 2\log \eps - \log  M(\Gamma^{B_{2\eps}}, B_{\eps}(z)) \right)^2\right]$$
by
$$2\E\left[(E_1 + E_2)^2\right] + 2\E\left[\left(2\log \eps - \log  M(\Gamma^{B_{2\eps}}, B_{\eps}(z)) \right)^2\right],$$ we are left to show that the latter is bounded by some constant.

But we know that $M(\Gamma^{B_{2\eps}(z)}, B_{\eps}(z))$ has the law of $\eps^2 M(\Gamma_{1/2},\frac{1}{2}\D)$. Moreover one can directly verify that $\E\left[\left(\log M(\Gamma^{1/2},\frac{1}{2}\D)\right)^2\right]$ is finite.
\end{proof}

Thus around a point $z$ the $\eps$-circle-average of the GFF and the properly rescaled and centred version of $\log M(\Gamma, B_{\eps}(z)$ are equal up to a random variable of finite variance. In fact, it comes out that the covariance of these differences decay very rapidly, giving effectively a field that has at any point bounded variance and is independent at distance $\geq \eps$. Such a field converges to $0$ in the space of distributions, showing that the properly rescaled and centred version of $\log M(\Gamma, B_{\eps}(z))$ converges to the GFF itself.
\begin{prop}\label{prop:mes}
Let $\Gamma_\eps$ be the circle average approximation of the GFF $\Gamma$ and $\gamma < 2$ and let $m_\eps(z) = \frac{1}{\gamma}\log M(\Gamma, B_{\eps}(z)$. Then $\Gamma_\eps(z) - m_\eps(z) + \E m_\eps(z)$ converges to $0$ in the space of distributions in probability. In particular $\Gamma$ is measurable w.r.t. $M^\gamma(\Gamma,dz)$ for any fixed $\gamma$ for which $M^\gamma(\Gamma_\eps,dz)$ is non-trivial.
\end{prop}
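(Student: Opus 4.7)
Set $X_\eps(z) := \Gamma_\eps(z) - m_\eps(z) + \E m_\eps(z)$. The measurability claim follows once $X_\eps \to 0$ in the space of distributions in probability: indeed $m_\eps - \E m_\eps$ is a measurable functional of $M^\gamma(\Gamma,\cdot)$ for each $\eps$ while $\Gamma_\eps \to \Gamma$ in distributions in probability, so the convergence gives $m_\eps - \E m_\eps \to \Gamma$, exhibiting $\Gamma$ as a measurable function of $M^\gamma(\Gamma,\cdot)$. By a Theorem \ref{thm:conv}-style density argument, it suffices to show $(X_\eps, f) \to 0$ in $L^2$ for each $f$ in a countable dense family of smooth test functions supported in some $R\D$ with $R < 1$. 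Since $X_\eps$ is centered with $\sup_z \E[X_\eps(z)^2] \leq C$ by Lemma \ref{lem:lcl}, I split
\[
\E\left[(X_\eps, f)^2\right] = \iint f(z) f(w)\, \E[X_\eps(z) X_\eps(w)]\, dz\, dw
\]
at the threshold $|z - w| = 4\eps$. The short-range contribution is bounded by $C \|f\|_\infty^2 |\mathrm{supp}\, f| \eps^2 \to 0$ via Cauchy--Schwarz and the uniform variance bound.

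For the long-range part, I use the Markov decomposition $\Gamma = \Gamma^{B_{2\eps}(z)} + h_{B_{2\eps}(z)}$. Since $h_{B_{2\eps}(z)}$ is harmonic, its $\eps$-circle average around $z$ equals $h_{B_{2\eps}(z)}(z)$, and by independence of the two summands the mass factorises as
\[
M(\Gamma, B_\eps(z)) = \int_{B_\eps(z)} e^{\gamma h_{B_{2\eps}(z)}(w) - \frac{\gamma^2}{2}\E h_{B_{2\eps}(z)}(w)^2}\, M(\Gamma^{B_{2\eps}(z)}, dw).
\]
Pulling out the factor at $w = z$ shows that the center value $h_{B_{2\eps}(z)}(z)$ cancels inside $X_\eps(z)$, yielding a decomposition $X_\eps(z) = A_\eps(z) + R_\eps(z)$ where
$A_\eps(z) := \Gamma^{B_{2\eps}(z)}_\eps(z) - \gamma^{-1}\log M(\Gamma^{B_{2\eps}(z)}, B_\eps(z)) + c_\eps$
depends only on $\Gamma^{B_{2\eps}(z)}$, and $R_\eps(z)$ is the Jensen-type remainder encoding the oscillation of $h_{B_{2\eps}(z)}(\cdot) - h_{B_{2\eps}(z)}(z)$ on $B_\eps(z)$, weighted against the probability measure $\mu_z = M(\Gamma^{B_{2\eps}(z)}, \cdot)/M(\Gamma^{B_{2\eps}(z)}, B_\eps(z))$. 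Both summands have uniformly bounded variance by the Borell--TIS/Dudley bounds in the proof of Lemma \ref{lem:cmp}. For $|z - w| > 4\eps$, disjointness of $B_{2\eps}(z)$ and $B_{2\eps}(w)$ together with the Markov property make $\Gamma^{B_{2\eps}(z)}$ independent of $(\Gamma^{B_{2\eps}(w)}, \Gamma|_{\partial B_{2\eps}(w)})$ and symmetrically, so three of the four cross covariances vanish and $\E[X_\eps(z)X_\eps(w)] = \mathrm{Cov}(R_\eps(z), R_\eps(w))$.

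The main obstacle is to control this residual covariance, since the sole remaining source of dependence is that $\Gamma|_{\partial B_{2\eps}(z)}$ and $\Gamma|_{\partial B_{2\eps}(w)}$ are jointly Gaussian with covariance $\approx -\log|z-w|$. The saving grace is that the oscillation $h_{B_{2\eps}(z)}(\cdot) - h_{B_{2\eps}(z)}(z)|_{B_\eps(z)}$ is orthogonal to constants by construction, so in the Poisson kernel representation the constant part of the log covariance drops out; what remains is the leading correction $-\log|\xi - \xi'| + \log|z-w| = O(\eps/|z-w|)$ uniformly for $\xi \in \partial B_{2\eps}(z), \xi' \in \partial B_{2\eps}(w)$, closely related to the continuity estimate of Lemma \ref{lem:ca}. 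A first-order Gaussian interpolation then upgrades this into $\mathrm{Cov}(R_\eps(z), R_\eps(w)) = O(\eps/|z-w|)$ (using that $R_\eps$ is a Lipschitz-in-$L^2$ functional of the oscillation Gaussian vector with uniformly bounded Lipschitz norm). Integrating $f(z)f(w) \cdot \eps/|z-w|$ over $|z-w| > 4\eps$ yields $O(\eps)$, completing the proof that $(X_\eps, f) \to 0$ in $L^2$ and hence of the proposition.
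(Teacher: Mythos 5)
Your argument is correct in outline, and it reaches the conclusion by a genuinely different route than the paper. The paper also splits $\E[(X_\eps,f)^2]$ into a near-diagonal part (handled by Cauchy--Schwarz and Lemma \ref{lem:lcl}) and an off-diagonal part, but for the latter it works at the \emph{mesoscale} $\sqrt{\eps}$ rather than $2\eps$: decomposing with respect to $B_{\sqrt{\eps}}(z)$, the only part of $m_\eps(z)-\Gamma_\eps(z)$ that is correlated with the data near $w$ (for $|z-w|>2\sqrt{\eps}$) is the oscillation $S_{\sqrt{\eps},\eps,z}-h_{B_{\sqrt{\eps}}(z)}(z)$ of the harmonic part over the much smaller ball $B_\eps(z)$, and because the scale ratio $\eps/\sqrt{\eps}$ tends to $0$ this term is itself small in $L^2$ (mean $O(\eps^{1/4})$ by Dudley, variance $O(\sqrt{\eps})$ by Borell--TIS), so a crude Cauchy--Schwarz bound on the cross term suffices (Lemma \ref{lem:cor}, giving the rate $\eps^{1/5}$). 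You instead keep the scale ratio fixed, so your remainder $R_\eps(z)=-\gamma^{-1}\log\int e^{\gamma\,\mathrm{osc}_z}d\mu_z$ has variance of order one, and you must genuinely \emph{decorrelate}: you exploit that $\mathrm{osc}_z$ and $\mathrm{osc}_w$ are jointly Gaussian with cross-covariance given by a second difference of $\G_D$ (your recentering at $z$ and at $w$ kills the constant mode on both sides, so the bound is in fact $O(\eps^2/|z-w|^2)$, better than the $O(\eps/|z-w|)$ you state, though either suffices), and then run a Kahane-type interpolation, using that the gradient of $R_\eps(z)$ in the oscillation variable is (minus) a probability measure, hence has total variation $1$. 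This buys a sharper rate ($O(\eps^2\log(1/\eps))$ versus the paper's $O(\eps^{1/5})$) at the cost of the extra interpolation machinery; the paper's mesoscale trick avoids any decorrelation estimate entirely. The one step you should spell out is the interpolation itself: it must be justified for functionals of continuum Gaussian fields (by finite-dimensional approximation, exactly as in the proof of Proposition \ref{prop:KCI}), and you should check there that the independence structure $\Gamma^{B_{2\eps}(z)}\perp\Gamma^{B_{2\eps}(w)}\perp h_{B}$ (Markov decomposition for the union of the two balls) lets you condition on the zero-boundary pieces and interpolate only the pair of oscillation fields. Your reduction of the measurability claim and the treatment of the near-diagonal region match the paper.
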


The correlation estimate is as follows:

\begin{lemma}\label{lem:cor}
Let $R \in (0,1)$ be fixed, $\Gamma_\eps$ be the circle average approximation of the GFF $\Gamma$ and $\gamma < 2$. Denote $m_\eps(z) = \frac{1}{\gamma} \log M(\Gamma, B_{\eps}(z))$. Then for any $z,w \in R\D$, of distance larger than $2\sqrt{\eps}$, we have that
$$\E\left[\left(\Gamma_\eps(z) - m_\eps(z) + \E m_\eps(z)\right)\left(\Gamma_\eps(w) - m_\eps(w) + \E m_\eps(w)\right)\right] < C\eps^{1/5}.$$
\end{lemma}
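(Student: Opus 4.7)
The plan is to show that $X_\eps(z) := \Gamma_\eps(z) - m_\eps(z) + \E m_\eps(z)$ decouples into a ``local'' piece supported on $B_\rho(z)$ plus a small boundary error, where $\rho := \sqrt{\eps}$. Since $|z-w| \ge 2\rho$, the disks $B_\rho(z)$ and $B_\rho(w)$ are disjoint, so applying the Markov property (Theorem \ref{prop:GFFprp}) to $B := B_\rho(z) \cup B_\rho(w)$ yields a mutually independent decomposition $\Gamma = G_z + G_w + \tilde H$, where $G_z := \Gamma^{B_\rho(z)}$, $G_w := \Gamma^{B_\rho(w)}$, and $\tilde H := h_B$ is random and harmonic on $B$. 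In particular, on $B_\rho(z)$ one has $\Gamma = G_z + \tilde H$ and $\tilde H\bigl|_{B_\rho(z)}$ coincides with $h_{B_\rho(z)}$ from the single-ball decomposition.

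I would then introduce the local proxy
$$Y_\eps(z) := G_{z,\eps}(z) - \tfrac{1}{\gamma}\log M(G_z, B_\eps(z)) + \E\bigl[\tfrac{1}{\gamma}\log M(G_z, B_\eps(z))\bigr],$$
which is mean-zero and depends only on $G_z$; symmetrically $Y_\eps(w)$ depends only on $G_w$. Since $\tilde H$ is harmonic in $B_\rho(z)$, its circle average at scale $\eps < \rho$ around $z$ equals $\tilde H(z)$, giving $\Gamma_\eps(z) = G_{z,\eps}(z) + \tilde H(z)$. For the mass term I would apply the GMC Markov property (a consequence of Proposition \ref{prop:chr2}, conditionally on $\tilde H$ and after smooth approximation): on $B_\rho(z)$ the density of $M(\Gamma,\cdot)$ with respect to $M(G_z,\cdot)$ is $e^{\gamma \tilde H(u) - \frac{\gamma^2}{2}\Var(\tilde H(u))}$. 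Factoring out the value at $u=z$ in the integral over $B_\eps(z)$ gives
$$\tfrac{1}{\gamma}\log M(\Gamma, B_\eps(z)) = \tilde H(z) - \tfrac{\gamma}{2}\Var(\tilde H(z)) + \tfrac{1}{\gamma}\log M(G_z, B_\eps(z)) + R_z,$$
with $|R_z| \lesssim \delta_z + \eps$, where $\delta_z := \sup_{u \in B_\eps(z)}|\tilde H(u) - \tilde H(z)|$ and the $\eps$ term reflects the smooth variation of $\Var(\tilde H(u))$ on $B_\eps(z)$. Substituting and noting that both $X_\eps(z)$ and $Y_\eps(z)$ are mean-zero (so all deterministic constants absorb into their own expectations), one gets $X_\eps(z) = Y_\eps(z) + \xi_z$ with $\E\xi_z = 0$ and $\|\xi_z\|_{L^2} \lesssim \|\delta_z\|_{L^2}$; the analogous statement holds at $w$.

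The crucial oscillation estimate $\|\delta_z\|_{L^2} = O(\eps^{1/4})$ is obtained exactly as in the end of the proof of Lemma \ref{lem:cmp}: on $B_\rho(z)$, $\tilde H = h_{B_\rho(z)}$, so by Lemma \ref{lem:ca}, $\E(\tilde H(u_1) - \tilde H(u_2))^2 \le C|u_1-u_2|/\rho$ for $u_1,u_2 \in B_\eps(z)$. The Dudley entropy bound (Theorem \ref{thm:Dudley}) gives $\E\delta_z \lesssim \sqrt{\eps/\rho} = \eps^{1/4}$, and Borell--TIS (Theorem \ref{thm:TIS}) controls the fluctuation via the sub-Gaussian parameter $\sup_u \Var(\tilde H(u)-\tilde H(z)) \le C\sqrt{\eps}$, so indeed $\|\xi_z\|_{L^2} = O(\eps^{1/4})$.

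Finally, expanding
$$\E[X_\eps(z) X_\eps(w)] = \E[Y_\eps(z) Y_\eps(w)] + \E[Y_\eps(z)\xi_w] + \E[\xi_z Y_\eps(w)] + \E[\xi_z \xi_w],$$
the first term vanishes because $G_z \perp G_w$ and both $Y$'s are centred; each cross term vanishes because $\xi_w$ is a function of $(G_w,\tilde H)$, which is jointly independent of $G_z$, while $Y_\eps(z)$ has mean zero; Cauchy--Schwarz handles the last term, giving $|\E[\xi_z \xi_w]| \le \|\xi_z\|_{L^2}\|\xi_w\|_{L^2} = O(\sqrt\eps)$, comfortably beating the claimed $\eps^{1/5}$. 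The main technical obstacle is the middle paragraph --- using the GMC Markov property to isolate the single ``good'' term $\tilde H(z)$ cleanly from a genuinely $L^2$-small remainder $R_z$ of size $O(\eps^{1/4})$, rather than the $O(\sqrt{\log 1/\eps})$ error that one would obtain from a crude application of Lemma \ref{lem:cmp} at the scale $\rho = \sqrt\eps$.
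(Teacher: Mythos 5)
Your proof is correct and rests on the same core mechanism as the paper's: a Markov decomposition at the intermediate scale $\sqrt{\eps}$, a Lemma \ref{lem:cmp}-type comparison expressing $m_\eps(z)$ as the harmonic part at $z$ plus a purely local term plus an oscillation error, and Dudley/Borell--TIS control of that oscillation at order $\eps^{1/4}$. The packaging differs in a way worth noting. The paper decomposes only around $z$ and plays the local part at $z$ against the full quantity at $w$; because Lemma \ref{lem:cmp} gives only one-sided sandwich bounds $I\le\cdot\le S$, this forces the $\vee\,0$ / $\wedge\,0$ case analysis and produces an $\eps^{1/4}$ error from a Cauchy--Schwarz in which one factor is merely $O(1)$ (via Lemma \ref{lem:lcl}). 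You instead decompose symmetrically on $B_{\sqrt{\eps}}(z)\cup B_{\sqrt{\eps}}(w)$, write each side as a centred local proxy plus a genuinely mean-zero remainder $\xi=\E R-R$ of $L^2$-size $O(\eps^{1/4})$, after which every cross term vanishes exactly by independence and only $\E[\xi_z\xi_w]=O(\sqrt{\eps})$ survives; this is cleaner and gives a slightly better exponent. The one point to tighten is your ``GMC Markov property'' $M(\Gamma,du)=e^{\gamma\tilde H(u)-\frac{\gamma^2}{2}\Var(\tilde H(u))}M(G_z,du)$ on $B_{\sqrt{\eps}}(z)$: it should be justified through the circle-average approximation (exactly as in the proof of Lemma \ref{lem:cmp}) rather than by citing Proposition \ref{prop:chr2}, since $\tilde H\notin\mathcal H^1_0$; but this identity is precisely what underlies the paper's perturbed measure $\tilde M$, so nothing essential is missing.
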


Let us show how this implies the proposition. 

\begin{proof}[Proof of Proposition \ref{prop:mes}]
Denote $R_\eps(z) := \Gamma_\eps(z) - m_\eps(z) + \E_m\eps(z)$. Notice that by taking $R \to 1$, it suffices to show $R_\eps(z)$ converges to zero as a distribution on $R\D$ for any fixed $R < 1$.
 
To do this we just bound $\E (R_\eps(z), f)^2$ by $$\int_{|z-w| \geq 2\sqrt{\eps}} \E\left[R_\eps(z) R_\eps(w)\right] f(z)f(w) dzdw + \int_{|z-w| \leq 2\sqrt{\eps}} \E\left[R_\eps(z)R_\eps(w)\right] f(z)f(w) dzdw.$$
The first term is bounded by a constant times $\eps^{1/5}$ by Lemma \ref{lem:cor}. In the second term, we use Cauchy-Schwartz to bound 
$$\E\left[R_\eps(z)R_\eps(w)\right]^2  \leq \E\left[R_\eps(z)^2\right]\E\left[R_\eps(w)^2\right],$$
and thus applying Lemma \ref{lem:lcl} we see that the integrand is bounded by some absolute constant. Thus the second term is of order $\eps$ and the proposition follows.
\end{proof}

The proof of the correlation estimate also follows using a version of Lemma \ref{lem:cmp}:

\begin{proof}[Proof of Lemma \ref{lem:cor}]
Recall that $\E \Gamma_\eps(z) = 0$. Hence it suffices to prove that
$$\E\left[\left(m_\eps(z)-\Gamma_\eps(z)\right)\left(m_\eps(w)-\Gamma_\eps(w)\right)\right] < \E m_\eps(z)\E m_\eps(w) +C\eps^{1/5}.$$
Roughly speaking, this says that all the correlation between $m_\eps(z)$ and $m_\eps(w)$ is contained in the correlation between $\Gamma_\eps(w)$ and $\Gamma_\eps(z)$. To show this, we will decompose the terms into correlated and decorrelated parts using a slightly more precise version of the comparison in Lemma \ref{lem:cmp}. Indeed, from the Markov decomposition of the GFF for the ball $B_{\sqrt{\eps}}(z)$, we have:
\begin{equation}\label{eq:correst}
I_{\sqrt{\eps},\eps,z} + \frac{1}{\gamma}\log \tilde M(\Gamma^{B_{\sqrt{\eps}}(z)},B_\eps(z)) \leq m_\eps(z) \leq S_{\sqrt{\eps},\eps,z} + \frac{1}{\gamma}\log \tilde M(\Gamma^{B_{\sqrt{\eps}}(z)},B_\eps(z)),
\end{equation}
where $I_{\sqrt{\eps},\eps,z}$ and $S_{\sqrt{\eps}, \eps, z}$ are as in Lemma \ref{lem:cmp} and $\tilde M(\Gamma, dw) = M(\Gamma, (\frac{\CR(w,D)}{\CR(w,B_{\sqrt{\eps}}(z)})^{\gamma^2/2}dz)$ is a GMC measure w.r.t. a finite perturbation of Lebesgue measure.
Thus we can write
$$m_\eps(z) -\Gamma_\eps(z) \leq \tilde S_{\sqrt{\eps},\eps,z} + \frac{1}{\gamma}\log \tilde M(\Gamma^{B_{\sqrt{\eps}}(z)},B_\eps(z)) - \Gamma_\eps^{B_{\sqrt{\eps}}(z)}(z),$$
where $\tilde S_{\sqrt{\eps},\eps, z} = S_{\sqrt{\eps},\eps,z} - h_{B_{\sqrt{\eps}}(z)}(z)$.
Now bound $\E\left[(m_\eps(z)-\Gamma_\eps(z))\left((m_\eps(w)-\Gamma_\eps(w))\vee 0\right)\right]$ by $$\E\left[(\tilde S_{\sqrt{\eps},\eps,z} +\frac{1}{\gamma}\log \tilde M(\Gamma^{B_{\sqrt{\eps}}(z)},B_\eps(z)) - \Gamma_\eps^{B_{\sqrt{\eps}}(z)}(z))\left((m_\eps(w)-\Gamma_\eps(w))\vee 0\right)\right].$$
Now by Cauchy-Schwarz
$$\E\left[\tilde S_{\sqrt{\eps},\eps,z}\left((m_\eps(w)-\Gamma_\eps(w))\vee 0\right)\right] \leq \sqrt{\E \left[\tilde S_{\sqrt{\eps},\eps,z}^2\right]\E \left[\left((m_\eps(w)-\Gamma_\eps(w))\vee 0\right)^2\right]}.$$
From Lemma \ref{lem:cmp} we know the mean of $\tilde S_{\sqrt{\eps},\eps,z}$ is bounded by $c\eps^{1/4}$. By Lemma \ref{lem:ca}, 

and by Borell-TIS inequality, its variance is also bounded by $\sqrt{\eps}$.
Thus the first term is bounded by $\eps^{1/4}$ . The second term is bounded by a constant by Lemma \ref{lem:lcl}.

On the other hand, $\frac{1}{\gamma}\log \tilde M(\Gamma^{B_{\sqrt{\eps}}(z)},B_\eps(z)) - \Gamma_\eps^{B_{\sqrt{\eps}}(z)}(z)$ is independent of $(m_\eps(w)-\Gamma_\eps(w))$. Moreover, from the lower bound in Equation \eqref{eq:correst} and the estimate on the mean of $I_{\sqrt{\eps},\eps,z}$ we have
$$\E\left[(\frac{1}{\gamma}\log \tilde M(\Gamma^{B_{\sqrt{\eps}}(z)},B_\eps(z)) - \Gamma_\eps^{B_{\sqrt{\eps}}(z)}(z))\right] \leq \E m_\eps + c\eps^{1/4}.$$
Hence we can bound
$$\E\left[(\frac{1}{\gamma}\log \tilde M(\Gamma^{B_{\sqrt{\eps}}(z)},B_\eps(z)) - \Gamma_\eps^{B_{\sqrt{\eps}}(z)}(z))\left((m_\eps(w)-\Gamma_\eps(w))\vee 0\right)\right]$$
by
$$\E m_\eps(z)\E\left[\left((m_\eps(w)-\Gamma_\eps(w))\vee 0\right)\right]+c'\eps^{1/5},$$
where $c'$ is some finite constant. Thus (redefining the constant $c$)
$$\E\left[(m_\eps(z)-\Gamma_\eps(z))\left((m_\eps(w)-\Gamma_\eps(w))\vee 0\right)\right] \leq c\eps^{1/5} + \E m_\eps(z)\E\left[\left((m_\eps(w)-\Gamma_\eps(w))\vee 0\right)\right].$$
We get similarly, by considering the second inequality in Lemma \ref{lem:ca}, that
$$\E\left[(m_\eps(z)-\Gamma_\eps(z))\left((m_\eps(w)-\Gamma_\eps(w))\wedge 0\right)\right] \leq c\eps^{1/5} + \E m_\eps(z)\E\left[\left((m_\eps(w)-\Gamma_\eps(w))\wedge 0\right)\right].$$
And summing together we obtain
$$\E\left[(m_\eps(z)-\Gamma_\eps(z))(m_\eps(w)-\Gamma_\eps(w))\right] \leq c\eps^{1/5} + \E m_\eps(z)\E\left[(m_\eps(w)-\Gamma_\eps(w))\right].$$
Using again that $\E\Gamma_\eps(w) = 0$, the lemma follows. 
\end{proof}

\subsection*{References to the literature}

Measurability of the GFF w.r.t. the GMC measure at a fixed $\gamma$ comes from \cite{BerSheSun}. As mentioned above in Section \ref{sec:basic}, the measurability of the GFF given $M^\gamma(\Gamma,dz)$ over a whole interval of $\gamma$ around $0$ can be considered folklore. 

\section{Kahane convexity inequality}\label{sec:Kah}

One of the key tools in GMC theory is a comparison inequality, reminding vaguely Slepian type of inequalities for Gaussian processes both in its statement and proof. It is mostly used to transform results known for a GMC measure w.r.t. some Gaussian fields, to other fields with a slightly different, but comparable covariance kernel. For example, using these inequalities one could transform many of the results of this paper, to other $\log$-correlated fields in two dimensions. 

\begin{prop}[Kahane convexity inequality]\label{prop:KCI}
Let $\Gamma_1$, $\Gamma_2$ be two point-wise defined Gaussian fields on $D$ such that their covariance kernels satisfy $\C_1(z,w) \leq \C_2(z,w)$ for any $z,w \in D$. Then for any convex function $f$ defined on $\R^+$, we have that 
$$\E \left[ f(M(\Gamma_1, D))\right] \leq \E \left[ f(M(\Gamma_2, D))\right].$$
Moreover, the opposite inequality holds for concave functions.
\end{prop}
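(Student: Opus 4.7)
The plan is to prove the inequality by a Gaussian interpolation in the spirit of Slepian's proof, combined with a finite-dimensional Riemann-sum approximation of the masses. Without loss of generality I may assume $f$ is smooth with bounded derivatives; the general convex (respectively concave) case will follow by standard mollification and truncation at the end. First I would discretize: pick a partition $\{U_i\}_{i=1}^n$ of $D$ with representatives $z_i \in U_i$ and weights $p_i = |U_i|$, and consider the Riemann sums $M^{(n)}(\Gamma, D) := \sum_{i=1}^n p_i \exp\!\bigl(\gamma \Gamma(z_i) - \tfrac{\gamma^2}{2}\C(z_i, z_i)\bigr)$, which converge to $M(\Gamma_j, D)$ almost surely as the mesh vanishes, since $\Gamma_j$ is pointwise defined (and can be taken continuous). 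It therefore suffices to prove the inequality at the level of these approximants for each fixed partition.

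Fix such a partition and realize $X := (\Gamma_1(z_i))_{i=1}^n$ and $Y := (\Gamma_2(z_i))_{i=1}^n$ as independent Gaussian vectors on a common probability space, with covariance matrices $A_{ij} = \C_1(z_i,z_j)$ and $B_{ij} = \C_2(z_i, z_j)$. Define the interpolation $Z_t := \sqrt{1-t}\, X + \sqrt{t}\, Y$ for $t \in [0,1]$, so that $Z_0$ has the law of $X$, $Z_1$ the law of $Y$, and the covariance of $Z_t$ is $(1-t)A + tB$. Setting
\[
M_t \;:=\; \sum_{i=1}^{n} p_i \exp\!\left(\gamma Z_{t,i} - \tfrac{\gamma^2}{2}\bigl((1-t)A_{ii} + tB_{ii}\bigr)\right), \qquad \phi(t) := \E\left[f(M_t)\right],
\]
it remains to show $\phi'(t) \geq 0$ when $f$ is convex.

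The heart of the argument is to apply Gaussian integration by parts $\E[X_i G(X,Y)] = \sum_j A_{ij}\,\E[\partial_{X_j} G]$ (and the analogous identity for $Y_i$, with $B_{ij}$) to the terms in $\phi'(t)$ arising from $dZ_t/dt = -\tfrac{1}{2\sqrt{1-t}}X + \tfrac{1}{2\sqrt{t}}Y$. A direct bookkeeping — using that $\partial_{X_j} M_t = \gamma\sqrt{1-t}\, p_j \exp(\gamma Z_{t,j} - \cdots)$ and that the derivative of the normalization constant contributes $-\tfrac{\gamma^2}{2}(B_{ii}-A_{ii})$ — shows that all contributions involving $f'(M_t)$ cancel exactly, leaving the clean formula
\[
\phi'(t) \;=\; \frac{\gamma^2}{2} \sum_{i,j=1}^{n} p_i p_j\, (B_{ij} - A_{ij})\, \E\!\left[f''(M_t)\, e^{\gamma Z_{t,i} + \gamma Z_{t,j} - \tfrac{\gamma^2}{2}\left((1-t)(A_{ii}+A_{jj}) + t(B_{ii}+B_{jj})\right)}\right].
\]
Since the exponential is positive, $f'' \geq 0$ by convexity, and $B_{ij} \geq A_{ij}$ by hypothesis, every summand is non-negative, so $\phi'(t) \geq 0$; integrating in $t$ from $0$ to $1$ yields the inequality for $M^{(n)}$. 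The concave case follows at once by applying the convex case to $-f$.

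The main obstacle is passing back to the continuum limit and removing the smoothness/boundedness of $f$. Both steps are essentially routine once one has the right uniform integrability: for bounded continuous $f$ dominated convergence handles the limit directly, while for $f$ of polynomial growth one invokes existence of sufficiently many moments of $M(\Gamma_j, D)$ (established separately, and at worst via an auxiliary comparison to a tractable reference field). The smoothness hypothesis is removed by approximating a general convex $f$ uniformly on compacta by smooth convex $f_k$ via convolution with a smooth mollifier and passing to the limit in both sides of the inequality.
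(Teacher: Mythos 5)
Your proof is correct, but it follows a genuinely different route from the one in the paper. You discretize the mass into Riemann sums, run the Slepian-type interpolation $Z_t=\sqrt{1-t}\,X+\sqrt{t}\,Y$ on the resulting finite-dimensional Gaussian vectors, and invoke Gaussian integration by parts to arrive at the explicit formula
$\phi'(t)=\tfrac{\gamma^2}{2}\sum_{i,j}p_ip_j(B_{ij}-A_{ij})\,\E[f''(M_t)e^{\cdots}]\ge 0$; your bookkeeping of the cancellation of the $f'$ terms against the derivative of the normalization is right. The paper uses the same interpolation but stays in the continuum and deliberately avoids the second derivative: for each fixed $z$ it writes $Y_\lambda(w)=c_zZ_\lambda(z)+W_\lambda(w)$ with $W_\lambda$ independent of the one-dimensional Gaussian $X=Z_\lambda(z)$, observes that $f'(M(Y_\lambda,D))$ is an increasing function of $X$ (because $f'$ is increasing by convexity and $\E Z_\lambda(z)Y_\lambda(w)=\tfrac12(\C_2-\C_1)\ge 0$), and concludes with the elementary claim that $\E[Xg(X)]\ge 0$ for $X$ centered and $g$ increasing. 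The paper explicitly mentions your finite-dimensional integration-by-parts route in its literature notes (attributing it to the appendix of Robert--Vargas), so it is a recognized alternative. The trade-off: your version yields the cleanest quantitative identity for $\phi'$ and is the textbook Slepian argument, but it needs $f''$ (hence the mollification step) and a discretization/limit passage that requires continuity of the sample paths and a uniform-integrability or truncation argument for unbounded convex $f$ — all routine, as you note, but real work; the paper's version needs only that $f'$ be increasing, works directly on the continuum fields, and replaces the integration-by-parts computation with a one-line covariance (Chebyshev association) inequality. One small caution for your write-up: when truncating a general convex $f$ on $\R^+$, replace $f$ beyond a level $K$ by its tangent line at $K$ (which preserves convexity) and pass to the limit by monotone convergence, rather than a blunt cutoff, which would destroy convexity.
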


The proof is quite slick, but the basic strategy is simple: just to verify that the corresponding expectation increases, if we increase the covariance kernel infinitesimally. 

\subsection*{Instructive case} Let us first treat a very simple case, which however provides an outline for the proof:
\begin{itemize}
\item Assume that $\Gamma_2(z) = \Gamma_1(z) + X$ for some independent Gaussian $X$ with variance $\C_X$. 
\end{itemize}
In this case we can interpolate between $\Gamma_1$ and $\Gamma_2$ using $Y_\lambda = \Gamma_1 + \lambda X$ with $\lambda \in [0,1]$. Set $F(\lambda) = \E \left[ f(M(Y_\lambda, D)\right]$. It suffices to prove that $F'(\lambda) \geq 0$. Writing out we have
$$F'(\lambda) = \E \left[\int_D dz (X - \lambda \C_X e^{\lambda X - \frac{\lambda^2}{2}\C_X}e^{\Gamma_1(z) - \frac{\lambda^2}{2}\C_1(z,z)}f'(M(Y_\lambda, D))\right].$$
We can take the expectation inside the integral. Using the independence of $X$ and $\Gamma_1$ we can write the integrand for fixed $z$:
$$\E_{\Gamma_1}\left[e^{\Gamma_1(z) - \frac{\lambda^2}{2}\C_1(z,z)}\E_X\left((X - \lambda \C_X)e^{\lambda X - \frac{\lambda^2}{2}\C_X}f'(M(Y_\lambda, D))\right)\right],$$
where by $\E_X, \E_{\Gamma_1}$ we mean taking expectation w.r.t. only $X$ or $\Gamma_1$ respectively.

It thus suffices to show that $\E_X\left((X - \lambda \C_X)e^{\lambda X - \frac{\lambda^2}{2}\C_X}f'(M(Y_\lambda, D))\right) \geq 0$. For fixed $\Gamma_1$, by convexity of $f$ the function $f'(M(Y_\lambda, D))$ is increasing with the value of $X$. On the other hand $\E_X\left[(X - \lambda \C_X)e^{\lambda X - \frac{\lambda^2}{2}\C_X}\right] = 0$ as it is equal to the derivative in $\lambda$ of $\E_X e^{\lambda X - \frac{\lambda^2}{2}\C_X}$. Thus we conclude with the following elementary claim:

\begin{claim}\label{clm:elem}
Let $X$ be a zero mean random variable and $f:\R \to \R$ be increasing. Then $\E Xf(X) \geq 0$.
\end{claim}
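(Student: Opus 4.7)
The plan is to reduce the claim directly to the hypothesis $\E X = 0$ via the elementary identity
\[
\E[Xf(X)] = \E[X(f(X)-f(0))] + f(0)\,\E[X] = \E[X(f(X)-f(0))],
\]
which is valid as soon as integrability allows the split. The integrand on the right is pointwise non-negative: when $X\geq 0$, monotonicity gives $f(X)\geq f(0)$, so both factors are $\geq 0$; when $X<0$, both factors are $\leq 0$. Taking expectations preserves non-negativity and the claim is established in one line.

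An alternative route I would mention, because it matches the Slepian/Kahane symmetrization philosophy of the surrounding section, is to introduce an independent copy $X'$ of $X$. Then $\E[Xf(X')] = \E[X]\,\E[f(X')] = 0$ by independence, so $\E[Xf(X)] = \E[X(f(X)-f(X'))]$; swapping the roles of $X$ and $X'$ and averaging yields
\[
2\,\E[Xf(X)] = \E\bigl[(X-X')(f(X)-f(X'))\bigr] \geq 0,
\]
since monotonicity of $f$ forces $X-X'$ and $f(X)-f(X')$ to carry the same sign.

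There is no real obstacle here; the only point worth flagging is the implicit integrability hypothesis $\E|Xf(X)|<\infty$ (which in particular makes $\E X = 0$ meaningful). In the Kahane application of Proposition \ref{prop:KCI} this is automatic, since $X$ is Gaussian and convexity of $f$ together with the exponential weights keep the relevant moments of $f'(M(Y_\lambda,D))$ under control.
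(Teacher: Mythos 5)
Your proof is correct. The paper states this claim without proof (it is invoked as "elementary"), so there is nothing to compare against; either of your two arguments settles it. The first one — writing $\E[Xf(X)]=\E[X(f(X)-f(0))]$ and observing that the integrand is pointwise non-negative since $X$ and $f(X)-f(0)$ always share a sign — is the shortest route, and it has the added virtue that the right-hand side is well-defined in $[0,\infty]$ even without assuming $\E|Xf(X)|<\infty$. The symmetrization identity $2\,\E[Xf(X)]=\E[(X-X')(f(X)-f(X'))]$ is the classical Chebyshev correlation inequality and fits the spirit of Section \ref{sec:Kah}, but it genuinely needs the integrability you flag in order to justify $\E[Xf(X')]=0$. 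Your remark that in the application the relevant variable is Gaussian (after the exponential tilting) so integrability is automatic is exactly the right thing to note.
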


\subsection*{General case} In the general case one cannot expect to have such a simple interpolation. However, one can always interpolate from $\Gamma_1$ to $\Gamma_2$ via, say, $$Y_\lambda(z) = \sqrt{1-\lambda} \Gamma_1(z) + \sqrt{\lambda} \Gamma_2(z).$$
Let $\C_\lambda(z,w)$ be the covariance of $Y_\lambda(z)$. Write also $Z_\lambda(z) = \frac{d}{d\lambda}Y_\lambda(z)$. We then have:

$$F'(\lambda) = \E \left[\int_D dz (Z_\lambda(z) - \E Y_\lambda(z) Z_\lambda(z)) e^{Y_\lambda(z) - \frac{1}{2}\C_\lambda(z)}f'(M(Y_\lambda, D))\right],$$
and we want to show that $F'(\lambda) \geq 0$.
To do this, we want to again bring the expectation inside the integral and integrate over $Z_\lambda(z)$ for some fixed $z$. In this respect, write $$Y_\lambda(w) = c_zZ_\lambda(z) + W_\lambda(w),$$ where $W$ is an Gaussian field on $D$ with covariance $\C_W$ and independent of $Z_\lambda(z)$, and $c_z \E Z_\lambda(z)^2 = \E Z_\lambda(z) Y_\lambda(w)$. 

Denoting $Z_\lambda(z) = X$ we can write the integrand for fixed $z$ as
$$\E_{W_\lambda}\left[e^{W_\lambda(z) - \frac{1}{2}\C_W(z,z)}\E_X\left((X - c_z\C_X)e^{\lambda X - \frac{c_z^2}{2}\C_X}f'(M(Y_\lambda, D))\right)\right].$$ 
Again it suffices to show that $\E_X\left((X - c_z\C_X)e^{\lambda X - \frac{c_z^2}{2}\C_X}f'(M(Y_\lambda, D))\right) \geq 0$.
As $\E Z_\lambda(z) Y_\lambda(w) = \frac{1}{2}(\C_2(z,w) - \C_1(z,w)) \geq 0$, we deduce that $f'(M(Y_\lambda,D))$ is increasing with $X$. Thus as also $\E_X\left[(X - c_z\C_X)e^{\lambda X - \frac{c_z^2}{2}\C_X}\right] = 0$, we can again apply Claim \ref{clm:elem} to conclude.
\subsection{Positive moments and the multiscale approach to uniform integrability}\label{sec:UI1}

We now show how to obtain uniform integrability in the region $\gamma \in [\sqrt{2}, 2)$. In fact we will see that $\E M(\Gamma,f)^q < \infty$ if $q < 4/\gamma^2$. The idea is to use local scaling relations and near independence of far-away regions. 

\begin{prop}\label{prop:posm1}[Existence of positive moments]
Let $D$ be a finite domain, fix $\gamma < 2$ and let $\Gamma_\eps$ be the circle-average approximation. Then for any $0 < q < 4/\gamma^2$, there is some $C_q$ such that for all $\eps \geq 0$ we have $\E \left[M(\Gamma_\eps,D)^q\right] < C_q$.
\end{prop}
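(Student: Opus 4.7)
Fix $q\in(1,4/\gamma^2)$. The plan is to combine Kahane's convexity inequality with a near-independent decomposition of $M(\Gamma,\D)$ over a fine cover by small balls, exploiting the identity $\zeta(q) - 2 = (q-1)(2 - q\gamma^2/2) > 0$, which holds precisely on $(1,4/\gamma^2)$. Thus as soon as we can separate the mass of $\D$ into $\asymp r^{-2}$ roughly independent pieces each with mean $\asymp r^2$ and $q$-th moment $\asymp r^{\zeta(q)}$, a Rosenthal-type inequality will kill the blow-up.

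Concretely, I would cover $\D$ by $N\asymp r^{-2}$ balls $B_i=B_r(z_i)$, chosen so that the doubled balls $B_{2r}(z_i)$ have bounded chromatic number for their intersection graph. By Lemma~\ref{lem:cmp} applied with $\tilde r=2r$, one has $M(\Gamma,B_i) \lesssim r^{\gamma^2/2} e^{\gamma S_i} M(\Gamma^{B_{2r}(z_i)},B_i)$; by the Markov decomposition (Theorem~\ref{prop:GFFprp}(ii)) and the change-of-coordinates Lemma~\ref{lem:coc} the second factor is distributed as $r^2$ times an i.i.d.\ copy of a reference GMC mass $\tilde M$ on $\tfrac{1}{2}\D$ for a zero-boundary GFF, and these copies are independent across disjoint groups of $B_{2r}$'s. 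Using Kahane's inequality with the convex $f(x)=x^q$ to decouple the correlated harmonic suprema $S_i$ into an i.i.d.\ collection (see obstacle below), the problem reduces to bounding $\E(\sum_i M_i')^q$ with $(M_i')$ independent, $\E M_i' \asymp r^2$, and $\E(M_i')^q \asymp r^{\zeta(q)}\,\E\tilde M^q$. For $q\in(1,2]$ the classical von Bahr--Esseen inequality yields
\[
\E \Bigl(\sum_i M_i'\Bigr)^q \lesssim \Bigl(\sum_i \E M_i'\Bigr)^q + \sum_i \E(M_i')^q \lesssim 1 + N r^{\zeta(q)} = 1 + r^{\zeta(q)-2},
\]
uniformly bounded in $r$; the extension to $q>2$ amounts to iterating the decomposition a finite number of times and applying Rosenthal's higher-moment inequality, together with a bootstrap that recycles the finiteness of lower moments already established in this way.

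The main obstacle is the decoupling step. The harmonic suprema $S_i$ on adjacent balls are strongly correlated Gaussians with variance $\asymp \log(1/r)$, and a naive Kahane-domination by a common independent shift of variance $\log(1/r)$ would spuriously lose a factor $r^{-q(q-1)\gamma^2/2}$ that ruins the estimate. The cleanest implementation, and the route I would take, is to perform the Kahane comparison once and for all at the field level: replace $\Gamma$ by a dyadic Gaussian cascade $\tilde \Gamma = \sum_k X_k$ piecewise constant on cells of side $2^{-k}$ with $X_k \sim N(0, \log 2 + c_0)$ independent across cells (and $c_0$ large enough that $\C_{\tilde\Gamma}\geq\C_\Gamma$). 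By Proposition~\ref{prop:KCI} this reduces the task to bounding $\E M^\gamma(\tilde\Gamma,\D)^q$. The cascade GMC obeys an exact distributional recursion $M \stackrel{d}{=} \sum_{j=1}^{4} W_j M^{(j)}$ with i.i.d.\ copies $M^{(j)}$ and i.i.d.\ lognormal weights $W_j = 4^{-1}e^{\gamma X_0^{(j)}-\gamma^2(\log 2 + c_0)/2}$, and existence of the $q$-th moment reduces to the arithmetic condition $\sum_j \E W_j^q < 1$, which after absorbing $c_0$ into a fine enough cascade base becomes exactly $q<4/\gamma^2$; finiteness of $\E M^q$ then follows by standard cascade induction (iterating the recursion and using $\sum_j \E W_j^q <1$ to contract).
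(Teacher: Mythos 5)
Your overall strategy (tile the domain into $\asymp r^{-2}$ cells, use the Markov property to make the zero-boundary parts independent, and win from $\zeta(q)>2$ on $(1,4/\gamma^2)$) is the same as the paper's, and you correctly identify the decoupling of the harmonic suprema $S_i$ as the crux. But your proposed resolution has a genuine gap: the Kahane comparison with a dyadic cascade goes in a direction that is impossible to arrange. For the convex function $f(x)=x^q$, Proposition \ref{prop:KCI} bounds $\E M(\Gamma,\D)^q$ from above only by a field $\tilde\Gamma$ whose covariance dominates $\C_\Gamma$ \emph{pointwise}. A cascade that is piecewise constant on dyadic cells cannot do this: take $z,w$ on opposite sides of a level-one cell boundary with $|z-w|=\delta$ tiny; their cascade covariance is $O(1)$ (they share only finitely many ancestor cells) while $\C_\Gamma(z,w)=-\log\delta+O(1)\to\infty$. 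No choice of $c_0$ or of the branching base repairs this boundary defect. The comparison that does hold is the reverse one, $\C_{\mathrm{casc}}\leq \C_\Gamma + C$, which transfers moment bounds \emph{from} the GFF \emph{to} the cascade --- useless here. (The classical fix, due to Kahane and used in \cite{RV}, is to dominate $\C_\Gamma$ by an exactly scale-invariant \emph{continuous} kernel such as $\log_+(T/|z-w|)$ plus an independent Gaussian of constant variance, and to run the recursion for that field; a literal cascade will not do.)

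The paper closes the gap without any decoupling of the suprema. It keeps $e^{\gamma S_i}$ fully correlated with the rest of the mass and bounds the cross term $\E\bigl[e^{\gamma S_i}M_\eps(A_1)^{q-1}\bigr]$ by H\"older's inequality with exponents $p_1=2q-1$ and $p_2=(2q-1)/(2q-2)$: the first factor is an exponential moment of a sub-Gaussian of variance $\log n+c$, hence finite, and the second requires only the $(q-\tfrac12)$-th moment of the total mass, which is known when $q<3/2$. The diagonal term is handled by the scaling relation (Proposition \ref{prop:SR}) plus a legitimate Kahane comparison between the genuinely ordered kernels $\C_{n\eps/2}$ and $\C_\eps$, yielding the self-improving inequality $\E M_\eps(S)^q\leq Cn^{2-\zeta(q)}\E M_\eps(S)^q + C_n$, which closes since $\zeta(q)>2$. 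Finally the artificial restriction $q<3/2$ is removed by a bootstrap: once the $q_0$-th moment is finite, the H\"older step works for all $q<q_0+\tfrac12$, and iterating reaches every $q<4/\gamma^2$. If you want to salvage your argument, replace the cascade either by this H\"older-plus-bootstrap device or by a continuous exactly scaling comparison field; as written, the decoupling step fails.
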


In fact one can also show that this moment bound is sharp, i.e. that the moments for $q \geq 4/\gamma^2$ are all infinite.

\begin{proof} Fix $1 < q < 4/\gamma^2$. For convenience we will this time work on the unit square $S = [0,1]^2$.  
Fix a parameter $n \in 2\N$ very large, but finite, to be determined later and let $0 < \eps < 0.5n^{-1}$.. Tile the square into $n^2$ little squares $S_i$ of side-length $n^{-1}$ and consider the 4 disjoint
unions of squares that contain every other little square both vertically and horizontally. Call these unions $A_1,
A_2, A_3, A_4$. As before, denote $M_\eps(B) = M(\Gamma_\eps, B)$ for any Borelian $B$. Then by inequality of means and symmetry:
$$ \E \left[M_\eps(S)^q\right] \leq 10\E \left[M_\eps(A_1)^q\right].$$
We will now work with $A_1$ only. We have
\[ \E \left[M_\eps(A_1)^q\right] =\E \left[\left(\sum_{S_i \cap A_1
   \neq \emptyset}M_\eps(S_i)\right) \left( \sum_{S_j \cap A_1 \neq \emptyset} M_\eps
   (S_j) \right)^{q-1}\right] \]
Further for any fixed $i$:
\[ \left( \sum_{S_j \cap A_1 \neq \emptyset} M_\eps (S_j) \right)^{q-1}
   \lesssim M_\eps(S_i)^{q-1} + \left(\sum_{S_j \cap A_1 \neq \emptyset, S_j \neq S_i} M_\eps (S_j)\right)^{q-1}. \]
Thus
\[ \E \left[M_\eps(A_1)^{q}\right] \lesssim \sum_{S_i \cap A_1 \neq
   \emptyset} \E \left[M_\eps (S_i)^{q}\right] + \sum_{S_i \cap A_1 \neq
   \emptyset} \E \left[M_\eps (S_i) \left(\sum_{S_j \cap A_1 \neq \emptyset, S_j \neq S_i} M_\eps (S_j)\right)^{q-1}\right], \]
Now for any $S_i \subset A_1$ let $\partial B_i$ be the boundary of the square of side-length $1.5/n$, that has the same centre $z_i$ as $S_i$. Then $\partial B_i$ separates $S$ into a open square containing $S_i$ and an open set containing all the other squares. Denote by $B_i = S \backslash \partial B_i$. By a slight generalization of Lemma \ref{lem:cmp}, we have
   $$M_\eps(S_i) \leq Cn^{-\frac{\gamma^2}{2}}e^{\gamma S_{1.5/n,1/n,z_i}} M(\Gamma_\eps^{B_i},S_i),$$
   where $S_{1.5/n,1/n,z_i}$ are sub-Gaussians of variance $\log n + c$ and bounded mean.
   But the term $M(\Gamma_\eps^{B_i},S_i)$ is independent of $\left(\sum_{S_j \cap A_1 \neq \emptyset, S_j \neq S_i} M_\eps (S_j)\right)$. Thus
   $$\E \left[M_\eps (S_i) \left(\sum_{S_j \cap A_1 \neq \emptyset, S_j \neq S_i} M_\eps (S_j)\right)^{q-1}\right]\leq Cn^{-\frac{\gamma^2}{2}}\E\left[M(\Gamma_\eps^{B_i},S_i)\right]\E\left[e^{\gamma S_{1.5/n, 1/n, z_i}}M_\eps(A_1)^{q-1}\right].$$
 Using H\"older inequality with $p_1 = 2q-1$ and $p_2 = \frac{2q-1}{2q-2}$ we have
   $$\E\left[e^{\gamma S_{1.5/n, 1/n, z_i}}M_\eps(A_1)^{q-1}\right] \leq \left(\E\left[e^{\gamma(2q-1) S_{1.5/n, 1/n, z_i}}\right]\right)^{1/p_1} \left(\E\left[M_\eps(A_1)^{q-1/2}\right]\right)^{1/p_2}.$$ 
   The first term is just an exponential moment of a sub-Gaussian and thus is finite. The second term is finite as long as $1 < q < 3/2$, as the mean of $M_\eps(A_1)$ is finite. On the other hand $\E\left[M(\Gamma_\eps^{B_i},S_i)\right] = n^{-2}$, and thus for some constant $C_n$ depending on $n$ but not $\eps$ we have:
\[ \E \left[M_\eps(A_1)^{q}\right] \leq \sum_{S_i \cap A_1 \neq
   \emptyset} \E \left[M_\eps (S_i)^{q}\right] + C_n.\]
Moreover by a slight generalization of Proposition \ref{prop:SR} to squares and by noticing that the upper bound also holds for squares near the boundary, we have
\[ \E \left[M(\Gamma_\eps, S_i)^q\right] \lesssim n^{-\zeta(q)} \E \left[M(\Gamma_{n\eps/2},S)^q\right]. \]
As using Kahane's inequality again we have $\E \left[M(\Gamma_{n\eps/2},S)^q\right] \leq \E \left[M(\Gamma_{\eps},S)^q\right]$, we conclude that 
for $q < 4/\gamma^2 \wedge 3/2$, we can bound 
\[ \E \left[M(\Gamma_\eps, S)^q\right] \leq Cn^{2-\zeta(q)} \E \left[M(\Gamma_{\eps/2},S)^q\right] + C_n.\]
But for $\gamma < 2$ and $q < 4/\gamma^2$, we have that $\zeta(q) = (2 + \gamma^2 /
2)q - \gamma^2 q^2 / 2 > 2$ and thus for $n$ large the term $Cn^{2+\gamma^2 (q^2-q) / 2
-2q}$ is smaller than $1$. Hence the claim follows in the case $q < 4/\gamma^2 \wedge 3/2$. 

Finally, notice that we used the condition $q < 3/2$ only when we needed a uniform bound on the $(q-1/2)$-th moment for all $\eps \geq 0$. So after having established the claim for $q =q_0$, we can deduce the existence of moments for all $q < q_0 + 1/2$ and by recursion for all $q <  4/\gamma^2 $.
\end{proof}

In fact, this implies also that the same moments are finite for the orthogonal approximation. Indeed, let $q < 4/\gamma^2$ and recall from Section \ref{sec:basic} that for any continuous bounded $f$ we have $M(\Gamma^n,f) = \lim_{\eps \to 0} \E\left[M(\Gamma_\eps,f)|\Gamma^n\right]$. Thus also $M(\Gamma^n,f)^q = \lim_{\eps \to 0} \E\left[M (\Gamma_\eps,f)|\Gamma^n\right]^q$. Now by Fatou's lemma
$$\E \left[ M(\Gamma^n, dz)^q\right] \leq \liminf_{\eps \to 0} \E \left[ \E\left(M(\Gamma_\eps,f)|\Gamma^n\right)^q\right].$$
From Jensen's inequality $$\E \left[ \E\left(M(\Gamma_\eps,f)|\Gamma^n\right)^q\right] \leq \E \left[ \E\left(M(\Gamma_\eps,f)^q|\Gamma^n\right)\right] = \E\left[M(\Gamma_\eps,f)^q\right].$$
But this is bounded by $C_q$ by the proposition above. Hence in fact Proposition \ref{prop:posm1} also gives 
$$\E\left[ M(\Gamma^n,f)^q\right] \leq C_q$$
and thus convergence of the GMC measure for the orthogonal sequence over the whole sub-critical regime.

\begin{rem}
	Roughly the underlying philosophy of the proof is similar to the treatment of the $L^2$ phase in Section \ref{sec:thickpoints}. There we also ignored correlations at distances larger than $\eps'$, which would compare to $1/n$ in the proof of the proposition above. Moreover, to control the second moment at small distances, we there also effectively use the fact that the field is locally equal to a zero boundary GFF + Gaussian with the variance given by the logarithm of the scale. However, it is not entirely clear how to extract the existence of moments directly from the argument in Section \ref{sec:thickpoints}.
\end{rem}

Using Kahane's convexity inequality, Proposition \ref{prop:nm} and Proposition \ref{prop:SR} we now deduce positive and negative moments for all log-correlated Gaussian fields in 2D.
\begin{cor}[Positive and negative moments for log-correlated fields]
Let $\Phi$ be a Gaussian field on $D$ with covariance kernel $\C_\Phi(z,w)$ of the form $\C_\Phi(z,w) = - \log |z-w| + g(z,w)$, with $g$ continuous and bounded. Then we have that for all $q < 4/\gamma^2$, 
$$\E \left[M(\Phi, B_r(z))^q\right] \asymp_q r^{\zeta(q)}.$$
\end{cor}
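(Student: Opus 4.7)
The plan is to transfer the scaling asymptotic already known for the GFF to the general log-correlated field $\Phi$, by sandwiching $\Phi$'s covariance between two GFF-plus-constant fields and applying Kahane's convexity inequality (Proposition~\ref{prop:KCI}). On the GFF side I would first assemble the ingredients: Proposition~\ref{prop:SR} reduces the asymptotic $\E[M(\Gamma, B_r(z))^q] \asymp_q r^{\zeta(q)}$ to a uniform two-sided bound on $\E[M(\Gamma_{\eps/2r}, \tfrac{1}{2}\D)^q]$ in $\eps$. Uniform upper bounds are supplied by Proposition~\ref{prop:posm1} for $1 \leq q < 4/\gamma^2$, by Proposition~\ref{prop:nm} for $q \leq 0$, and by Jensen applied to $q = 1$ (whose mean is just Lebesgue measure) for the interpolating range $q \in (0,1)$; the matching lower bounds come from the non-triviality of the limiting measure.

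For the comparison step, I would fix a compact $K \subset D$ containing all $B_r(z)$ of interest. Continuity of $g$ and $g_D$ supplies finite constants $c_- \leq c_+$ with $\G_D(z,w) + c_- \leq \C_\Phi(z,w) \leq \G_D(z,w) + c_+$ on $K \times K$. I would then introduce two independent centred Gaussian \emph{constants} (i.e.\ single random variables, not fields) $X_+$ and $X_-$, with variances $c_+ \vee 0$ and $|c_-| \vee 0$ respectively and independent of everything else, so that $\C_{\Gamma + X_+}(z,w) \geq \C_\Phi(z,w)$ and $\C_{\Phi + X_-}(z,w) \geq \G_D(z,w)$ on $K \times K$. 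Applying Proposition~\ref{prop:KCI} at the level of pointwise approximations with the convex function $x \mapsto x^q$ (valid for $q \leq 0$ or $q \geq 1$) on the domain $B_r(z)$ then yields $\E[M(\Phi, B_r(z))^q] \leq \E[M(\Gamma + X_+, B_r(z))^q]$ and $\E[M(\Gamma, B_r(z))^q] \leq \E[M(\Phi + X_-, B_r(z))^q]$. The key elementary identity is that adding an independent Gaussian constant $X$ of variance $\sigma^2$ to a field $\Psi$ multiplies the approximate GMC measure by the scalar $e^{\gamma X - \gamma^2 \sigma^2/2}$, so $\E[M(\Psi + X, A)^q] = e^{q(q-1)\gamma^2 \sigma^2/2}\, \E[M(\Psi, A)^q]$, a finite positive constant depending only on $q, \gamma, \sigma^2$ and not on $r$ or $z$. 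Chaining the two Kahane inequalities with this identity produces $\E[M(\Phi, B_r(z))^q] \asymp_q \E[M(\Gamma, B_r(z))^q]$, and combining with the GFF estimate of the first paragraph completes the proof for $q \leq 0$ or $q \geq 1$. The remaining range $q \in (0,1)$ is treated identically with the concave half of Proposition~\ref{prop:KCI}: Kahane's inequality reverses direction, but combined with the same scalar identity it still produces matching two-sided bounds.

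The main subtlety I would have to watch is that one cannot ``subtract'' an independent Gaussian constant from a field, so the sandwich cannot be achieved by a single shift on a single side: for the upper bound one must place the slack $X_+$ on the $\Gamma$ side (to inflate its covariance to dominate $\Phi$'s), whereas for the lower bound one must place $X_-$ on the $\Phi$ side. Beyond this bookkeeping, and a quick verification that all ratios of $q$-th moments stay uniformly bounded away from $0$ and $\infty$ as $r \to 0$, the argument is a direct application of the machinery already assembled in the paper, and uses nothing specific to two dimensions.
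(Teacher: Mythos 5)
Your proposal is correct and follows exactly the route the paper intends: the paper's own justification is the single sentence citing Kahane's convexity inequality together with Propositions \ref{prop:nm}, \ref{prop:SR} and \ref{prop:posm1}, and your write-up is a faithful expansion of that, including the standard trick of placing the independent Gaussian slack on whichever side needs its covariance inflated and absorbing it via the scalar identity $\E[M(\Psi+X,A)^q]=e^{q(q-1)\gamma^2\sigma^2/2}\,\E[M(\Psi,A)^q]$. The only points you gloss over (applying Kahane at the level of mollified approximations where both fields are pointwise defined, and the uniform-in-$\eps$ lower bound on $\E[M(\Gamma_\eps,\tfrac12\D)^q]$ for $q\in(0,1)$ via non-triviality of the limit) are exactly the routine verifications the paper also leaves implicit.
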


\subsection*{References to the literature}

Kahane's convexity inequality and its proof go back to \cite{Kah}. There is also a possible proof going through finite-dimensional Gaussians: in this setting one uses the same interpolation, but concludes using Gaussian integration by parts. Finally one uses continuity to extend the inequality to continuous Gaussian fields. See e.g. appendix of \cite{RV}.

The method of using Kahane's convexity inequality and such a decomposition to prove existence of moments might well be new, at least in such condensed form. The result on moments itself goes back to the seminal article by Kahane \cite{Kah}.

\section{A version of the KPZ relation}\label{sec:KPZ}

In the setting of GMC measures, the KPZ relation can be interpreted as a quadratic equation linking the Euclidean dimension of a fractal set, with its dimension under the GMC measure. There is in fact a plenitude of KPZ relations corresponding to the exact meaning of the dimension both on the Euclidean and the GMC measure side. 

Here we use the so-called scaling dimensions, as it is easy to state, has an intuitive meaning in terms of planar maps and also allows us to use previous techniques.

Let $X$ be some fractal set contained in a compact subset of $\D$.
\begin{itemize}
\item The Euclidean scaling dimension $d_s$ defined (when it exists) through $\Leb(z: B_r(z) \cap X \neq \emptyset)=r^{	d_s + o_r(1)}$.
\item The GMC scaling dimension $q_s$ is defined (when it exists) through $\E M(z: Q_r(z) \cap X \neq \emptyset) = r^{q_s + o_r(1)}$, where $Q_r(z)$ denotes the Euclidean ball around $z$ that satisfies $M(\Gamma,Q_r(z)) = r$. 
\end{itemize}

\begin{prop}[KPZ relation]\label{prop:KPZ}
Let $\gamma < 2$ and suppose that the Euclidean scaling dimension of a compactly supported fractal set $X \subset \D$ exists and is equal to $d_s$. Then the GMC scaling dimension $q$ also exists, and moreover we have the following KPZ relation:
$$d_s = (2-\gamma^2/2)q_s + \gamma^2q_s^2/2.$$
\end{prop}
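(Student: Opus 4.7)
The plan is to use the rooted-measure characterization of $M$ (Lemma \ref{lem:rm}) to reduce the expected GMC mass of the GMC-neighborhood of $X$ to a Lebesgue integral of a one-point probability under a shifted field, then express the Euclidean radius of $Q_r(z)$ in terms of the circle average via Lemma \ref{lem:lcl}, and finally evaluate the integral by a Laplace/large-deviation argument in the Euclidean distance to $X$; the quadratic term in the KPZ formula will arise from the Gaussian rate function of the circle-average process. Concretely, writing $d(z) := \dist(z,X)$, Lemma \ref{lem:rm} gives
$$\E M(\{z: Q_r(z) \cap X \neq \emptyset\}) = \int_D \P\bigl(\rho^{(z)}_r \geq d(z)\bigr)\, dz,$$
where $\rho^{(z)}_r$ is the Euclidean radius of the ball around $z$ whose mass under $\tilde M := M(\Gamma + \gamma \G_D(z,\cdot), \cdot)$ equals $r$. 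An adaptation of Lemma \ref{lem:lcl} to the shifted field (using that the extra Cameron--Martin drift contributes $-\gamma \log \rho + O(1)$ to $\tilde\Gamma_\rho(z)$) yields
$$\log \tilde M(B_\rho(z)) = \gamma \Gamma_\rho(z) + (2 - \gamma^2/2)\log \rho + O(1),$$
with $L^2$-bounded error. Writing $d(z) = r^a$, the event $\{\rho^{(z)}_r \geq d(z)\} = \{\tilde M(B_{d(z)}(z)) \leq r\}$ is therefore equivalent, up to $O(1)$, to the Gaussian event $\{\gamma \Gamma_{r^a}(z) \leq [1 - a(2-\gamma^2/2)]\log r\}$.

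Next, I would partition $D$ by distance to $X$: by definition of $d_s$, $\Leb(\{z: d(z) \in (r^{a+\Delta}, r^a]\}) = r^{a d_s + o(1)}$ for any fixed $\Delta > 0$. Under $\P$, $\Gamma_{r^a}(z)$ is centered Gaussian of variance $-a \log r + O(1)$. Setting $\alpha := 2-\gamma^2/2$, for $a > a_0 := 1/\alpha$ the threshold $[1 - a\alpha]\log r$ is positive and the Gaussian event has probability $\Theta(1)$; for $a < a_0$ the threshold is negative and the standard lower-tail estimate gives probability $r^{\phi(a)+o(1)}$ with $\phi(a) = (1 - a\alpha)^2/(2 a \gamma^2)$. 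Summing over the $O(\log(1/r))$ dyadic scales (absorbed in the $o(1)$), the integral is of order $r^{\min\bigl(\inf_{a \leq a_0}[a d_s + \phi(a)],\; \inf_{a \geq a_0} a d_s\bigr) + o(1)}$.

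Both infima are computed explicitly. On $a \geq a_0$ the minimum is at $a = a_0$, giving $d_s/\alpha$. On $a \leq a_0$, $f(a) := a d_s + \phi(a)$ is convex with $f'(a^*) = 0$ at $a^* = 1/\sqrt{\alpha^2 + 2\gamma^2 d_s} \leq a_0$, so substituting,
$$q_s := f(a^*) = \frac{\sqrt{\alpha^2 + 2\gamma^2 d_s} - \alpha}{\gamma^2},$$
and a direct computation shows that this satisfies $d_s = \alpha q_s + (\gamma^2/2) q_s^2$ and $q_s \leq d_s/\alpha$, so $q_s$ is the overall minimum. A matching lower bound follows from the lower bound on the Gaussian lower tail together with a lower bound on the Lebesgue mass of the layer at scale $r^{a^*}$.

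The main obstacle I anticipate is sharpening the one-point estimate: Lemma \ref{lem:lcl} only provides $L^2$-bounds on the error in $\log \tilde M(B_\rho(z)) = \gamma \Gamma_\rho(z) + (2-\gamma^2/2)\log \rho + O(1)$, whereas the Laplace argument requires this error to have at most sub-polynomial-in-$r$ tails so that the sharp exponent $\phi(a)$ survives. The idea is to produce such tails by applying the Markov decomposition of $\Gamma$ in a ball $B_{\rho^{1-\delta}}(z)$ (as in Lemma \ref{lem:cmp}), writing $\tilde \Gamma$ there as a zero-boundary GFF plus a bounded harmonic piece plus the explicit $-\gamma \log|z-\cdot|$ singular part, and using Borell--TIS (Theorem \ref{thm:TIS}) on the harmonic piece, so that the random error is sub-Gaussian of variance $O(\log(1/\delta))$ and contributes only an $r^{o(1)}$ factor. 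A secondary point is the multi-scale dependence of $\rho^{(z)}_r$ on $\Gamma$, handled by the same decomposition: replacing $\tilde M(B_{d(z)}(z))$ by its zero-boundary-GFF version in a slightly larger reference ball makes $\Gamma_{d(z)}(z)$ the only relevant source of fluctuation at the target scale.
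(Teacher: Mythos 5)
Your proposal is correct in outline and reaches the right exponent, but it replaces the paper's key quantitative step by a genuinely different computation. Both arguments start identically: the rooted-measure identity (Lemma \ref{lem:rm}) turns $\E M(z: Q_r(z)\cap X\neq\emptyset)$ into $\int_D \P(\radS(\tilde Q_r(z))\geq \dist(z,X))\,dz$ with $\tilde Q_r(z)=Q_r(\Gamma+\gamma\G_D(z,\cdot))$, and both reduce the fluctuations of $\log \tilde M(B_\rho(z))$ to the one-dimensional circle-average process via the Markov decomposition of Lemma \ref{lem:cmp}, with the error terms controlled by Borell--TIS and the super-polynomial lower-tail bound of Lemma \ref{lem:tail} (your flagged ``main obstacle'' is exactly what the paper also has to handle, and your proposed fix is the same mechanism the paper uses). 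The divergence is in how the exponent is extracted. The paper (Lemma \ref{lem:KPZ}) computes the moments $\E[\radS(\tilde Q_r(0))^{\theta(q)}]=r^{q+o_r(1)}$ \emph{exactly}, by observing that $(2s)^{\lambda^2/2}e^{\lambda\Gamma_{2s}(0)}$ is a mean-one martingale in $-\log s$ and applying optional stopping at the first scale where the auxiliary mass $(2s)^{2-\gamma^2/2}e^{\gamma\Gamma_{2s}(0)}$ hits $r$; the Euclidean dimension then enters only through $\E[\radS^{d_s+o(1)}]$, and a separate translation estimate (Lemma \ref{lem:KPZaux}) reduces to $z=0$. You instead fix the scale $d(z)=r^a$, estimate the Gaussian lower tail of $\gamma\Gamma_{r^a}(z)$ to get $\P(\tilde M(B_{r^a}(z))\leq r)=r^{\phi(a)+o(1)}$, and optimize $a\mapsto a d_s+\phi(a)$ over the layers of $D$ — a Laplace/Legendre-transform derivation. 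Your route makes the multifractal mechanism (Lebesgue size of the layer versus tail cost of the circle average) transparent, builds the uniformity in $z$ directly into the one-point tail estimate so that Lemma \ref{lem:KPZaux} is not needed as a separate statement, and your algebra checks out ($f(a^*)=(\sqrt{\alpha^2+2\gamma^2 d_s}-\alpha)/\gamma^2$ does solve $d_s=\alpha q_s+\gamma^2 q_s^2/2$ and is $\leq d_s/\alpha$). The paper's route buys an exact identity with no scale-grid or optimization and handles the transition at $a_0=1/\alpha$ automatically. Two small points to tidy up in your version: (i) the lower bound on the integral requires a lower bound on the Lebesgue measure of the single optimal annular layer, i.e.\ $r^{a^*d_s+o(1)}-r^{(a^*+\Delta)d_s+o(1)}\gtrsim r^{a^*d_s+o(1)}$, which is fine for fixed $\Delta$ when $d_s>0$ but needs a separate (easy) remark when $d_s=0$; (ii) the sum over scales should be explicitly truncated to $a$ in a fixed compact interval, with the regimes $a\to 0$ (where $\phi(a)\to\infty$) and $a$ large (where $r^{ad_s}\to 0$, or the layer is empty since $X$ is compactly contained in $\D$) discarded by crude bounds.
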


Notice that if we take $\tilde d = 2 - d_s$ and $\tilde q = 1 - q_s$, then we obtain 
$$\tilde d = \zeta(\tilde q),$$
where $\zeta(\cdot)$ is exactly the same exponent that appeared in Section \ref{sec:scaling}, where we calculated moments of GMC mass of balls of fixed radius:
$$\E \left[M(\Gamma,B_r)^{\tilde q}\right] \asymp r^{\zeta(\tilde q)} = r^{\tilde d}.$$
Moreover, this moment scaling can be turned into a KPZ relation with notions of Minkowski and Hausdorff dimensions on both the Euclidean and the GMC side.

To prove the current KPZ relation at hand, we will do the inverse of the above: we will calculate the moments of the random radius $r$ of the ball $Q_r(z)$, i.e. the radius of the Euclidean ball around $z$ satisfying $M(\Gamma, Q_r(z)) = r$. More precisely, we need to calculate these moments around the typical points of the GMC measure. Indeed, the key lemma is the following:

\begin{lemma}\label{lem:KPZ}
Let $R<1$, $z \in R\D$ and $\tilde Q_r(z) = Q_r(\Gamma + \gamma \G_D(z,\cdot))$. Then
for any positive $q$, $$\E \left[\radS(\tilde Q_r(0))^{\theta(q)}\right] = r^{q+o_r(1)},$$ where $\theta(q) = (2-\gamma^2/2)q + \gamma^2q^2/2$.
\end{lemma}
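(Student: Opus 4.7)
The plan is to reduce the computation of $\E \radS(\tilde Q_r(0))^{\theta(q)}$ to the Laplace transform of a first-passage time for a Brownian motion with drift. The key observation is that under the shifted field $\Gamma + \gamma \G_D(0, \cdot)$, the circle-average at $0$ gains a deterministic drift: combining Section \ref{sec:prelim} with the fact that $\G_D(0, \cdot)$ has circle-average $-\log \eps + O(1)$, this process is distributed (modulo a bounded starting offset) as $\gamma t + B_t$ when parametrized by $t = -\log \eps$, where $B$ is a standard Brownian motion.

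I would first apply Lemma \ref{lem:cmp} to the measure $M^\gamma(\Gamma + \gamma \G_D(0, \cdot), \cdot)$ at scale $R$ with $\tilde r = 2R$. Extracting the deterministic $-\gamma \log R$ contribution from the Green's-function shift, this yields, with $t = -\log R$,
$$\log M^\gamma(\Gamma + \gamma \G_D(0, \cdot), B_R(0)) = -(2 - \gamma^2/2)\, t + \gamma B_t + E_t,$$
where $E_t$ collects a sub-Gaussian contribution from the oscillation $S_{2R,R,0} - h_{B_{2R}}(0)$ (or its $I$-counterpart) together with the log of a rescaled unit-scale GMC mass whose positive and negative moments are controlled by Propositions \ref{prop:posm1} and \ref{prop:nm}. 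In particular $E_t$ has exponential moments bounded uniformly in $t$.

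The defining identity $M(\tilde Q_r(0)) = r$ then becomes
$$(2 - \gamma^2/2)\, t - \gamma B_t = -\log r + E_t,$$
so $t$ is, up to the error $E_t$, the first-passage time $\tau := \inf\{s > 0 : (2-\gamma^2/2)s - \gamma B_s = -\log r\}$. The classical Laplace transform formula for the first passage of a Brownian motion with positive drift gives
$$\E e^{-\lambda \tau} = r^{(\sqrt{(2-\gamma^2/2)^2 + 2\lambda \gamma^2} - (2 - \gamma^2/2))/\gamma^2},$$
and plugging in $\lambda = \theta(q)$ the radicand factors as $((2-\gamma^2/2) + \gamma^2 q)^2$, so the exponent simplifies to exactly $q$. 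Hence $\E e^{-\theta(q)\tau} = r^q$.

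The hard part is transferring this identity from $\tau$ to $t$. I would write $\radS(\tilde Q_r(0))^{\theta(q)} = e^{-\theta(q)\tau} \cdot e^{\theta(q)(\tau - t)}$ and apply H\"older's inequality with conjugate exponents $(p, p')$ close to $(1, \infty)$, so that $\E e^{-p\theta(q)\tau}$ equals $r^{q_p}$ with $q_p \to q$ via the Laplace-transform computation applied at $p\theta(q)$. The factor $\E e^{p'\theta(q)(\tau - t)}$ then has to be bounded uniformly in $r$; this is delicate because $E_t$ depends on $R = e^{-t}$ itself, creating a circular dependence. The remedy I would pursue is a two-sided sandwich: introduce $\tau^\pm_K = \inf\{s : (2-\gamma^2/2) s - \gamma B_s = -\log r \pm K\}$ and argue that $\tau^-_K \le t \le \tau^+_K$ on the event $\{\sup_{s \in [0, \tau^+_K]} |E_s| \le K\}$, whose complement has subpolynomial probability in $r$ by a union bound combined with the exponential moment bounds on $E_t$. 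Letting $p \to 1$ and $K = K(r) \to \infty$ at a suitable rate yields the claimed $r^{q + o_r(1)}$ asymptotic.
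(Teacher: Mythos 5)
Your reduction of the leading-order behaviour to a Brownian first-passage problem is exactly the paper's mechanism: the paper defines the auxiliary mass $\widehat M(s) = (2s)^{2-\gamma^2/2}e^{\gamma h_{B_{2s}}(0)}$, sets $\widehat\radS_r = \sup\{s : \widehat M(s)=r\}$, and computes $\E[\widehat\radS_r^{\theta(q)}]\asymp r^q$ by optional stopping of the exponential martingale $e^{\lambda h_{B_{2s}}(0)}(2s)^{\lambda^2/2}$ — which is precisely the derivation of the first-passage Laplace transform you quote, and your algebra (the radicand factoring as $((2-\gamma^2/2)+\gamma^2 q)^2$) is correct. The divergence, and the gap, is in the transfer step. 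Your plan requires controlling $\sup_{s\in[0,\tau_K^+]}|E_s|$ by a union bound using ``exponential moment bounds on $E_t$''. But $E_t$ contains $\log$ of a unit-scale GMC mass (with the additional singular weight $e^{\gamma^2\G_D(0,\cdot)}$), whose \emph{positive} moments exist only up to order $4/\gamma^2$; so the upper tail of $E_s$ decays only like $e^{-cK}$ with $c<4/\gamma^2$, not sub-Gaussianly. For the lower bound on $\E[\radS(\tilde Q_r(0))^{\theta(q)}]$ — where a large positive $E_s$ makes the radius too small and must be excluded — a union bound over $\asymp\log(1/r)$ scales then gives a bad-event probability of order $\log(1/r)\,e^{-cK}$, and there is no choice of $K$ making this $o(r^q)$ while keeping the sandwich error $e^{O(K)}=r^{o(1)}$ (you would need $K\gtrsim\log(1/r)$, which destroys the $r^{o_r(1)}$ correction). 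The H\"older factor $\E e^{p'\theta(q)(\tau-t)}$ with $p'\to\infty$ runs into the same wall. There is also the unaddressed technical point that a union bound over a continuum of scales needs a modulus-of-continuity estimate for $s\mapsto E_s$.

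The paper avoids the uniform-in-$s$ control entirely by comparing only at a \emph{single} random scale. For the lower bound it writes $\E[\radS^\lambda]\ge\E\bigl[\widehat\radS_r^{\lambda}\,\P(\radS\ge\widehat\radS_r\mid\widehat\radS_r)\bigr]$ and shows the conditional probability is bounded below by a constant, because on $\{s=\widehat\radS_r\}$ the event $\{M(\Gamma+\gamma\G_D(0,\cdot),B_s(0))\le r\}$ contains $\{e^{\tilde S_{2s,s,0}}M_{1/2}\le 1/C\}$, which only needs $M_{1/2}$ to be not too large with positive probability — no tail estimate at all. For the upper bound it shows $\P(\radS\ge\widehat\radS_r^{1-\nu})\le\P(e^{\tilde I}M_{1/2}\le r^{\nu}/c)$, which decays faster than any power by the Gaussian \emph{lower} tail of the GMC mass (Lemma \ref{lem:tail}); only that one-sided, superpolynomially light tail is ever used. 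If you want to salvage your sandwich, you should make it one-sided in the same way: use the lower tail of $M_{1/2}$ (all negative moments exist) for the upper bound on the radius, and replace the uniform upper bound on $E_s$ by the paper's constant-probability argument at the stopping scale $\widehat\radS_r$ for the lower bound.
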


The proposition follows from this lemma and an auxiliary lemma, saying that this radius doesn't depend too much on the geometric location of the chosen point:
\begin{lemma}\label{lem:KPZaux}
Let $R<1$, $z \in R\D$ and $\tilde Q_r(z) = Q_r(\Gamma + \gamma \G_D(z,\cdot))$. Then for all $s \geq 0$, $$\P\left(\radS(\tilde Q_r(z)) \geq s\right) = \P\left(\radS(\tilde Q_r(0)) \geq s\right)^{1+o_r(1)}.$$
\end{lemma}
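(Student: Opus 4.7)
The plan is to transport the problem from $z$ to $0$ via a Möbius automorphism of $\D$, using the conformal invariance of the Green's function together with the change-of-coordinates formula for the GMC. The resulting perturbation near $0$ will be bounded by constants depending only on $R$, which become subleading on the logarithmic scale as $r \to 0$.

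First, I would pick a Möbius map $\phi : \D \to \D$ with $\phi(0) = z$. Since $z \in R\D$ with $R < 1$, the quantity $|\phi'(0)| = 1 - |z|^2$ is bounded above and below by constants depending only on $R$, and by the Koebe distortion estimate the same is true for $|\phi'(w)|$ with $w \in \phi^{-1}(B_s(z))$, uniformly for $s$ small. Moreover $\phi^{-1}(B_s(z))$ is itself sandwiched between two Euclidean disks centred at $0$ of comparable radii $\asymp s$, again with ratios depending only on $R$.

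Next, by the conformal invariance of the Green's function, $\G_\D(z, \phi(\cdot)) = \G_\D(0, \cdot)$, so the shift $\gamma \G_\D(z, \cdot)$ pulls back to $\gamma \G_\D(0, \cdot)$. Combined with the change-of-coordinates Lemma \ref{lem:coc}, this yields the distributional identity
$$M^\gamma\bigl(\Gamma + \gamma \G_\D(z, \cdot),\, B_s(z)\bigr) \;\overset{d}{=}\; M^\gamma\bigl(\tilde \Gamma + \gamma \G_\D(0, \cdot) + \tfrac{2}{\gamma}\log|\phi'|,\, \phi^{-1}(B_s(z))\bigr),$$
where $\tilde \Gamma$ is a GFF on $\D$. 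The extra drift $\tfrac{2}{\gamma}\log|\phi'|$ multiplies the GMC density by $|\phi'|^2$, which on the relevant region is bounded above and below by constants depending only on $R$. Together with the sandwiching of $\phi^{-1}(B_s(z))$ by concentric disks around $0$, this produces a two-sided comparison
$$\P\bigl(\radS(\tilde Q_{r/c}(0)) \geq c s\bigr) \;\leq\; \P\bigl(\radS(\tilde Q_r(z)) \geq s\bigr) \;\leq\; \P\bigl(\radS(\tilde Q_{c r}(0)) \geq s/c\bigr),$$
for some constant $c > 1$ depending only on $R$.

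Finally, to pass from this sandwich to the claimed asymptotic equality of exponents, I would use that by Markov's inequality and Lemma \ref{lem:KPZ} the probability $\P(\radS(\tilde Q_r(0)) \geq s)$ decays polynomially in $r$, so its logarithm is of order $\log(1/r) \to \infty$. Multiplying $r$ and $s$ by the bounded constant $c$ only shifts this logarithm by an additive $O(1)$ term, which is negligible on the scale $\log(1/r)$, and hence gets absorbed into the $(1 + o_r(1))$ factor in the exponent. The main obstacle is making this last step precise: one needs a matching polynomial lower bound on $\P(\radS(\tilde Q_r(0)) \geq s)$ to exploit both sides of the sandwich, which is not provided directly by the moment bound of Lemma \ref{lem:KPZ} and has to be extracted from a slight refinement of the same scaling argument.
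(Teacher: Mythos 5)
Your reduction of the problem to the origin via a M\"obius map is a genuinely different route from the paper's, and the first half of it is sound: conformal invariance of the Green's function does give $\G_\D(z,\phi(\cdot))=\G_\D(0,\cdot)$, Lemma \ref{lem:coc} does convert the pushforward into a GMC with the extra drift $\frac{2}{\gamma}\log|\phi'|$, and for $z\in R\D$ both $|\phi'|$ on $\phi^{-1}(B_s(z))$ and the eccentricity of $\phi^{-1}(B_s(z))$ as a near-disk around $0$ are controlled by constants depending only on $R$. This legitimately yields your two-sided sandwich
$$\P\bigl(\radS(\tilde Q_{r/c}(0)) \geq cs\bigr) \;\leq\; \P\bigl(\radS(\tilde Q_r(z)) \geq s\bigr) \;\leq\; \P\bigl(\radS(\tilde Q_{cr}(0)) \geq s/c\bigr).$$
It is an appealing reduction because it exploits the exact conformal invariance of the GFF rather than the Markov property.

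However, the step you flag as "the main obstacle" is not a technicality to be cleaned up at the end --- it is where essentially all of the analytic content of the lemma lives, and as written it is a gap. You need, uniformly in $s$, that perturbing $r$ and $s$ by $R$-dependent constants changes $\log\P(\radS(\tilde Q_r(0))\geq s)$ only by a relative $o_r(1)$; this is a statement of exactly the same type as the lemma itself (two probabilities agreeing up to a $1+o_r(1)$ power), and Markov's inequality applied to Lemma \ref{lem:KPZ} gives only a one-sided bound. The paper avoids this by a different localization: using Lemma \ref{lem:cmp} with $\tilde r = 2s$, the mass $M(\Gamma+\gamma\G_D(z,\cdot),B_s(z))$ is bounded below by $c\,s^{\gamma^2/2}e^{\gamma I_{2s,s,z}}$ times a quantity whose law does not depend on $z$ and is independent of $I_{2s,s,z}$; writing $I_{2s,s,z}=h_{B_{2s}}(z)+\tilde I_{2s,s,z}$ with $\tilde I$ a sub-Gaussian of bounded mean and variance, everything reduces to comparing the lower tails of the single Gaussians $h_{B_{2s}}(z)$ and $h_{B_{2s}}(0)$, whose variances are both $-\log s + O_R(1)$ --- and for Gaussians the $(1+o_s(1))$-power tail comparison under an $O_R(1)$ shift of the variance and of the threshold is an explicit computation. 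To complete your route you would have to carry out the analogous two-sided tail comparison at $0$ under constant perturbations of $(r,s)$; the ingredients are available (the bounds \eqref{eq:lbKPZ}--\eqref{eq:ubKPZ} in the proof of Lemma \ref{lem:KPZ} reduce $\radS(\tilde Q_r(0))$ to the hitting time $\widehat\radS_r$ of a Brownian motion, and for $z=0$ they do not rely on the present lemma, so there is no circularity), but this is precisely the Gaussian computation the paper performs, now relocated rather than avoided. So your approach can be made to work, but the proposal as it stands defers the decisive estimate.
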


Let us first see how this proves Proposition \ref{prop:KPZ} and then prove the lemmas. Notice that the GMC scaling dimensions can be written using the rooted measure from section \ref{sec:RM}. Indeed, by using the two descriptions of the rooted measure, i.e. equation \eqref{eq:char}, we can write
$$\E M(z: Q_r(z) \cap X \neq \emptyset) = \E \int_D \I{Q_r(z) \cap X \neq \emptyset}M(\Gamma,dz) = \E \int_D \I{\tilde Q_r(z) \cap X \neq \emptyset}dz,$$
where as above we denote $\tilde Q_r(z) = Q_r(\Gamma + \gamma \G_D(z,\cdot))$. From Fubini's theorem, we can write this further as $$\int_D \P(\tilde Q_r(z) \cap X \neq \emptyset)dz.$$
By Lemma \ref{lem:KPZaux}, we have
$$\P(\tilde Q_r(z) \cap X \neq \emptyset) = \P\left(\radS(\tilde Q_r(z)) \geq \dist (z,X)\right) = \P\left(\radS(\tilde Q_r(0)) \geq \dist (z,X)\right)^{1+o_r(1)}.$$
Thus by Jensen's inequality and the fact that probabilities are in $[0,1]$, we have
$$\E M(z: Q_r(z) \cap X \neq \emptyset) = \left[\int_D \P\left(\radS(\tilde Q_r(0)) \geq \dist (z,X)\right)dz\right]^{1+o_r(1)}.$$
Now write $d\P_r(dr_q)$ to be the law of $r_q = \radS (\tilde Q_r(0))$. Using Fubini's theorem, we can rewrite 
$$\int_D dz \int_{\R^+} \I{\dist(z,X) \leq r_q)}d\P_r(dr_q) = \int_{\R^+} d\P_r(dr_q) \int_D \I{\dist(z,X) \leq r_q}dz$$
But $$\int_D \I{\dist(z,X) \leq r_q}dz = \Leb(z: B_{r_q}(z) \cap X \neq \emptyset)$$ and by the assumption on the Euclidean dimension
$$\Leb(z: B_{r_q}(z) \cap X \neq \emptyset)) = {r_q}^{d_s+o_1(r_q)}$$
Hence 
$$\E M(z: Q_r(z) \cap X \neq \emptyset) = \left[\E(\radS (\tilde Q_r(0))^{d_s+o_1(r)})\right]^{1+o_r(1)}$$
and the proposition follows from the moment calculation in Lemma \ref{lem:KPZ}.

\begin{proof}[Proof of Lemma \ref{lem:KPZaux}] The basic tool is again a comparison in the style of Lemma \ref{lem:cmp}. To use it, first notice that as $M(\Gamma + \gamma \G_D(z,\cdot), B_r(z))$ is almost surely increasing in $r$,  
$$\P\left(\radS(\tilde Q_r(z)) \geq s\right) = \P\left(M(\Gamma + \gamma \G_D(z,\cdot), B_s(z)) \leq r\right).$$
Now from (a slight generalization of) Lemma \ref{lem:cmp} with $\eps = 0$ and $r = 0.5\tilde r = s$ smaller than $(1-R)/10$, we have (with the notations of that lemma)
$$c s^{\frac{\gamma^2}{2}}e^{\gamma I_{2s,s,z} }M\left(\Gamma^{B_{2s}(z)} + \gamma \G_D(z,\cdot), B_s(z)\right) \leq M\left(\Gamma + \gamma \G_D(z,\cdot), B_s(z)\right)$$
Thus 
$$\P\left(M(\Gamma + \gamma \G_D(z,\cdot), B_s(z)) \leq r\right) \leq \P\left(c s^{\frac{\gamma^2}{2}}e^{\gamma I_{2s,s,z} }M\left(\Gamma^{B_{2s}(z)} + \gamma \G_D(z,\cdot), B_s(z)\right) \leq r\right)$$
Now the law of $\Gamma^{B_{2s}(z)}$ does not depend on $z$, is independent of $I_{2s,s,z}$, and $c_R \G_D(0,\cdot) \leq \G_D(z,\cdot)$ for $z \in R\D$. Thus we further bound by
$$\P\left(c' s^{\frac{\gamma^2}{2}}e^{\gamma I_{2s,s,z} }M\left(\Gamma^{B_{2s}(0)} + \gamma \G_D(0,\cdot), B_s(0)\right) \leq r\right).$$

Finally, write $I_{2s,s,z} = h_{B_{2s}}(z) + \tilde I_{2s,s,z}$ as in the proof of Lemma \ref{lem:cor}. For $z \in R \D$, $h_{B_{2s}}(z)$ are Gaussians of variance $-\log s + v_z$ with $v_z$ bounded over $z \in R \D$, and $\tilde I_{2s,s,z}$ are sub-Gaussians of bounded variance and mean. Thus as $s \to 0$, for any $C$ in some fixed $[c, 1/c]$, we have
$$\P(e^{\gamma I_{2s,s,z} } \leq r_1) \leq \P(e^{\gamma I_{2s,s,0}} \leq Cr_1)^{1-o_s(1)}.$$
The other inequality follows similarly.
\end{proof}

Finally, let us prove the moment estimate:

\begin{proof}[Proof of Lemma \ref{lem:KPZ}]

From the scale-invariance of $\Gamma$ and the fact that $\G_D(0,z) =  - \log |z|$, we have that $M\left(\Gamma^{B_{2s}(0)} + \gamma \G_D(0,\cdot), B_s(0)\right)$ has the law of $(2s)^{2 - \gamma^2} M\left(\Gamma + \gamma \G_D(0,\cdot), B_{1/2}(0z)\right)$. Denoting $M_{1/2} :=M\left(\Gamma + \gamma \G_D(0,\cdot), B_{1/2}(0z)\right)$ we deduce from Lemma \ref{lem:cmp} as above that
\begin{equation}\label{eq:lbKPZ}
M(\Gamma + \gamma \G_D(0,\cdot), B_s(0)) \text{ stochastically dominates } cM_{1/2}\left[(2s)^{2-\gamma^2/2}e^{\gamma h_{B_{2s}}(0)}\right]e^{\tilde I_{2s,s,0}},
\end{equation}
and 
\begin{equation}\label{eq:ubKPZ}
CM_{1/2}\left[(2s)^{2-\gamma^2/2}e^{\gamma h_{B_{2s}}(0)}\right]e^{\tilde S_{2s,s,0}} \text{ stochastically dominates } M(\Gamma + \gamma \G_D(0,\cdot), B_s(0)).
\end{equation}
Here $\tilde I_{2s,s,0}$ is as above, and $\tilde S_{2s,s,0}$ is defined similarly - both are sub-Gaussians of bounded variance of mean that can be bounded independently of $s$. 

Now, in the bounds above only the term in the square brackets depends on $s$ and defines the leading behaviour as $s \to 0$. Following this philosophy, define an auxiliary mass $\widehat M(s) = (2s)^{2-\gamma^2/2}e^{\gamma h_{B_{2s}}(z)}$ and let $\widehat \radS_r := \sup_s \{\widehat M(s) = r\}$. The moment estimate now follows in two steps. We will show that:
\begin{enumerate}
\item $\E \left[\widehat \radS_r^{\theta(q)}\right] \asymp r^q, $ where $\theta(q)$ is as in the statement, and
\item $\E \left[\radS(\tilde Q_r(z))^\lambda\right] =\left(\E \left[\hat \radS_r^\lambda\right]\right)^{1+o_s(1)}$ for all $\lambda \geq 0$ and $z \in R\D$.
\end{enumerate}

The first step just follows from a simple martingale calculation: indeed as $h_{B_{2s}}(0) = \Gamma_{2s}(0)$ and $\Gamma_\eps(0)$ is a Brownian motion when parametrized using $-\log \eps$, then for any $\lambda$, the process $e^{\lambda h_{B_{2s}}(0)}(2s)^{\lambda^2/2}$ is a martingale of mean $1$. When we stop it at $\widehat \radS_r$, on the one hand we have $$(2\widehat\radS_r)^{2-\gamma^2/2}e^{\gamma h_{B_{2\widehat\radS_r}}(0)} = r.$$ But from the optional stopping theorem we also have $\E e^{\lambda h_{B_{2\widehat\radS_r}}(0)}(2\widehat\radS_r)^{\lambda^2/2} = 1.$
Thus $$\E (2\widehat\radS_r)^{\lambda^2/2 + \lambda(\gamma/2-2/\gamma)} = r^{-\lambda/\gamma},$$
and taking $q = -\lambda / \gamma$ gives the claim. 

Using Lemma \ref{lem:KPZaux}, it suffices to show the second step for $z = 0$. 
Let us first argue that $\E \left[\radS(\tilde Q_r(0))^\lambda\right] \geq c \E \left[\widehat \radS_r^\lambda\right]$.
Writing
$\E \left[\radS(\tilde Q_r(0))^\lambda\right] = \E \left[\E \left(\radS(\tilde Q_r(0))^\lambda | \widehat\radS_r\right)\right]$
and using
$$\radS(\tilde Q_r(0))^\lambda \geq \radS(\tilde Q_r(0))^\lambda\I{\widehat \radS_r \leq \radS(\tilde Q_r(0))} \geq \widehat\radS_r^\lambda\I{\widehat \radS_r \leq \radS(\tilde Q_r(0))}$$
we see that 
$$\E \left[\radS(\tilde Q_r(0))^\lambda\right] \geq \E\left[\widehat \radS_r^\lambda \P\left(\radS(\tilde Q_r(0)) \geq \widehat\radS_r | \widehat\radS\right)\right].$$
From $\{\radS(\tilde Q_r(0)) \geq s\} = \{M(\Gamma + \gamma \G_D(0,\cdot), B_s(0)) \leq r\},$
we see that \eqref{eq:ubKPZ} implies
$$\P\left(\radS(\tilde Q_r(0)) \geq s | \widehat\radS_r =s)\right) \geq \P(e^{\tilde S_{2s,s,0}}M_{1/2} \leq 1/C).$$
Now, $M_{1/2}$ is a.s. positive and $\tilde S_{2s,s,0}$ is a sub-Gaussian of bounded variance of mean. Hence this probability is larger than some $c$ uniformly over all $s > 0$ and $\E \left[\radS(\tilde Q_r(0))^\lambda\right] \geq c \E \left[\widehat \radS_r^\lambda\right]$ follows.

It remains to show the other bound. From $\eqref{eq:lbKPZ}$ we deduce that for any sufficiently small $\nu > 0$ it holds that 
$$\P\left(\radS(\tilde Q_r(0)) \geq \widehat \radS_r^{1-\nu}\right) \leq \P\left(e^{\tilde I_{2\widehat \radS_r,\widehat \radS_r,0}}M_{1/2} \leq r^{\nu}/c\right).$$
But from Lemma \ref{lem:tail} and the fact that $M_{1/2} \geq M\left(\Gamma, B_{1/2}(z)\right)$ we see that this probability decays faster than any power. Thus the upper bound also follows. 
\end{proof}

\subsection*{References to the literature}

In the physics literature, the KPZ relation goes back to \cite{KPZ}. The idea to reinterpret this relation as a relation between fractal dimensions goes back to \cite{DS}, giving also the first mathematical derivation of the KPZ relation. See also \cite{Gar} for an introduction to this version and to heuristic in terms of random planar maps. 

Another version of the KPZ relation followed shortly thereafter \cite{RVKPZ}, and it has the advantage of being an almost sure result w.r.t the measure. It uses rather the notion of Hausdorff dimension and is based on the moment calculation of the GMC mass. By now there is a whole multitude of KPZ relations, depending on the notion of dimension used on the Euclidean and the GMC measure side, see e.g. \cite{RVrev} for an overview.

\bibliographystyle{alpha}
\bibliography{bib_GMC}

\end{document}